\newtheorem{theorem}{Theorem}[section]
\crefname{theorem}{theorem}{theorems}
\Crefname{Theorem}{Theorem}{Definitions}
\newtheorem{definition}{Definition}[section]
\crefname{definition}{definition}{definitions}
\Crefname{Definition}{Definition}{Definitions}
\newtheorem{corollary}[theorem]{Corollary}
\crefname{corollary}{corollary}{corollaries}
\Crefname{Corollary}{Corollary}{Corollaries}
\newtheorem{proposition}[theorem]{Proposition}
\crefname{proposition}{proposition}{propositions}
\Crefname{Proposition}{Proposition}{Propositions}
\newtheorem{lemma}[theorem]{Lemma}
\crefname{lemma}{lemma}{lemmas}
\Crefname{Lemma}{Lemma}{Lemmas}
\newtheorem{example}{Example}[section]
\crefname{example}{example}{examples}
\Crefname{example}{Example}{Examples}
\newtheorem{remark}{Remark}[section]
\crefname{remark}{remark}{remarks}
\Crefname{remark}{Remark}{Remarks}
\crefname{assumption}{assumption}{assumptions}
\Crefname{assumption}{Assumption}{Assumptions}
\crefname{enumi}{point}{point}
\Crefname{enumi}{Point}{Point}
\theoremstyle{remark}
\def\lcag{\mathbbm{G}}%\mathrm{T}}
\def\adjoint{\mathsf{H}}
\def\povm{p.o.v.m.}
\def\caos{o.s.}
\def\cagos{g.o.s.}
\def\wot{w.o.t.}
\def\lca{l.c.a.}
\newcommand{\fundef}[1]{{
\begin{array}{lcl} #1 \end{array} }} % for functions definitions
\newcommand{\norm}[1]{{
    \left\| #1 \right\|}} % norm
\newcommand{\iseg}[1]{{\left\llbracket #1 \right\rrbracket}} % integer segment
\newcommandx\cspanarg[2][1=]{\ensuremath{\overline{\mathrm{Span}}^{#1}\left(#2\right)}}
\newcommand{\gramian}[2]{{\left[ #1, #2 \right]}} % closed segment [ ]
\newcommand{\rank}{{\rm rank}}
\newcommand{\spec}{{\rm \spec}}
\newcommand{\range}{\mathrm{Im}}
\newcommand{\domain}{\mathcal{D}}
\newcommand{\gramianisometricallyembedded}{\; \mathrel{\substack{
      \textstyle\subseteq \\ \textstyle\sim}} \;}
\newcommand{\gramianisomorphic}{\; \mathrel{\substack{
      \textstyle\sim\\[-0.5ex]\textstyle=}} \;}
\newcommand{\quotient}[2]{{\raisebox{.2em}{$#1$}\left/\raisebox{-.2em}{$#2$}\right.}}
\newcommand{\tnorm}[1]{{\left\vert\kern-0.25ex\left\vert\kern-0.25ex\left\vert #1 
    \right\vert\kern-0.25ex\right\vert\kern-0.25ex\right\vert}} % triple norm
\newcommand{\filterspecrep}[1]{\hat{F}_{#1}}
\newcommand{\filterprocess}[1]{F_{#1}}
\newcommand{\specreptransfer}[1]{\hat{\cS}_{#1}}
\newcommand{\processtransfer}[1]{\cS_{#1}}
\newcommand{\essuparg}[1]{\mathop{#1\text{-}\mathrm{essup}}}
\def\unitcircle{\mathbb{U}}
\def\rset{\mathbb{R}}
\def\cset{\mathbb{C}}
\def\zset{\mathbb{Z}}
\def\nset{\mathbb{N}}
\def\rmi{\mathrm{i}}
\def\rme{\mathrm{e}}
\def\rmd{\mathrm{d}}
\newcommandx{\aslim}[1]{\ensuremath{\stackrel{#1\text{a.s.}}{\longrightarrow}}}  % david removed dash - before \text
\newcommand{\1}{\mathbbm{1}}
\newcommand{\indi}[1]{\mathbbm{1}_{\left\{#1\right\}}}
\def\bX{\mathbf{X}}
\def\eqsp{\;}
\newcommand{\aop}{\mathrm{Q}}
\newcommand{\bop}{\mathrm{T}}
\newcommand{\fop}{\mathrm{F}}
\def\calA{\mathcal{A}}
\def\cF{\mathcal{F}}
\def\cE{\mathcal{E}}
\def\cH{\mathcal{H}}
\def\cG{\mathcal{G}}
\def\cI{\mathcal{I}}
\def\borel{\mathcal{B}}
\newcommand{\pscal}[2]{\left\langle #1, #2 \right\rangle}
\def\b0{{\bf 0}}
\def\bX{{\bf X}}
\def\cA{\mathcal{A}}
\def\cC{\mathcal{C}}
\def\cE{\mathcal{E}}
\def\cF{\mathcal{F}}
\def\cG{\mathcal{G}}
\def\cH{\mathcal{H}}
\def\cI{\mathcal{I}}
\def\cK{\mathcal{K}}
\def\cL{\mathcal{L}}
\def\cM{\mathcal{M}}
\def\cO{\mathcal{O}}
\def\cS{\mathcal{S}}
\def\tr{\mathrm{Tr}}
\newcommand\lspan[1]{\mathrm{Span}\left(#1\right)}
\newcommand\cspan[1]{\overline{\mathrm{Span}}\left(#1\right)}
\newcommand\algo[1]%
\newcommand{\chunk}[4][]%
{\ifthenelse{\equal{#1}{}}{\ensuremath{{#2}_{#3:#4}}}{\ensuremath{#2^#1}_{#3:#4}}
}
\def\esp{\mathbb{E}}
\newcommandx\prob[2][1=,2=]{\ensuremath{{\mathbb P}_{#1}^{#2}}}
\newcommand{\PP}[1][]{\ifthenelse{\equal{#1}{}}{\ensuremath{\mathbb{P}}}{\ensuremath{\mathbb{P}\left( #1 \right)}}}
\newcommandx{\PParg}[2][1=]{\PP_{#1}\left(#2\right)}
\newcommand{\PE}[1][]{\ifthenelse{\equal{#1}{}}{\esp}{\ensuremath{{\mathbb E}\left[ #1 \right]}}}
\newcommandx{\PEarg}[2][1=]{\PE_{#1}\left[#2\right]}
\newcommand{\PVar}{\ensuremath{\operatorname{Var}}}
\newcommandx\var[2][1=]{\ensuremath{\PVar_{#1}\left( #2\right)}}
\newcommandx\cvar[3][1=]{\ensuremath{\PVar_{#1}\left( \left. #2 \right| #3 \right)}}
\newcommand{\PCov}{\ensuremath{\operatorname{Cov}}}
\newcommand\Cov[2]{\PCov\left(#1,#2\right)}
\newcommandx\cov[3][1=]{\ensuremath{\mathrm{Cov}_{#1}\left( #2,#3 \right)}}
\newcommandx\ccov[3][1=]{\ensuremath{\mathrm{Cov}_{#1}\left( \left. #2 \right| #3 \right)}}
\newcommand{\CPP}[3][]
{\ifthenelse{\equal{#1}{}}{\PP\left[\left. #2 \, \right| #3 \right]}{\mathbb{P}_{#1}\left(\left. #2 \, \right | #3 \right)}}
\newcommand{\CPE}[3][]
{\ifthenelse{\equal{#1}{}}{\PE\left[ \left. #2 \right| #3
    \right]}{\mathbb{E}_{#1} \left[ \left. #2 \right| #3 \right]}}
\newcommandx\cprob[4][1=,2=]{\ensuremath{\PP_{#1}^{#2}\left[ \left. #3 \right|
      #4 \right]}}
\newcommandx\HMCP[2][1=]{
\ifthenelse{\equal{#1}{}}{\PP_{#2}}{\PP_{#1,#2}}
}
\newcommandx\HMCE[2][1=]{
\ifthenelse{\equal{#1}{}}{\PE_{#2}}{\PE_{#1,#2}}
}
\newcommandx\HMCEarg[3][1=]{
\ifthenelse{\equal{#1}{}}{\PE_{#2}\left[#3\right]}{\PE_{#1,#2}\left[#3\right]}
}
\def\loikhi2{\mathbf{\chi^2}}
\newcommandx{\proj}[2]{\ensuremath{\operatorname{proj}\left( \left. #1\right|#2\right)}}
\newcommand{\g}{\ensuremath{g}}
\newcommand{\URoot}{\ensuremath{R}}
\newcommand{\UCov}[1][]%
{%
\ifthenelse{\equal{#1}{}}{\URoot \URoot^t}{\URoot_{#1} \URoot^t_{#1}}%
}
\newcommand{\VRoot}{\ensuremath{S}}
\newcommand{\VCov}[1][]%
{%
\ifthenelse{\equal{#1}{}}{\VRoot \VRoot^t}{\VRoot_{#1} \VRoot^t_{#1}}%
}
\newcommand{\LDX}[2]{\ensuremath{L}}
\newcommand{\postdx}[3][]%
{%
\ifthenelse{\equal{#1}{}}{\ensuremath{\psi_{#2|#3}}}{\ensuremath{\psi_{#1,#2|#3}}}%
}
\newcommand{\epostdx}[3][]%
{%
\ifthenelse{\equal{#1}{}}{\ensuremath{\hat{\psi}_{#2|#3}}}{\ensuremath{\hat{\psi}_{#1,#2|#3}}}%
}
\newcommandx{\predx}[3][1=\bX]{#1_{#2|#3}}   % prediction of boldface X as default
\newcommand{\predpx}[3][]%
{%
\ifthenelse{\equal{#1}{}}{\ensuremath{\varphi_{#2|#3}}}{\ensuremath{\varphi_{#1,#2|#3}}}%
}
\newcommandx\cesp[4][1=,2=]{\ensuremath{{\mathbb E}_{#1}^{#2}\left[ \left. #3 \right| #4 \right]}}
\newcommand{\filt}[2][]%
{%
\ifthenelse{\equal{#1}{}}{\ensuremath{\phi_{#2}}}{\ensuremath{\phi_{#1,#2}}}%
}
\newcommand{\pred}[3][]%
{%
\ifthenelse{\equal{#1}{}}{\ensuremath{\phi_{#2|#3}}}{\ensuremath{\phi_{#1,#2|#3}}}%
}
\newcommand{\post}[3][]%
{%
\ifthenelse{\equal{#1}{}}{\ensuremath{\phi_{#2|#3}}}{\ensuremath{\phi_{#1,#2|#3}}}%
}
\newcommand{\logl}[2][]%
{%
\ifthenelse{\equal{#1}{}}{\ensuremath{\ell_{#2}}}{\ensuremath{\ell_{#1,#2}}}%
}
\newcommand{\lhood}[2][]%
{%
\ifthenelse{\equal{#1}{}}{\ensuremath{\mathrm{L}_{#2}}}{\ensuremath{\mathrm{L}_{#1,#2}}}%
}
\newcommand{\cc}[2][]%
{%
\ifthenelse{\equal{#1}{}}{\ensuremath{c_{#2}}}{\ensuremath{c_{#1,#2}}}%
}
\newcommand{\forvar}[2][]%
{%
\ifthenelse{\equal{#1}{}}{\ensuremath{\alpha_{#2}}}{\ensuremath{\alpha_{#1,#2}}}%
}
\newcommand{\nforvar}[2][]%
{%
\ifthenelse{\equal{#1}{}}{\ensuremath{\bar{\alpha}_{#2}}}{\ensuremath{\bar{\alpha}_{#1,#2}}}%
}
\newcommand{\BK}[2][]%
{%
\ifthenelse{\equal{#1}{}}{\ensuremath{\mathrm{\mathrm{B}}_{#2}}}{\ensuremath{\mathrm{B}_{#1,#2}}}%
}
\newcommand{\filtfunc}[2][]%
{%
\ifthenelse{\equal{#1}{}}{\ensuremath{\tau_{#2}}}{\ensuremath{\tau_{#1,#2}}}%
}
\newcommand{\filtmean}[2][]
{\ifthenelse{\equal{#1}{}}{{\ensuremath{\hat{X}_{#2|#2}}}}{\ensuremath{\hat{X}_{#1,#2|#2}}}
}
\newcommand{\filtcov}[2][]
{\ifthenelse{\equal{#1}{}}{\ensuremath{\Sigma_{#2|#2}}}{\ensuremath{\Sigma_{#1,#2|#2}}}}
\newcommand{\postmean}[3][]
{\ifthenelse{\equal{#1}{}}{\ensuremath{\hat{X}_{#2|#3}}}{\ensuremath{\hat{X}_{#1,#2|#3}}}
}
\newcommand{\postcov}[3][]
{\ifthenelse{\equal{#1}{}}{\ensuremath{\Sigma_{#2|#3}}}{\ensuremath{\Sigma_{#1,#2|#3}}}}
\newcommandx{\QEM}[4][1=,4=]{\ensuremath{\mathcal{Q}_{#1}(#4;#2 \, ; #3)}}
\newcommand{\argmax}{\ensuremath{\mathop{\mathrm{argmax}}}}
\def\tore{\mathbb{T}}
\def\btore{\mathcal{B}(\tore)}
\newcommandx\sequence[3][2=t,3=\zset]{\ensuremath{\left(#1_{#2}\right)_{#2 \in #3 }}}
\newcommandx\dsequence[4][3=t,4=\zset]{\ensuremath{\left( (#1_{#3}, #2_{#3})\right)_{#3 \in #4}}}
\newcommandx{\sequencen}[2][2=n\in\nset]{\ensuremath{\left(#1\right)_{#2}}}
\def\bpm{\left[\begin{matrix}}
\def\epm{\end{matrix}\right]}
\def\bma{\begin{matrix}}
\def\ema{\end{matrix}}
\newcommand{\be}{\begin{equation}}     % begin eqn
\newcommand{\ee}{\end{equation}}        % end eqn
\newcommand{\eg}{e.g.}
\newcommandx\lnorm[3][1=]{\left\lVert #2 \right\rVert^{#1}_{#3}}
\newcommand{\Id}{\mathrm{Id}}
\newcommandx\supnorm[2][1=]{| #2 |^{#1}_\infty}
\newcommandx\ball[3][1=]{\mathrm{B}_{#1} (#2,#3)}
\newcommandx{\prohosym}[1][1=]{{\boldsymbol\rho}_{#1}}
\newcommandx{\proho}[3][1=]{\prohosym{#1}\left(#2,#3\right)}
\newcommandx{\pp}[1][1=\mu]{\ensuremath{#1\-\mathrm{a.e.}}}
\renewcommand{\-}{\mbox{-}}
\newcommandx{\as}[1][1=\PP]{\ensuremath{#1\-\mathrm{a.s.}}}
\newcommandx{\oscnorm}[3][1=,3=]{\operatorname{osc}^{#1}_{#3}\left(#2\right)}
\newcommandx{\tvdist}[3][1=]{\ensuremath{d^{#1}_{\mathrm{TV}}}(#2,#3)}
\newcommandx{\VnormFunc}[3][1=]{\ensuremath{\left|#2\right|_{\mathrm{#3}}^{#1}}}
\newcommand{\continuousfunctionset}[1]{\mathrm{C}_b(#1)}
\newcommand{\lipschitzfunctionset}[1]{\mathrm{Lip}(#1)}
\newcommand{\boundedlipschitzfunctionset}[1]{\mathrm{Lip}_b(#1)}
\newcommand{\simplemeasfunctionsetarg}[1]{\mathbb{F}_s\left(#1\right)}
\newcommand{\weakmeasfunctionsetarg}[1]{\mathbb{F}_{\cO}\left(#1\right)}
\newcommandx\functionsetarg[2][1=]{
\ifthenelse{\equal{#1}{c}}{\continuousfunctionset{\mathsf{#2}}}%fonctions continues
{\ifthenelse{\equal{#1}{bc}}{\mathrm{C}_b(#2)}%fonctions continues born\'{e}es
{\ifthenelse{\equal{#1}{u}}{\mathrm{U}(#2)}%fonctions uniform\'{e}ment continues
{\ifthenelse{\equal{#1}{bu}}{\mathrm{U}_b(#2)}%fonctions uniform\'{e}ment continues born\'{e}es
{\ifthenelse{\equal{#1}{l}}{\lipschitzfunctionset{#2}}%fonctions lipschitz
{\ifthenelse{\equal{#1}{bl}}{\boundedlipschitzfunctionset{#2}}%fonctions lipschitz born\'{e}es
{\mathbb{F}_{#1}(#2)}%le reste
}}}}}}
\newcommandx\functionsetspec[1][1=]{
\ifthenelse{\equal{#1}{c}}{\mathrm{C}}%fonctions continues
{\ifthenelse{\equal{#1}{bc}}{\mathrm{C}_b}%fonctions continues born\'{e}es
{\ifthenelse{\equal{#1}{u}}{\mathrm{U}}%fonctions uniform\'{e}ment continues
{\ifthenelse{\equal{#1}{bu}}{\mathrm{U}_b}%fonctions uniform\'{e}ment continues born\'{e}es
{\ifthenelse{\equal{#1}{l}}{\mathrm{Lip}}%fonctions lipschitz
{\ifthenelse{\equal{#1}{bl}}{\mathrm{Lip}_b}%fonctions lipschitz born\'{e}es
{\mathbb{F}_{#1}}%le reste
}}}}}}
\newcommandx{\taboo}[3][1=,3=]{\left(\leftidx{_#1}{#2}{}\right){^{#3}}}
\newcommandx\vectornorm[2][1=]{\left| #2 \right|^{#1}}  %%%% norme de vecteurs pas fonctions
\newcommand{\ensemble}[2]{\left\{#1\,:\eqsp #2\right\}}
\newcommand{\set}[2]{\ensemble{#1}{#2}}
\newcommandx{\plim}[1]{\ensuremath{\stackrel{#1\-\text{prob}}{\longrightarrow}}}
\newcommandx{\dlim}[1]{\ensuremath{\stackrel{#1}{\Longrightarrow}}}
\newcommandx\measureset[3][1=\mathrm{s},3=]{\mathbb{M}^{#3}_{#1}(#2)}
\newcommandx\measuresetmetric[2][1=1]{\mathbb{M}_{#1}(\mathcal{B}(\mathsf{#2}))}  %%% espace de mesures sur un metric muni de sa tribu borelienne
\newcommandx\measuresetspec[1][1=\mathrm{s}]{\mathbb{M}_{#1}}
\newcommand{\abs}[1]{\left\vert #1 \right\vert}
\newcommandx\canonicalkernel[1][1=P]{\mathbb{K}_{#1}}
\numberwithin{equation}{section}
\title{Weakly stationary stochastic processes valued in a separable Hilbert
  space: Gramian-Cramér representations and applications}
\author{Amaury Durand\,\footnote{LTCI, Telecom Paris, Institut Polytechnique de
    Paris.}\;\,\footnote{EDF R\&D, TREE, E36, Lab Les Renardieres, Ecuelles, 77818 Moret sur Loing, France. } \and François Roueff\,$^*$
   \addtocounter{footnote}{-3}
   \footnote{Math Subject Classification. Primary: 60G12; Secondary: 47A56, 46G10} \addtocounter{footnote}{-1}
   \footnote{Keywords. Spectral representation of random processes. Isometries on Hilbert modules. Functional time series.}  \addtocounter{footnote}{-1}
 }
 \date{}
\begin{document}
\sloppy
\maketitle

\abstract{The spectral theory for weakly stationary processes valued
  in a separable Hilbert space has known renewed interest in the past
  decade. Here we follow earlier approaches which fully exploit the
  \emph{normal Hilbert module} property of the time domain. The key
  point is to build the Gramian-Cramér representation as an isomorphic
  mapping from the \emph{modular spectral domain} to the
  \emph{modular time domain}. We also discuss the general Bochner
  theorem and provide useful results on the composition and inversion
  of lag-invariant linear filters. Finally, we derive the
  Cramér-Karhunen-Loève decomposition and harmonic functional
  principal component analysis, which are established without relying
  on additional assumptions.}

\section{Introduction}
Spectral theory for weakly stationary time series has been originally developed in a very general fashion, starting from the seminal works by
\citeauthor{kolmogorov_stationary_1941},
\cite{kolmogorov_stationary_1941}, and spanning over several decades,
see \cite{holmes-signalprocessing} and the references therein.  These
foundations include time domain and frequency domain analyses,
Cramér (or spectral) representations, the Herglotz theorem and linear filters.  In
\cite{kolmogorov_stationary_1941, holmes-signalprocessing} the adopted
framework is that of a bi-sequence $X=(X_t)_{t\in\zset}\in\cH^\zset$
valued in a Hilbert space $(\cH,\pscal{\cdot}\cdot_{\cH})$ and weakly
stationary in the sense that $\pscal{X_s}{X_t}_{\cH}$ only depends on
the lag $s-t$. In this framework, a linear filter is a
linear operator on $H_X$ onto $H_X$ which commutes with the lag
operator $U^X$, where $H_X$ is the closure in $\cH$ of the linear span of
$(X_t)_{t\in\zset}$ and $U^X$ is the operator defined on $H_X$ by
mapping $X_t$ to $X_{t+1}$ for all $t\in\zset$. As explained in
\cite[Section~3]{holmes-signalprocessing}, a complete description of such a filter is
given in the spectral domain by its transfer function. Let us
recall the essential formulas which summarize what this means.
In \cite{holmes-signalprocessing}, the spectral
theory follows from and start with
the \emph{canonical representation} of the lag operator $U^X$
above, namely
\begin{equation}
  \label{eq:homes-U-spectral}
U^X=\int_{\tore} \rme^{\rmi\lambda}\;\xi(\rmd\lambda)\;,
\end{equation}
where $\tore=\rset / (2 \pi \zset)$ and $\xi$ is the spectral measure
of $U^X$ (which is a measure valued in the space of operators on $H_X$
onto itself). This corresponds to
\cite[Eq.~(8)]{holmes-signalprocessing} with a slightly different
notation. Then defining $\hat X$ as $\xi(\cdot)X_0$ (thus a measure valued
in $H_X$), one gets the celebrated Cramér representation (see
\cite[Eq.~(13a)]{holmes-signalprocessing} again with a slightly
different notation)
\begin{equation}
  \label{eq:homes-x-spectral}
X_t=\int_{\tore} \rme^{\rmi\lambda\,t}\;\hat X(\rmd\lambda)\;,\quad t\in\zset\;.
\end{equation}
An other consequence of~(\ref{eq:homes-U-spectral}) is what is called
the Herglotz theorem in \cite[Eq.~(9)]{holmes-signalprocessing},
summarized by the formula
\begin{equation}
  \label{eq:homes-x-herglotz}
\pscal{X_s}{X_t}_{\cH}=\int_{\tore} \rme^{\rmi\lambda\,(s-t)}\;\mu(\rmd\lambda)\;,\quad s,t\in\zset\;,
\end{equation}
where $\mu=\pscal{\xi(\cdot)X_0}{X_0}_{\cH}$ is a non-negative measure
on $(\tore,\borel(\tore))$. Interpreting the right-hand side
of~(\ref{eq:homes-x-herglotz}) as the scalar product of the two
functions $\rme_s:\lambda\mapsto\rme^{\rmi\lambda s}$ and
$\rme_t:\lambda\mapsto\rme^{\rmi\lambda t}$ in
$L^2(\tore,\borel(\tore),\mu)$, Relation~(\ref{eq:homes-x-herglotz}) is simply saying that the
Cramér representation~(\ref{eq:homes-x-spectral}) mapping $\rme_t$ to $X_t$ is
isometric. Following this interpretation, one can extend this
isometric mapping to a unitary operator between the two isomorphic Hilbert spaces
$L^2(\tore,\borel(\tore),\mu)$ and $H_X$, respectively refered to as
the \emph{spectral domain} and the \emph{time domain}. In particular the output of
a linear filter with transfer function
$\Phi\in L^2(\tore,\borel(\tore),\mu)$ is given by
\begin{equation}
  \label{eq:homes-filter}
Y_t=\int \rme^{\rmi\lambda t}\,\Phi(\lambda)\;\hat{X}(\rmd\lambda)\;,\quad t\in\zset\;,
\end{equation}
or in other words, $Y_t$ is the image of the function $\rme_t\Phi$ by the extended
unitary operator that maps the spectral domain to the time domain.

The spectral
theory~(\ref{eq:homes-U-spectral})--(\ref{eq:homes-filter}) applies to
univariate times series by letting $\cH$ be the space
$L^2(\Omega,\cF,\PP)$ of $\cset$-valued random variables on
$(\Omega,\cF)$ with finite second order moment. It also applies to
multivariate time series by taking
$\cH = L^2(\Omega, \cF, \cset^q, \PP) = (L^2(\Omega, \cF, \PP))^q$ and
to functional time series by letting $\cH$ be the Bochner space
$L^2(\Omega,\cF,L^2(0,1),\PP)$ of measurable mappings
$V:\Omega\to L^2(0,1)$ such that
$\PEarg{\left\|V\right\|_{L^2(0,1)}^2}<\infty\;,$ where
$\left\|\cdot\right\|_{L^2(0,1)}$ here denotes the norm endowing the
Hilbert space $L^2(0,1)$. However, in
\cite[Section~7]{holmes-signalprocessing},
\citeauthor{holmes-signalprocessing} argues that important
generalizations are needed for multivariate time series. This claim
applies even more to functional time series. In this paper, we address such
a generalization valid in the functional context. Related issued have
been recently considered in \cite{panaretos13, panaretos13Cramer,
  these-tavakoli-2015} where, in particular, the authors derive a
functional version of the Cramér representation which relies on a
spectral density operator defined under strong assumptions on the
covariance structure of the time series. Under the same assumption,
\cite{panaretos13Cramer} introduced filters whose transfer functions
are valued in a restricted set of operators and this was latter
generalized to bounded-operator-valued transfer functions in
\cite[Section~2.5]{these-tavakoli-2015} (see also
\cite[Appendix~B.2.3]{vandelft2018}). An application of this spectral
theory to dimension reduction is proposed by the means of a harmonic
functional principal components analysis (see
\cite{panaretos13Cramer,hoermann15}).  A more general approach is
adopted in \cite{van_delft_note_herglots_2020} where the authors
provide a definition of operator-valued measures from which they
derive a functional version of the Herglotz theorem, the functional
Cramér representation, the definition of linear filters with
bounded-operator-valued transfer functions and a harmonic functional
principal component analysis in the case where the spectral measure
has finitely many discontinuities.

To complement these recent works, we here focus on the Gramian structure of the space
$\cH = L^2(\Omega,\cF,L^2(0,1),\PP)$. This approach extends naturally
the results gathered in \cite{masani_recent_1966} for the multivariate
case and where the Gramian nature of the covariance matrix plays a key
role. In particular, in the multivariate case, the lag operator $U^X$
is not only (scalar product) isometric on $H_X$ but also 
Gramian-isometric on the larger space
$\cspan{\aop X_t\,,\, t \in \zset, \aop \in \cset^{q\times q}}$. In
the functional case, we exhibit the Gramian structure of
$\cH = L^2(\Omega,\cF,L^2(0,1),\PP)$ by making it a \emph{normal
  Hilbert module}. As a result, the time domain space $H_X$ of
\cite{holmes-signalprocessing} is replaced by the
\emph{modular time domain}
\begin{equation}
  \label{eq:ts-cHX}
\cH^X=\cspan{\aop X_t\,,\,t \in \zset, \,\aop\in\cL_b(L^2(0,1))}\;,  
\end{equation}
where $\cL_b(L^2(0,1))$ denotes the space of bounded operators on
$L^2(0,1)$ onto itself. In comparison, in the definition of $H_X$ used
in \cite{holmes-signalprocessing}, $\aop$ is restricted to be a scalar
operator. Thus, while $H_X$ is a subspace of $\cH$ seen as a Hilbert
space, $\cH^X$ is a submodule of $\cH$ seen as a normal Hilbert
module. Based on this simple fact, a natural path for achieving
and fully exploiting a Cramér representation on $\cH^X$ is:
\begin{enumerate}[label=Step~\arabic*)]
\item\label{item:U-step} Interpret the
  representation~(\ref{eq:homes-U-spectral}) as the one of a Gramian-isometric
  operator on $\cH^X$ (and not only an scalar product
isometric operator on $H_X$). 
\item\label{item:Cramer-step} Deduce that the Cramér
representation~(\ref{eq:homes-x-spectral}) can effectively be extended
as a Gramian-isometric operator mapping $L^2(0,1)\to L^2(0,1)$-operator-valued functions on
$(\tore,\borel(\tore))$ to an element of $\cH^X$.
\item\label{item:herglotz-step} As a first consequence, the
scalar product isometric relation~(\ref{eq:homes-x-herglotz}) is
extended to
\begin{equation}
  \label{eq:fts-herglotz}
\gramian{X_s}{X_t}_{\cH}=\int_{\tore} \rme^{\rmi\lambda\,(s-t)}\;\nu(\rmd\lambda)\;,\quad s,t\in\zset\;,
\end{equation}
where, here, $\nu$ is an operator-valued measure on
$(\tore,\borel(\tore))$ and $\gramian{X_s}{X_t}_\cH = \Cov{X_s}{X_t}$. This Gramian-isometric relationship
corresponds to what is called the Herglotz theorem in the functional
time series case.
\item\label{item:filter-general} As a second consequence, the Cramér
  representation~(\ref{eq:homes-filter}) of a linear filter is
  extended to the case where the transfer function $\Phi$ is now an
  $L^2(0,1)\to L^2(0,1)$-operator-valued functions on
  $(\tore,\borel(\tore))$ (and not only a scalar valued functions on
  $(\tore,\borel(\tore))$). This raises the question, in particular,
  of the precise condition required on the transfer function to replace
  the condition  $\Phi\in L^2(\tore,\borel(\tore),\mu)$ of the scalar
  case. 
\item\label{item:composition-inversion-filter} An interesting
  consequence of~\ref{item:filter-general} is to
  study the composition of linear filters and deduce when and how it
  is possible to invert them.
\item\label{item:harmonic-pca} An other interesting consequence
  of~\ref{item:Cramer-step} is to derive the Cramér-Karhunen-Loève
  decomposition and the harmonic principal component analysis for any
  weakly stationary functional time series valued in a separable
  Hilbert space.
\end{enumerate}
In this contribution, we basically follow this path, up to the
following slight modifications.
\begin{enumerate}
\item We treat the more general case of a stochastic process
  $(X_t)_{t \in \lcag}$, where $(\lcag,+)$ is a locally compact
  Abelian (\lca) group set of indices and for each $t \in \lcag$,
  $X_t$ is a random variable defined on a probability space
  $(\Omega, \cF,\PP)$ and valued in a separable Hilbert space $\cH_0$
  (endowed with its Borel $\sigma$-field). Typical examples for
  $\lcag$ and $\cH_0$ are the ones of functional time series, namely
  $\lcag=\zset$ and $\cH_0=L^2(0,1)$ but, as far as spectral theory is
  concerned, the presentation of the results is not only more general
  (one can \textit{e.g.} take $\lcag=\rset$) but also more elegant in
  this general setting. Of
  course, in the discrete time case $\lcag=\zset$, any continuity
  condition imposed on a function defined on $\lcag$ is trivially
  satisfied. Such continuity conditions constitute a small price to
  pay (and the only one) in order to be able to treat the case of a
  general \lca\ group $\lcag$ rather than focusing on the 
  discrete time case alone.
\item For obvious practical reasons, it is usual to treat the mean of
  a stochastic process separately. Therefore we will assume that the
  process $(X_t)_{t \in \lcag}$ is centered.
\item We will consider the case where the separable Hilbert space $\cG_0$ in which
  the output of the filter is valued is different
  from $\cH_0$, the one of the input, that is, we replace $\aop\in\cL_b(\cH_0)$
  in~(\ref{eq:ts-cHX}) by $\aop\in\cL_b(\cH_0,\cG_0)$, the space of
  bounded operators from $\cH_0$ to $\cG_0$. This makes the results
  directly applicable in the case of different input and output
  spaces, especially in the case where they have different dimensions
  (so that they are not isomorphic). 
\end{enumerate}
The approach to derive a spectral theory
following~\ref{item:U-step}--~\ref{item:filter-general} is essentially
contained in \cite{kallianpur1971spectral,
  mandrekar-square-integrability,yuichiro-multidim}. Our main
contribution concerning these steps is to introduce all the
preliminary definitions required to understand them, to select the
most important results, to provide detailed proofs of the key points
and to bring forward this approach which offers an interesting
alternative to the ones recently proposed in \cite{panaretos13,
  panaretos13Cramer, these-tavakoli-2015, vandelft2018,
  van_delft_note_herglots_2020}.  A first benefit of the
Gramian-isometric approach is that it allows a concrete description of
the spectral domain rather than relying on the completion of a
pre-Hilbert space or on the compactification of a pointed convex cone
as used in \cite[Section~2.5]{these-tavakoli-2015} and
\cite{van_delft_note_herglots_2020}, respectively. A second benefit is
to make the Cramér representation much easier to exploit for deriving
useful general results.  This will be made apparent when establishing
the composition and inversion of filters
of~\ref{item:composition-inversion-filter}, which to our best
knowledge, appear to be novel in this degree of generality.
Similarly, our versions of the Cramér-Karhunen-Loève decomposition and
harmonic functional principal component analysis are not restricted to
the case where the spectral density operator is continuous or the
spectral measure has finitely many discontinuities as in
\cite{these-tavakoli-2015,van_delft_note_herglots_2020}. However, it
is important to note that, contrary to \cite{panaretos13,
  panaretos13Cramer, these-tavakoli-2015, hoermann15}, we do not
address the question of statistical estimation in the spectral
domain. The spectral theory we present applies to all weakly
stationary processes whereas statistical inference results require
specific assumptions. As in the univariate setting, the spectral
analysis of long memory processes necessitates assumptions and
technical developments beyond the ones used for
the spectral analysis of short memory processes. In the functional
setting, up to our knowledge, long memory processes have been mostly
studied in the time domain (see \eg\
\cite{rackauskas2011,characiejus2013central,CHARACIEJUS20142605,duker_limit_2018,
  Li-long-memory-2020}). Clarifying the general spectral theory that
applies to all functional weakly stationary processes is a first step
towards studying functional long memory processes in the spectral
domain as classically done for univariate long memory processes (see
\eg\ \cite[Section~2.4]{pipiras_taqqu_2017} about the celebrated
FARIMA processes). Such a study, however, is out of the scope of the
present paper.

The paper is organized as follows.  Basic definitions of
operator-valued measures, operator-valued functions (and the various
notions of measurability related to them) and Gramian-isometric
operators on normal Hilbert modules are assembled in
\Cref{sec:preliminaries}. \Cref{sec:more-preliminaries} contains some
preliminaries paving the way for describing the \emph{modular spectral
  domain}. In particular, we explain how to use normal Hilbert modules
for defining Gramian-orthogonally scattered
measures. \Cref{sec:spectral-analysis} contains the main results: 1)
we offer a synthesis of the results of \cite{kallianpur1971spectral,
  mandrekar-square-integrability,yuichiro-multidim} providing a
natural and complete spectral theory for weakly stationary processes
valued in a separable Hilbert space; 2) then, this approach is
exploited to address \ref{item:composition-inversion-filter}
and~\ref{item:harmonic-pca} above, successively.  All the proofs are
postponed in \Cref{sec:proofs} along with additional useful results.

\tableofcontents

\section{Basic definitions and notation}\label{sec:preliminaries}
\subsection{Operators, measurability and integrals}\label{sec:notations}
Basic definitions on linear operators can be found, for example, in \cite{Weidmann-operators-hilbert} and we refer the reader to  \cite[Chapter~1]{dinculeanu2011vector} for a nice overview of measurability and integration on Banach spaces. Throughout this paper, we will denote by $\cO(\cH, \cG)$ the set of linear operators $\aop$ from the (complex) Hilbert space $\cH$ to the (complex) Hilbert space $\cG$ whose domains, denoted by $\domain(\aop)$, are linear subspaces of $\cH$. We then denote by $\cL_b(\cH, \cG)$ its subset of
continuous operators, by $\cK(\cH, \cG)$ its subset of compact
continuous operators and, for all $p \in[1,\infty)$, by
$\cS_p(\cH, \cG)$ the Schatten-$p$ class of compact operators with
$\ell^p$ singular values. Schatten-$1$ and
Schatten-$2$ operators are usually referred to as \emph{trace-class}
and \emph{Hilbert-Schmidt} operators respectively. If $\cG = \cH$, we omit $\cG$ in the
notation of these operator sets. For $\aop \in \cL_b(\cH, \cG)$ we denote its adjoint by $\aop^\adjoint$. We denote by $\cL_b^+(\cH)$ the set of \emph{positive} operators i.e. the set of $\aop \in \cL_b(\cH)$ such that $\pscal{\aop x}{x}_{\cH} \geq 0$ for all $x\in\cH$. Similarly, $\cK^+(\cH)$ and $\cS_p^+(\cH)$ denote respectively the sets of positive compact and positive Schatten-$p$ operators. If $\aop \in \cK^+(\cH)$, $\aop^{1/2}$ denote the unique operator of $\cK^+(\cH)$ which satisfies
$\aop = \left( \aop^{1/2} \right)^2$.  The notation $\norm{\cdot}$ is used for the operator norm on $\cL_b(\cH, \cG)$ and $\norm{\cdot}_p$ is used for the Schatten-$p$ norm on $\cS_p(\cH, \cG)$. For $E \subset \cH$, we will used the notation $\cspanarg[\cH]{E}$  for the smallest linear subspace of $\cH$ which contains $E$ and is closed for the norm topology in $\cH$.

For a measurable space $(\Lambda, \calA)$ and a Banach space $(E,\norm{\cdot}_E)$, we denote by $\functionsetarg{\Lambda, \calA, E}$ the space of measurable functions from $(\Lambda,\cA)$ to $(E,\borel(E))$, where $\borel(E)$ is the Borel $\sigma$-field on $E$. For a non-negative measure $\mu$ on
$(\Lambda, \calA)$ and $p\in[1,\infty]$ , we denote by
$\cL^p(\Lambda, \calA, E, \mu)$ the space of functions
$f \in \functionsetarg{\Lambda, \calA, E}$ such that
$\int \norm{f}_E^p \, \rmd \mu$ (or $\essuparg{\mu} \norm{f}_E$ for
$p=\infty$) is finite and by $L^p(\Lambda, \calA, E, \mu)$ its
quotient space with respect to $\mu\text{-a.e.}$ equality. The
corresponding norm is denoted by
$\norm{\cdot}_{L^p(\Lambda, \calA, E, \mu)}$. The
Bochner integral is defined on $L^1(\Lambda, \calA, E, \mu)$ by linear
and continuous extension of the mapping $\1_Ax\to\mu(A)\,x$ defined for
$x\in E$ and $A\in\cA$ such that $\mu(A)<\infty$. In the particular case where $E$ is a space of linear operators between two Hilbert spaces $\cH$ and $\cG$, we use
the following weaker notion of measurability.
\begin{definition}[Simple measurability]\label{def:Fs-fiarma-fiarma}
  A function
$\Phi : \Lambda \to \cL_b(\cH,\cG)$ is said to be \emph{simply measurable} if
for all $x \in \cH$, $\lambda \mapsto \Phi(\lambda)x$ is measurable as a $\cG$-valued
function. The set of such functions is denoted by
$\simplemeasfunctionsetarg{\Lambda, \calA, \cH, \cG}$  or simply
$\simplemeasfunctionsetarg{\Lambda, \calA, \cH}$ if $\cG=\cH$. 
\end{definition}
Simple measurability is weaker than Borel measurability in the sense
that for all Banach spaces $\cE$ which are continuously embedded in
$\cL_b(\cH, \cG)$, a function in $\functionsetarg{\Lambda,
  \calA,\cE}$ is simply measurable from $\Lambda$ to
$\cL_b(\cH,\cG)$. Moreover, if $\cH$ and $\cG$ are separable and $\cE = \cK(\cH, \cG)$ or
$\cE = \cS_p(\cH, \cG)$ with $p \in \{1,2\}$, the class of simple measurable
functions valued in $\cE$ coincides with $\functionsetarg{\Lambda, \calA,\cE}$, see \Cref{lem:meas-schatten}. 

\subsection{Vector-valued and Positive Operator-Valued Measures}\label{sec:povm}
Measures valued in a Banach space, and in particular Positive
Operator-Valued Measures, are key in the spectral theory of functional
times series. This section gathers results on such measures. Details
can be found in
\cite{dinculeanu1967vector_measures,Berberian1966NotesOS}. First, we
recall that a measure $\mu$ defined on the measurable space
$(\Lambda, \calA)$ and valued in the Banach space $(E,\norm{\cdot}_E)$
is an $\calA \to E$ mapping such that, for any sequence
$(A_n)_{n \in \nset} \in \calA^\nset$ of pairwise disjoint sets,
$\mu \left(\bigcup_{n \in \nset} A_n \right) = \sum_{n \in \nset}
\mu(A_n)$, where the series converges in $E$, that is,
\begin{equation}
  \label{eq:banach-valued-measure-sigma-additive}
\lim_{N \to +\infty} \norm{\mu \left(\bigcup_{n \in \nset} A_n \right) - \sum_{n = 0}^N \mu(A_n)}_E = 0\; .  
\end{equation}
For such a measure $\mu$, the mapping
$$
\norm{\mu}_E : A \mapsto \sup \set{\sum_{i \in \nset} \norm{\mu(A_i)}_E}{(A_i)_{i \in \nset} \in \calA^\nset \text{ is a countable partition of } A}
$$
defines a non-negative measure on $(\Lambda, \calA)$ called the
\emph{variation measure} of $\mu$. For instance, if $E=\cS_1(\cH)$, we
write $\norm{\mu}_1$ since we use $\norm{\cdot}_1$ to denote the
Schatten-1 norm. Integrals of functions in
$L^1(\Lambda, \calA, \norm{\mu}_E)$ with respect to $\mu$ are
discussed in \cite[P.~120]{dinculeanu1967vector_measures}.  When
$\Lambda$ is a locally compact topological space and $\calA$ is the
Borel $\sigma$-field, an $E$-valued measure $\mu$ is said to be
\emph{regular} if for all $A \in \calA$ and $\epsilon > 0$, there
exist a compact set $K \in \calA$ and an open set $U \in \calA$ with
$K \subset A \subset U$ such that
$\norm{\mu(U \setminus K)}_E \leq \epsilon$.  The special case of
operator-valued measures is of particular interest to us and,
specifically, Positive Operator-Valued Measures (\povm's). We recall
that a sequence $(\aop_n)_{n \in \nset} \in \cL_b(\cH)^\nset$
converges to an operator $\aop \in \cL_b(\cH)$ in \emph{weak operator
  topology} (\wot) if for all $x \in \cH$,
$\displaystyle\lim_{n \to \infty} \pscal{\aop_n x}{x}_{\cH} =
\pscal{\aop x}{x}_{\cH}$.

\begin{definition}[Positive Operator-Valued Measures (\povm)]\label{def:povm}
  Let $(\Lambda, \calA)$ be a measurable space and $\cH$ be a
  Hilbert space. A Positive Operator-Valued Measure (\povm)
  on $(\Lambda, \calA, \cH)$ is a mapping
  $\nu : \calA \to \cL_b^+(\cH)$ such that for all sequences of
  disjoint sets $(A_n)_{n \in \nset} \in \calA^\nset $,
  \begin{equation}\label{eq:sigma-add}
  \nu \left(\bigcup_{n\in \nset} A_n \right) = \sum_{n \in \nset} \nu(A_n)
  \end{equation}
  where the series converges in $\cL_b^+(\cH)$ in \wot
\end{definition}
Note that the series in \eqref{eq:sigma-add} does not necessarily
converge in operator norm which implies that, in this definition, a
\povm\ does not need to be an $\cL_b(\cH)$-valued measure in the sense
of~(\ref{eq:banach-valued-measure-sigma-additive}). Therefore the
above definitions of integrals and regularity cannot be applied. This
is circumvented by noting that a \povm\ is entirely characterized by
the family of non-negative measures
$\set{\nu_{x} : A \mapsto x^\adjoint \nu(A) x}{x \in \cH}$. We refer
to Definition~14 and Theorem~20 in \cite{Berberian1966NotesOS} for
details about regular \povm's and to Theorem~9 in
\cite{Berberian1966NotesOS} for details about integration of bounded
scalar functions with respect to a \povm

When dealing with spectral operator
measures of weakly stationary processes valued in a separable Hilbert space, we
can rely on the additional trace-class property, which makes all the
previous definitions easier to handle and extend. 
\begin{definition}[Trace-class \povm]\label{def:tcpovm}
  Let $(\Lambda, \calA)$ be a measurable space, $\cH_0$ be a \emph{separable}
  Hilbert space and $\nu$ be a \povm\ on $(\Lambda, \calA, \cH_0)$. We say that
  $\nu$ is a \emph{trace-class}-\povm\ if it is
  $\cS_1^+(\cH_0)$-valued.
\end{definition}
The first advantage of a trace-class \povm\ is that it fits the
framework of vector-valued measures, namely, we have the following
result, whose proof can be found in \Cref{sec:proofs-prel-results}.
\begin{lemma}\label{lem:traceclasspovm}
  Let $(\Lambda, \calA)$ be a measurable space and $\cH_0$ be a \emph{separable}
  Hilbert space. Then a \povm\ $\nu$ on $(\Lambda, \calA, \cH_0)$ is trace-class if and
  only if $\nu(\Lambda)\in\cS_1(\cH_0)$. In this case, $\nu$ is an
  $\cS_1(\cH_0)$-valued measure (in the sense that
  \eqref{eq:sigma-add} holds in $\norm{\cdot}_1$-norm) with finite
  variation measure $\norm{\nu}_1 : A \mapsto
  \norm{\nu(A)}_1$. Moreover, $\nu$  is regular if and only if  $\norm{\nu}_1$ is regular.
\end{lemma}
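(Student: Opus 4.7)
The plan is to bootstrap from the pointwise positivity structure of the \povm: everything will be controlled by the single finite positive measure $\mu_\nu : A \mapsto \tr(\nu(A))$, made available as soon as we know $\nu(\Lambda) \in \cS_1(\cH_0)$.

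\textbf{Step 1: The trace-class characterization.} The forward direction is immediate from \Cref{def:tcpovm}. For the converse, assume $\nu(\Lambda) \in \cS_1(\cH_0)$. For every $A \in \calA$, additivity and positivity of $\nu$ give $0 \preceq \nu(A) \preceq \nu(\Lambda)$ (since $\nu(\Lambda) = \nu(A) + \nu(\Lambda \setminus A)$ with both summands in $\cL_b^+(\cH_0)$). For positive operators $0 \preceq B \preceq C$ with $C \in \cS_1^+(\cH_0)$, one has $B \in \cS_1^+(\cH_0)$ and $\|B\|_1 = \tr(B) \leq \tr(C) = \|C\|_1$ (compute the trace in an orthonormal basis of $\cH_0$, which exists by separability). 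Hence $\nu$ takes values in $\cS_1^+(\cH_0)$.

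\textbf{Step 2: Scalar control measure and $\cS_1$-valued $\sigma$-additivity.} Define $\mu_\nu(A) := \tr(\nu(A)) = \sum_{k \geq 0} \pscal{\nu(A) e_k}{e_k}_{\cH_0}$ for some Hilbert basis $(e_k)$ of $\cH_0$. Each $A \mapsto \pscal{\nu(A) e_k}{e_k}$ is a finite non-negative measure (by the \wot\ $\sigma$-additivity in \Cref{def:povm}), so Tonelli gives that $\mu_\nu$ is a finite non-negative measure on $(\Lambda, \calA)$. For disjoint $(A_n)$ with union $A$, setting $B_N = \bigcup_{n > N} A_n$, the positive operator $\nu(A) - \sum_{n=0}^N \nu(A_n) = \nu(B_N)$ satisfies $\|\nu(B_N)\|_1 = \tr(\nu(B_N)) = \mu_\nu(B_N) \to 0$ as $N \to \infty$ by $\sigma$-additivity of $\mu_\nu$. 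This proves \eqref{eq:sigma-add} in $\|\cdot\|_1$-norm.

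\textbf{Step 3: Variation measure.} For any countable partition $(A_i)$ of $A$, positivity yields $\sum_i \|\nu(A_i)\|_1 = \sum_i \tr(\nu(A_i)) = \tr(\nu(A)) = \|\nu(A)\|_1$, so $\|\nu\|_1(A) = \|\nu(A)\|_1 = \mu_\nu(A)$. In particular $\|\nu\|_1$ is finite.

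\textbf{Step 4: Regularity equivalences.} Since the variation equals $\|\nu(\cdot)\|_1$, the implication \emph{regularity of $\|\nu\|_1$ $\Rightarrow$ regularity of $\nu$ as an $\cS_1(\cH_0)$-valued measure} is trivial, and the converse follows from the identity $\|\nu(U \setminus K)\|_1 = \|\nu\|_1(U \setminus K)$. For the \povm\ regularity, the bound $\nu_x(B) = \pscal{\nu(B)x}{x}_{\cH_0} \leq \|x\|_{\cH_0}^2 \|\nu(B)\|_1$ shows that regularity as an $\cS_1(\cH_0)$-valued measure implies regularity as a \povm. The remaining direction is where the main care is needed: assuming regularity of each $\nu_x$, fix $A \in \calA$ and $\epsilon > 0$; pick $N$ with $\sum_{k > N} \nu_{e_k}(\Lambda) < \epsilon/2$ (possible since $\mu_\nu(\Lambda) < \infty$), then by regularity of $\nu_{e_0}, \ldots, \nu_{e_N}$ find a compact $K \subset A$ and open $U \supset A$ (by intersecting/unioning finitely many sets) with $\nu_{e_k}(U \setminus K) < \epsilon/(2(N+1))$ for $k \leq N$. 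Summing yields $\mu_\nu(U \setminus K) = \tr(\nu(U\setminus K)) < \epsilon$, hence $\|\nu(U \setminus K)\|_1 < \epsilon$.

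The main subtlety is Step~4 in the direction \povm-regular $\Rightarrow$ $\cS_1$-regular, which crucially uses both separability (to list a countable basis) and the finiteness of $\mu_\nu$ (to truncate the basis uniformly in $A$).
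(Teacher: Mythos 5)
Your proposal is correct, and Steps 1--3 follow essentially the same route as the paper: the domination $0 \preceq \nu(A) \preceq \nu(\Lambda)$ for the trace-class characterization, and the identity $\norm{\cdot}_1 = \tr(\cdot)$ on positive operators to get both the $\norm{\cdot}_1$-norm $\sigma$-additivity and the fact that the variation measure is simply $A \mapsto \norm{\nu(A)}_1$. The genuine divergence is in Step~4, in the direction ``$\nu$ regular as a \povm\ $\Rightarrow$ $\norm{\nu}_1$ regular.'' The paper writes $\norm{\nu}_1$ as the setwise limit of the partial sums $\mu_n = \sum_{k=0}^n \nu_{e_k}$ and invokes the Vitali--Hahn--Saks--Nikodym theorem to obtain uniform countable additivity, then a lemma of Diestel--Uhl to transfer regularity from the $\mu_n$ to the limit. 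You instead exploit positivity directly: since $\nu_{e_k}(B) \leq \nu_{e_k}(\Lambda)$ for every $B$ and $\sum_k \nu_{e_k}(\Lambda) = \tr(\nu(\Lambda)) < \infty$, the tail $\sum_{k>N}\nu_{e_k}$ is small \emph{uniformly in the set}, so truncating to finitely many basis vectors and using their individual regularity (with the standard finite union of compacts and finite intersection of opens) suffices. This is a valid and more elementary argument; it buys self-containedness at the cost of being specific to the positive, monotone situation, whereas the paper's citation-based route is the one that generalizes to signed or vector-valued setwise limits. One tiny point worth stating explicitly in Step~3 is the interchange $\sum_i \tr(\nu(A_i)) = \tr\bigl(\sum_i \nu(A_i)\bigr)$ for a countably infinite partition, which is justified by Tonelli applied to the non-negative double array $\pscal{\nu(A_i)e_k}{e_k}_{\cH_0}$ — the same device you already use in Step~2.
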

Another advantage of trace-class \povm's is that they satisfy the Radon-Nikodym property. Namely, if  $\nu$ a trace-class \povm\ on $(\Lambda, \calA, \cH_0)$ and $\mu$ is a $\sigma$-finite non-negative measure on $(\Lambda, \calA)$, then $\norm{\nu}_1 \ll \mu$  (\textit{i.e.} for all $A \in \calA$, $\mu(A) = 0 \Rightarrow \norm{\nu}_1(A) = 0$),  if and only if there exists $g \in L^1(\Lambda, \calA, \cS_1(\cH_0), \mu)$ 
  such that $\rmd \nu = g \, \rmd \mu$, \textit{i.e.} for all
  $A \in \calA$,
  \begin{equation}\label{eq:density-povm}
    \nu(A) = \int_A g \, \rmd \mu \; . 
  \end{equation}
  In this case, $g$ is unique and is called the density of $\nu$ with respect
  to $\mu$ and we write $g = \frac{\rmd \nu}{\rmd \mu}$. This result is a consequence of  Theorem 1 in \cite[Chapter~III, Section~3]{diestel1977vector} because $\cS_1(\cH_0)$ is the dual of the separable space $\cK(\cH_0)$. 

\subsection{Normal Hilbert modules}\label{sec:hilbert-modules}
Modules extend the notion of vector spaces to the case where scalar
multiplication is replaced by a multiplicative operation with elements
of a ring. The case where the ring is $\cL_b(\cH_0)$ for a separable
Hilbert space $\cH_0$ is of particular interest for $\cH_0$-valued
random variables. In short, a normal Hilbert $\cL_b(\cH_0)$-module is
a Hilbert space endowed with a \emph{module action} and a
\emph{Gramian}. A Gramian $\gramian{\cdot}{\cdot}$ is similar to a
scalar product but is valued in the space $\cS_1(\cH_0)$ and is
related to scalar product by the relation
$\pscal{\cdot}{\cdot} = \tr(\gramian{\cdot}{\cdot})$. Notions such as
sub-modules, Gramian-orthogonality, Gramian-isometric operators are
natural extensions of their counterparts in the Hilbert framework. We
give such useful definitions hereafter and refer to \cite[Chapter~2]{yuichiro-multidim} for details.

\begin{definition}[$\cL_b(\cH_0)$-module]
  Let $\cH_0$ be a separable Hilbert space. An $\cL_b(\cH_0)$-module is a commutative group $(\cH, +)$ such that
  there exists a multiplicative operation (called the \emph{module action})
  $$
  \fundef{\cL_b(\cH_0) \times \cH & \to & \cH \\ (\aop,x) & \mapsto & \aop \bullet x}
  $$
  which satisfies the usual distributive properties : for all $\aop, \bop \in \cL_b(\cH_0)$, and $x,y \in \cH$,
  \begin{align*}
    \aop \bullet (x + y) &= \aop \bullet x + \aop \bullet y, \\
    (\aop + \bop) \bullet x &= \aop \bullet x + \bop \bullet x, \\
    (\aop \bop) \bullet x &= \aop \bullet (\bop \bullet x), \\
    \Id_{\cH_0} \bullet x &= x. 
  \end{align*}
\end{definition}
Next, we endow an $\cL_b(\cH_0)$-module with an $\cL_b(\cH_0)$-valued product.
\begin{definition}[(Normal) pre-Hilbert $\cL_b(\cH_0)$-module]\label{def:prehilbertmodule}
  Let $\cH_0$ be a separable Hilbert space. We say that $(\cH, \gramian{\cdot}{\cdot}_\cH)$ is a pre-Hilbert $\cL_b(\cH_0)$-module if  $\cH$ is an $\cL_b(\cH_0)$-module and $\gramian{\cdot}{\cdot}_\cH : \cH \times \cH \to \cL_b(\cH_0)$ satisfies, for all $x,y,z \in \cH$, and $\aop \in \cL_b(\cH_0)$,
  \begin{enumerate}[label=(\roman*)]
  \item $\gramian{x}{x}_\cH \in \cL_b^+(\cH_0)$,
  \item\label{itm:gramian-definite} $\gramian{x}{x}_\cH = 0$ if and only if $x = 0$,
  \item $\gramian{x + \aop \bullet y}{z}_\cH = \gramian{x}{z}_\cH + \aop \gramian{y}{z}_\cH$,
  \item $\gramian{y}{x}_\cH = \gramian{x}{y}_\cH^\adjoint$.
  \end{enumerate}
  If moreover, for all $x,y \in \cH$, $\gramian{x}{y}_\cH \in \cS_1(\cH_0)$, we say that $\gramian{\cdot}{\cdot}_\cH$ is a \emph{Gramian} and that $\cH$ is a \emph{normal} pre-Hilbert $\cL_b(\cH_0)$-module. 
\end{definition}
Note that an $\cL_b(\cH_0)$-module is a vector space if we define the
scalar-vector multiplication by
$\alpha x = (\alpha \Id_{\cH_0}) \bullet x$ for all
$\alpha \in \cset$, $x \in \cH$ and that, in the particular case where
$\gramian{\cdot}{\cdot}_\cH$ is a Gramian, then
$\pscal{\cdot}{\cdot}_\cH := \tr \gramian{\cdot}{\cdot}_\cH$ is a
scalar product. Hence a normal pre-Hilbert $\cL_b(\cH_0)$-module is
also a pre-Hilbert space. A normal pre-Hilbert $\cL_b(\cH_0)$-module
is said to be a \emph{normal Hilbert $\cL_b(\cH_0)$-module} if it is
complete (for the norm defined by
$\norm{x}_\cH^2 = \pscal{x}{x}_\cH = \norm{\gramian{x}{x}_\cH}_1$). A
subset of $\cH$ is called a \emph{submodule} if it is an
$\cL_b(\cH_0)$-module. An operator $\fop \in \cL_b(\cH, \cG)$, where
$\cH$ and $\cG$ are two $\cL_b(\cH_0)$-module, is said to be
\emph{$\cL_b(\cH_0)$-linear} if for all $\aop \in \cL_b(\cH_0)$ and
$x \in \cH$, $\fop (\aop \bullet x) = \aop \bullet (\fop x)$. An
$\cL_b(\cH_0)$-linear operator $U$ between two pre-Hilbert
$\cL_b(\cH_0)$-modules $\cH$ and $\cG$ is said to be
\emph{Gramian-isometric} if for all $x,y \in \cH$,
$\gramian{Ux}{Uy}_\cG = \gramian{x}{y}_\cH$ and \emph{Gramian-unitary}
if it is bijective Gramian-isometric. The space $\cH$ is said to be
\emph{Gramian-isometrically embedded} in $\cG$ (denoted by
$\cH \gramianisometricallyembedded \cG$) if there exists a
Gramian-isometric operator from $\cH$ to $\cG$. The spaces $\cH$ and
$\cG$ are said to be \emph{Gramian-isometrically isomorphic} (denoted
by $\cH \gramianisomorphic \cG$) if there exists a Gramian-unitary
operator from $\cH$ to $\cG$.  The well known isometric extension
theorem can be straightforwardly generalized to the case of
Gramian-isometric operators and is stated in the following proposition
for latter reference.
\begin{proposition}[Gramian-isometric extension]\label{prop:gramian-isometric-extension}
  Let $\cH_0$ be a separable Hilbert space, $\cH$ be a normal
  pre-Hilbert $\cL_b(\cH_0)$-module, and $\cG$ be a
  normal Hilbert $\cL_b(\cH_0)$-module. Let $(v_j)_{j \in J}$ and
  $(w_j)_{j \in J}$ be two collections of vectors in $\cH$ and $\cG$
  respectively with $J$ an arbitrary index set. If for all
  $i,j \in J$, $\gramian{v_i}{v_j}_{\cH} = \gramian{w_i}{w_j}_{\cG}$
  then there exists a unique Gramian-isometric operator
  $$
  S : \cspanarg[\cH]{\aop \bullet v_j, \aop \in \cL_b(\cH_0), j \in J} \to \cG
  $$
  such that for all $j \in J$, $S v_j = w_j$. If moreover $\cH$ is complete then
  $$
  S \left(\cspanarg[\cH]{\aop \bullet v_j, \aop \in \cL_b(\cH_0), j \in J}\right) = \cspanarg[\cG]{\aop \bullet w_j, \aop \in \cL_b(\cH_0), j \in J}
  $$
\end{proposition}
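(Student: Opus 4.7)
The plan is the classical isometric-extension argument adapted to the Gramian setting. I would first define $S$ on the linear span
$\calD := \lspan{\aop \bullet v_j \,:\, \aop \in \cL_b(\cH_0),\, j \in J}$
(which is dense in $\cspanarg[\cH]{\aop \bullet v_j,\, \aop \in \cL_b(\cH_0),\, j \in J}$) by setting
\begin{equation*}
S\!\left(\sum_{k=1}^n \aop_k \bullet v_{j_k}\right) := \sum_{k=1}^n \aop_k \bullet w_{j_k}\;,
\end{equation*}
for any finite linear combination. Well-definedness and $\cL_b(\cH_0)$-linearity of $S$ on $\calD$ both reduce to the implication: if $\sum_k \aop_k \bullet v_{j_k} = 0$ in $\cH$ then $\sum_k \aop_k \bullet w_{j_k} = 0$ in $\cG$.

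To prove this implication, I would expand the Gramian bilinearly, using the $\cL_b(\cH_0)$-linearity in the first argument and the adjoint-conjugate symmetry from \Cref{def:prehilbertmodule}, to obtain
\begin{equation*}
\gramian{\sum_{k} \aop_k \bullet v_{j_k}}{\sum_{l} \bop_l \bullet v_{j'_l}}_\cH
=\sum_{k,l} \aop_k\, \gramian{v_{j_k}}{v_{j'_l}}_\cH\, \bop_l^\adjoint\;,
\end{equation*}
and the analogous identity in $\cG$. Since the two Gramians coincide term by term by hypothesis, these two bilinear forms in $(\aop_k, \bop_l)$ agree identically; specializing to $\bop_l = \aop_l$ and identical index lists yields
\begin{equation*}
\gramian{\textstyle\sum_k \aop_k \bullet v_{j_k}}{\textstyle\sum_k \aop_k \bullet v_{j_k}}_\cH
= \gramian{\textstyle\sum_k \aop_k \bullet w_{j_k}}{\textstyle\sum_k \aop_k \bullet w_{j_k}}_\cG\;,
\end{equation*}
so the definiteness property \ref{itm:gramian-definite} forces the required implication. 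Consequently $S$ is a well-defined $\cL_b(\cH_0)$-linear map that is Gramian-isometric on $\calD$; taking traces of the full cross-term identity gives Hilbert-norm isometry, so $S$ extends uniquely by continuity to a bounded linear operator $\bar S : \cspanarg[\cH]{\aop \bullet v_j,\, \aop \in \cL_b(\cH_0),\, j \in J} \to \cG$, the target being complete.

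It remains to carry the $\cL_b(\cH_0)$-linearity and the Gramian identity from $\calD$ to its closure, which follows by continuity of the module action $(\aop,x)\mapsto \aop\bullet x$ and of the Gramian (the latter from the Cauchy--Schwarz bound $\norm{\gramian{x}{y}_\cH}_1 \le \norm{x}_\cH\,\norm{y}_\cH$, an immediate consequence of $\pscal{\cdot}{\cdot}_\cH=\tr \gramian{\cdot}{\cdot}_\cH$). Uniqueness of $S$ is automatic, since any $\cL_b(\cH_0)$-linear Gramian-isometric extension of $v_j \mapsto w_j$ coincides with our definition on $\calD$ and is continuous. Finally, if $\cH$ is complete, then $\cspanarg[\cH]{\aop \bullet v_j}$ is closed, hence complete, and its image under the isometry $\bar S$ is closed in $\cG$; this closed image contains $\lspan{\aop \bullet w_j,\, \aop \in \cL_b(\cH_0),\, j \in J}$, so it contains $\cspanarg[\cG]{\aop \bullet w_j}$, and the reverse inclusion follows by continuity of $\bar S$ applied to sequences in $\calD$. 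The main --- but mild --- obstacle is the bookkeeping required to transfer the three structural properties (linearity, module action compatibility, and the Gramian identity) simultaneously to the closure; no deep ingredient is needed beyond the pre-Hilbert $\cL_b(\cH_0)$-module axioms and the Gramian Cauchy--Schwarz inequality.
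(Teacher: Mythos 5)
Your argument is correct and is precisely the classical isometric-extension proof transported to the Gramian setting, which is exactly what the paper has in mind — it states the proposition without proof, calling it a straightforward generalization of the isometric extension theorem. The only step you label "immediate" that deserves a line is the Gramian Cauchy--Schwarz bound $\norm{\gramian{x}{y}_\cH}_1 \le \norm{x}_\cH \norm{y}_\cH$: it follows not from the trace relation alone but from the duality $\norm{T}_1=\sup_{\norm{\aop}\le 1}\abs{\tr(\aop T)}$ together with $\tr(\aop\gramian{x}{y}_\cH)=\pscal{\aop\bullet x}{y}_\cH$ and $\norm{\aop\bullet x}_\cH\le\norm{\aop}\,\norm{x}_\cH$; with that detail supplied, every step goes through.
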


As stated in the introduction, the spectral theory for functional time series relies on the Gramian structure of the space of functional random variables with finite second order moment. The following example exhibits this structure.

\begin{example}[Normal Hilbert module $\cM(\Omega, \cF, \cH_0, \PP)$]\label{exple:functional-rv}
  Let $(\Omega, \cF, \PP)$ be a probability space and $\cH_0$ be a
  separable Hilbert space. The Bochner space
  $L^2(\Omega, \cF, \cH_0, \PP)$ is the space of $\cH_0$-valued random
  variables $Y$ such that $\PE[\norm{Y}_{\cH_0}^2] < +\infty$. Then the
  expectation of $Y$ is the unique
  vector $\PE[Y] \in \cH_0$ satisfying
$$
\pscal{\PE[Y]}{x}_{\cH_0} = \PE[\pscal{Y}{x}_{\cH_0}], \quad \text{for all } x \in \cH_0\;,
$$
and the covariance operator between $Y,Z \in L^2(\Omega, \cF, \cH_0, \PP)$ is the unique linear operator $\Cov{Y}{Z} \in \cL_b(\cH_0)$, satisfying
$$
\pscal{\Cov{Y}{Z} y}{x}_{\cH_0} = \Cov{\pscal{Y}{x}_{\cH_0}}{\pscal{Z}{y}_{\cH_0}}, \quad \text{for all } x,y \in \cH_0 \; .
$$
The space $\cM(\Omega, \cF, \cH_0, \PP)$ of all centered
  random variables in $L^2(\Omega, \cF, \cH_0, \PP)$ is a normal
  Hilbert $\cL_b(\cH_0)$-module for the module action defined for all
  $\aop\in\cL_b(\cH_0)$ and $X\in\cM(\Omega, \cF, \cH_0, \PP)$ by  $\aop \bullet X=\aop X$, and the Gramian
$$
\gramian{X}{Y}_{\cM(\Omega, \cF, \cH_0, \PP)}=\cov XY \; . 
$$
\end{example}

\section{Towards the stochastic integral}
\label{sec:more-preliminaries}

\subsection{Gramian-orthogonally scattered (\cagos) measures}\label{sec:cagos}
In this section, we introduce the notion of random \cagos\ measures
which will have an important role in the construction provided by
\cite{kallianpur1971spectral,
  mandrekar-square-integrability,yuichiro-multidim}. The terminologies
\caos\ and \cagos\ are borrowed from Definition~3 in \cite[Section~3.1]{yuichiro-multidim}

\begin{definition}[(Random) \caos\  measures]
  \label{def:caos}
  Let $\cH$ be a Hilbert space and $(\Lambda, \calA)$ be a measurable
  space. We say that $W : \calA \to \cH$ is a \emph{countably additive
    orthogonally scattered} (\caos) measure on $(\Lambda, \calA, \cH)$
  if it is an $\cH$-valued measure on $(\Lambda, \calA)$ such that for
  all $A,B \in \calA$,
  $$
  A \cap B = \emptyset \Rightarrow \pscal{W(A)}{W(B)}_{\cH} = 0\;.
  $$
  In this case, the mapping
  $$
  \nu_W : A \mapsto \pscal{W(A)}{W(A)}_\cH 
  $$
  is a finite non-negative measure on $(\Lambda, \calA)$
  called the \emph{intensity measure} of $W$ and we have that, for all
  $A,B \in \calA$,
  \begin{equation}
    \label{eq:caos-basic-isometric-identity}
  \nu_W(A \cap B) = \pscal{W(A)}{W(B)}_{\cH}\;.    
  \end{equation}
  We say that $W$ is regular if $\nu_W$ is regular. When 
  $\cH$ is the space $\cM(\Omega, \cF, \cH_0, \PP)$ of~\Cref{exple:functional-rv}, we
  say that $W$ is an $\cH_0$-valued \emph{random \caos\ measure} on
  $(\Lambda, \cA, \Omega, \cF, \PP)$.
\end{definition}
The generalization to a normal Hilbert module is straightforward. 
\begin{definition}[(Random) \cagos\ measures]
  \label{def:cagos}
  Let $\cH_0$ be a separable Hilbert space, $\cH$ be a normal Hilbert
  $\cL_b(\cH_0)$-module and $(\Lambda, \calA)$ be a measurable space. We
  say that $W : \calA \to \cH$ is a \emph{countably additive
    Gramian-orthogonally scattered} (\cagos) measure on
  $(\Lambda, \calA, \cH)$ if it is an $\cH$-valued measure on
  $(\Lambda, \calA)$ such that for all $A,B \in \calA$,
  $$
  A \cap B = \emptyset \Rightarrow \gramian{W(A)}{W(B)}_{\cH} = 0\;.
  $$
  In this case, the mapping
  $$
  \nu_W : A \mapsto \gramian{W(A)}{W(A)}_\cH 
  $$
  is a trace-class \povm\ on $(\Lambda, \calA, \cH_0)$
  called the \emph{intensity operator measure} of $W$ and we have that, for all
  $A,B \in \calA$,
   \begin{equation}
    \label{eq:cagos-basic-isometric-identity}
  \nu_W(A \cap B) = \gramian{W(A)}{W(B)}_{\cH}\;.
\end{equation}
We say that $W$ is regular if $\norm{\nu_W}_1$ is regular. When
  $\cH=\cM(\Omega, \cF, \cH_0, \PP)$ of
  \Cref{exple:functional-rv}, we say that $W$ is an $\cH_0$-valued
  \emph{random \cagos\ measure} on $(\Lambda, \cA, \Omega, \cF, \PP)$.
\end{definition}

It is easy to show that a \caos\ measure $W$ as in \Cref{def:caos} can be
equivalently seen as the restriction of an isometric operator $I$ from
$L^2(\Lambda, \calA, \nu_W)$ onto $\cH$ by setting
$$
W(A)=I(\1_A)\;,\qquad A\in\Lambda\;.
$$
This simply follows by interpreting the left-hand side
of~(\ref{eq:caos-basic-isometric-identity}) as the scalar product
between $\1_A$ and $\1_B$ in $L^2(\Lambda, \calA, \nu_W)$ so that $I$
above can be defined as the unique isometric extension from
$L^2(\Lambda, \calA, \nu_W)$ to $\cH$ of the isometric mapping defined
by $\1_A\mapsto W(A)$ for $A\in\Lambda$.  This observation gives a
rigorous meaning to the integral in the Cramér representation
\eqref{eq:homes-x-spectral} where $\hat{X}$ is \caos\ (see
\cite[Section~2]{holmes-signalprocessing}).  Similarly, if $W$ is a
\cagos\ measure as in \Cref{def:cagos} and $\cH_0 = \cset^q$, the
mapping defined by $\1_A\aop\mapsto \aop W(A)$ for $A\in\Lambda$ and
$\aop\in\cset^{q\times q}$ is Gramian-isometric from a normal
pre-Hilbert module of matrix-valued functions onto $\cH$ (see
\cite{masani_recent_1966}). This observation is a key step to derive a
Cramér representation of the type \eqref{eq:homes-x-spectral} where
$(X_t)_{t \in \zset}$ is a multivariate time series and $\hat{X}$ is
\cagos\ In the infinite dimensional case, the Gramian-isometric
property of the mapping defined by $\1_A\aop\mapsto \aop W(A)$ for
$A\in\Lambda$ and $\aop\in \cL_b(\cH_0)$ can also be established. This
is done in \cite[Section~2.5]{these-tavakoli-2015} where the author
uses the completion of $L^2(\Lambda,\calA,\cL_b(\cH_0),\norm{\nu}_1)$
under an appropriate norm. These ideas are in fact very similar to the
ones of \cite{kallianpur1971spectral,
  mandrekar-square-integrability,yuichiro-multidim} with the exception
that the latter references provide a more general framework and lead
to a modular spectral domain which is an explicit set of
operator-valued functions defined on $\Lambda$.  We follow this
approach in the next section.

\subsection{The space $\mathsf{L}^2(\Lambda, \calA, \cO(\cH_0,\cG_0), \nu)$}\label{sec:square-integral-povm-complete}

As discussed in the previous sections, the role of \caos\ and \cagos\
measures in the spectral theory of weakly stationary processes relies
on their characterization by unitary or Gramian-unitary operators
between the (modular) time domain and the (modular) spectral
domain. This has been entirely studied in the case of univariate and
multivariate time series, see \cite{holmes-signalprocessing} and
\cite{masani_recent_1966}, respectively, and the references
therein. For time series valued in a general separable Hilbert space,
defining the modular spectral domain requires to exhibit a suitable
space of operator-valued
functions which are \emph{square-integrable} with respect to the trace-class \povm\
$\nu$. It was introduced in \cite{mandrekar-square-integrability} and
includes the space
$L^2(\Lambda, \calA, \cL_b(\cH_0,\cG_0), \norm{\nu}_1)$ but is in
general larger in the case where $\cH_0$ has infinite
dimension. The definition relies on the following notion of measurability which we slightly adapted
from \cite{mandrekar-square-integrability}, \cite[Section~3.4]{yuichiro-multidim}.
\begin{definition}[$\cO$-measurability] Given two Hilbert spaces $\cH$ and $\cG$, a function $\Phi : \Lambda \to \cO(\cH,\cG)$ is said to be $\cO$-measurable if it satisfies the two following conditions.
\begin{enumerate}[label=(\roman*)]
\item\label{itm:O-meas-i} For all $x \in \cH$, $\set{\lambda \in \Lambda}{x \in \domain(\Phi(\lambda))} \in \calA$.
\item\label{itm:O-meas-ii} There exists a sequence
  $(\Phi_n)_{n \in \nset}$ valued in $\simplemeasfunctionsetarg{\Lambda, \calA, \cH,\cG}$
  such that for all $\lambda \in \Lambda$ and  $x \in \domain(\Phi(\lambda))$,
  $\Phi_n(\lambda) x$ converges to  $\Phi(\lambda) x$ in $\cG$ as $n\to\infty$.
\end{enumerate}
We denote by $\weakmeasfunctionsetarg{\Lambda, \calA, \cH,\cG}$ the space of such
functions $\Phi$.  
\end{definition}
Square-integrability with respect to a trace-class \povm\ $\nu$ is then defined as follows. 
\begin{definition}\label{def:square-integrability}
  Let $(\Lambda, \cA)$ be a measurable space, $\cH_0, \cG_0,\cI_0$ be
  three separable Hilbert spaces and $\nu$ a trace-class \povm\ on
  $(\Lambda, \cA, \cH_0)$ with density $f$ with respect to its finite
  variation $ \norm{\nu}_1$. Then, we say that 
  $(\Phi,\Psi)\in\weakmeasfunctionsetarg{\Lambda,\calA,\cH_0,
    \cG_0}\times\weakmeasfunctionsetarg{\Lambda,\calA,\cH_0, \cI_0}$ is \emph{$\nu$-integrable} if the three following
  assertions hold.
  \begin{enumerate}[label=(\roman*)]
  \item\label{itm:range-in-dom} We have $\range(f^{1/2}) \subset \domain(\Phi)$ and $\range(f^{1/2}) \subset \domain(\Psi)$, $\norm{\nu}_1$-a.e.
  \item\label{itm:compo-in-S2} We have $\Phi f^{1/2}\in\cS_2(\cH_0, \cG_0)$ and $\Psi
    f^{1/2}\in\cS_2(\cH_0,
    \cI_0)$,  $\norm{\nu}_1$-a.e.
  \item\label{itm:inL1traceclass} We have $(\Phi f^{1/2})(\Psi f^{1/2})^\adjoint \in \cL^1(\Lambda, \calA, \cS_1(\cI_0,\cG_0), \norm{\nu}_1)$. 
  \end{enumerate}
  In this case,  we define 
   \begin{equation}\label{eq:integral-L2-unbounded}
  \int \Phi\rmd \nu \Psi^\adjoint := \int (\Phi f^{1/2})(\Psi f^{1/2})^\adjoint \, \rmd \norm{\nu}_1 \in \cS_1(\cI_0,\cG_0) \; .
  \end{equation}
  Moreover, we say that $\Phi\in\weakmeasfunctionsetarg{\Lambda,\calA,\cH_0,
    \cG_0}$ is \emph{square
  $\nu$-integrable} if  $(\Phi, \Phi)$ is $\nu$-integrable and we denote by
  $\mathscr{L}^2(\Lambda, \calA, \cO(\cH_0,\cG_0), \nu)$ the space of square $\nu$-integrable functions in $\weakmeasfunctionsetarg{\Lambda, \calA, \cH_0,
    \cG_0}$.
\end{definition}
\begin{remark}
  \label{rem-tcpovm}
  Let us briefly comment this definition.
  \begin{enumerate}[label=\arabic*)]
  \item The integral~(\ref{eq:integral-L2-unbounded}) of
  \Cref{def:square-integrability} can be seen as an extension of the
  integral of scalar-valued functions with respect to a trace-class \povm\ since, for
  a measurable scalar function $\phi:\Lambda\to\cset$ we can interpret
  the integral $\int \phi \, \rmd\nu$ as the one in~(\ref{eq:integral-L2-unbounded}) with  $\Phi:\lambda\mapsto \phi(\lambda)\Id_{\cH_0}$ and
  $\Psi\equiv\Id_{\cH_0}$. 
\item It is easy to show that for all
$\Phi,\Psi\in\mathscr{L}^2(\Lambda, \calA, \cO(\cH_0,\cG_0), \nu)$,
$(\Phi, \Psi)$ is $\nu$-integrable and thus $\int\Phi \rmd \nu
\Psi^\adjoint$ is well defined as above. 

\item In the special case where $\Phi$ and $\Psi$ are valued in
$\cL_b(\cH_0, \cG_0)$, $\cO$-measurability reduces to
simple measurability, \ref{itm:range-in-dom} and \ref{itm:compo-in-S2}
are always verified, \ref{itm:inL1traceclass} is equivalent to
$\Phi f \Psi^\adjoint \in \cL^1(\Lambda, \calA, \cS_1(\cG_0),
\norm{\nu}_1)$, in which case we have
$$
\int \Phi \rmd \nu \Psi^\adjoint = \int \Phi f \Psi^\adjoint \, \rmd \norm{\nu}_1 \; .
$$
In particular, since $\norm{\Phi f
  \Phi^\adjoint}_1\leq\norm{\Phi}^2\,\norm{f}_1=\norm{\Phi}^2$,
  $\norm{\nu}_1-$a.e., we get that
$$
\cL^2(\Lambda,\calA,\cL_b(\cH_0, \cG_0),\norm{\nu}_1)\subset \mathscr{L}^2(\Lambda, \calA, \cO(\cH_0,\cG_0), \nu)\;.
$$
Moreover, the mapping defined, for all $\Phi, \Psi \in \cL^2(\Lambda,\calA,\cL_b(\cH_0, \cG_0),\norm{\nu}_1)$, by $\gramian{\Phi}{\Psi}_\nu := \int \Phi \rmd\nu \Psi^\adjoint$ is a pseudo Gramian in the sense that it satisfies all assumptions of \Cref{def:prehilbertmodule} except  assumption~\ref{itm:gramian-definite}. In particular, the space $\mathscr{L}^2(\Lambda, \calA, \cO(\cH_0,\cG_0), \nu)$ is larger than the ones used in \cite{panaretos13Cramer} and \cite[Appendix~B.2.3]{vandelft2018} for filtering functional time series. 
\end{enumerate}
\end{remark}
The following theorem, which corresponds to \cite[Theorem~4.19]{mandrekar-square-integrability} and Theorem~11 in \cite[Section~3.4]{yuichiro-multidim}, shows that the same Gramian can be
used over the larger space 
$\mathscr{L}^2(\Lambda, \calA, \cO(\cH_0, \cG_0), \nu)$ and that it
makes this space a
normal Hilbert $\cL_b(\cG_0)$-module when quotiented by the set with
zero norm.
\begin{theorem}\label{thm:pseudo-gramian-L2-unbounded}
  Let $\cH_0, \cG_0$ be separable Hilbert spaces, $(\Lambda, \calA)$ a measurable space, $\nu$ a trace-class \povm\ on $(\Lambda, \calA, \cH_0)$ and $f = \frac{\rmd \nu}{\rmd \norm{\nu}_1}$. Then $\mathscr{L}^2(\Lambda, \calA, \cO(\cH_0, \cG_0), \nu)$ is an $\cL_b(\cG_0)$-module with module action
  $$
  \aop \bullet \Phi : \lambda \mapsto \aop \Phi(\lambda), \quad \aop
  \in \cL_b(\cG_0), \Phi \in \mathscr{L}^2(\Lambda, \calA, \cO(\cH_0,
  \cG_0), \nu) \;.
  $$
  Moreover, we can endow $\mathscr{L}^2(\Lambda, \calA, \cO(\cH_0, \cG_0), \nu)$
  with the pseudo-Gramian 
  \begin{equation}\label{eq:gramian-L2-unbounded}
    \gramian{\Phi}{\Psi}_{\nu} := \int \Phi \rmd \nu \Psi^\adjoint  \quad \Phi, \Psi \in \mathscr{L}^2(\Lambda, \calA, \cO(\cH_0, \cG_0), \nu)\;.
  \end{equation}
  Then, for all $\Phi \in \mathscr{L}^2(\Lambda, \calA, \cO(\cH_0, \cG_0), \nu)$,  we have
  $$
  \norm{\Phi}_\nu = \norm{\gramian{\Phi}{\Phi}_\nu}_1^{1/2}= 0\Longleftrightarrow\Phi f^{1/2} = 0 \quad \norm{\nu}_1\text{-a.e.} 
  $$
Let us denote the class of such $\Phi$'s by  $\{\norm{\cdot}_\nu=0\}$ and
the quotient space by
  $$
  \mathsf{L}^2(\Lambda, \calA, \cO(\cH_0, \cG_0), \nu) :=
  \quotient{\mathscr{L}^2(\Lambda, \calA, \cO(\cH_0, \cG_0), \nu)}
  {\{\norm{\cdot}_\nu=0\}} \; .
  $$
  Then 
  $\left(\mathsf{L}^2(\Lambda, \calA, \cO(\cH_0, \cG_0), \nu),\gramian{\cdot}{\cdot}_\nu\right)$ is a normal Hilbert
  $\cL_b(\cG_0)$-module.
\end{theorem}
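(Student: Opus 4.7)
The proof proceeds in three stages: (i) verify the module and linear structure on $\mathscr{L}^2(\Lambda,\calA,\cO(\cH_0,\cG_0),\nu)$, (ii) check the pseudo-Gramian axioms and identify the zero-norm subspace, and (iii) establish completeness after quotienting, which is the main obstacle.

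First I would check that $\mathscr{L}^2(\Lambda,\calA,\cO(\cH_0,\cG_0),\nu)$ is stable under the prescribed module action and under finite sums. For $\aop\in\cL_b(\cG_0)$, the function $\aop\bullet\Phi$ inherits condition \ref{itm:range-in-dom} trivially from $\Phi$, and $\cO$-measurability by composing on the left an approximating sequence of $\Phi$ with $\aop$ (which preserves simple measurability); the identity $(\aop\bullet\Phi)f^{1/2}=\aop\cdot\Phi f^{1/2}$ together with $\norm{\aop\Phi f^{1/2}}_2\le\norm{\aop}\norm{\Phi f^{1/2}}_2$ then transports \ref{itm:compo-in-S2} and \ref{itm:inL1traceclass}. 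Stability under sums uses the triangle inequality in $\cS_2(\cH_0,\cG_0)$ and the bilinear expansion of $(\Phi+\Psi)f^{1/2}\bigl((\Phi+\Psi)f^{1/2}\bigr)^\adjoint$ combined with the non-commutative Cauchy--Schwarz inequality $\norm{AB^\adjoint}_1\le\norm{A}_2\norm{B}_2$ in Schatten classes. Applied pointwise and then integrated, this same Cauchy--Schwarz shows that for any $\Phi,\Psi\in\mathscr{L}^2(\Lambda,\calA,\cO(\cH_0,\cG_0),\nu)$ the pair $(\Phi,\Psi)$ is $\nu$-integrable, so $\gramian{\Phi}{\Psi}_{\nu}=\int\Phi\,\rmd\nu\,\Psi^\adjoint$ is a well-defined element of $\cS_1(\cG_0)$, as required.

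Next I would check the four pseudo-Gramian axioms of \Cref{def:prehilbertmodule} except \ref{itm:gramian-definite}. Positivity $\gramian{\Phi}{\Phi}_\nu\in\cS_1^+(\cG_0)$ follows from $(\Phi f^{1/2})(\Phi f^{1/2})^\adjoint\in\cS_1^+(\cG_0)$ $\norm{\nu}_1$-a.e., together with the fact that the Bochner integral preserves the closed positive cone. Hermitian symmetry uses $(AB^\adjoint)^\adjoint=BA^\adjoint$ inside the integral, and module-linearity on the first argument is a direct consequence of linearity of the Bochner integral. The identity
$$
\norm{\Phi}_{\nu}^2=\tr\int(\Phi f^{1/2})(\Phi f^{1/2})^\adjoint\,\rmd\norm{\nu}_1=\int\norm{\Phi f^{1/2}}_2^2\,\rmd\norm{\nu}_1,
$$
in which the trace commutes with the $\cS_1$-valued Bochner integral, gives $\norm{\Phi}_\nu=0\iff\Phi f^{1/2}=0$ $\norm{\nu}_1$-a.e. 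Since this equivalence is exactly the null set being quotiented out, $\gramian{\cdot}{\cdot}_\nu$ descends to a genuine Gramian on $\mathsf{L}^2(\Lambda,\calA,\cO(\cH_0,\cG_0),\nu)$ and the quotient is a normal pre-Hilbert $\cL_b(\cG_0)$-module.

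The hard part is completeness of the quotient. The plan is to consider the map
$$
J:\mathsf{L}^2(\Lambda,\calA,\cO(\cH_0,\cG_0),\nu)\to L^2(\Lambda,\calA,\cS_2(\cH_0,\cG_0),\norm{\nu}_1),\qquad [\Phi]\mapsto\Phi f^{1/2},
$$
which is well-defined and isometric by the norm identity above. Given a Cauchy sequence $([\Phi_n])$ in $\mathsf{L}^2$, the image $(\Phi_nf^{1/2})$ is Cauchy in the complete Bochner space and converges to some $\Psi$. Passing to an $\cS_2$-a.e. convergent subsequence $(\Phi_{n_k}f^{1/2})$, one obtains, off an exceptional $\norm{\nu}_1$-null set $N$, the inclusion $\ker f^{1/2}(\lambda)\subseteq\ker\Psi(\lambda)$: indeed for $x\in\ker f^{1/2}(\lambda)$, $\Psi(\lambda)x=\lim_k \Phi_{n_k}(\lambda)f^{1/2}(\lambda)x=0$. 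This allows to define, for $\lambda\notin N$, an operator $\Phi(\lambda)$ with $\domain\Phi(\lambda)=\range f^{1/2}(\lambda)$ by
$$
\Phi(\lambda)\bigl(f^{1/2}(\lambda)x\bigr)=\Psi(\lambda)x,\qquad x\in\cH_0,
$$
and to set $\Phi(\lambda)=0$ on $N$. By construction $\Phi f^{1/2}=\Psi$ $\norm{\nu}_1$-a.e., so conditions \ref{itm:range-in-dom}--\ref{itm:inL1traceclass} of $\nu$-integrability are immediate. The delicate point is exhibiting a sequence in $\simplemeasfunctionsetarg{\Lambda,\calA,\cH_0,\cG_0}$ witnessing $\cO$-measurability of $\Phi$: each $\Phi_{n_k}\in\weakmeasfunctionsetarg{\Lambda,\calA,\cH_0,\cG_0}$ comes with its own simple-measurable approximators, and a diagonal extraction together with the pointwise convergence $\Phi_{n_k}(\lambda)f^{1/2}(\lambda)x\to\Psi(\lambda)x=\Phi(\lambda)\bigl(f^{1/2}(\lambda)x\bigr)$ on $\domain\Phi(\lambda)$ produces the desired sequence. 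Once $\Phi\in\mathscr{L}^2(\Lambda,\calA,\cO(\cH_0,\cG_0),\nu)$ is confirmed, the identity $J[\Phi]=\Psi=\lim J[\Phi_n]$ combined with the isometry of $J$ yields $[\Phi_n]\to[\Phi]$ in $\mathsf{L}^2$, completing the proof.
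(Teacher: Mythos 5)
Your overall architecture is the right one, and it is essentially the one the paper relies on (the paper's own proof is a two-line reduction to \Cref{prop:charac-L2O} plus a citation of \cite[Section~3.4, Theorem~11]{yuichiro-multidim} for completeness): the algebraic verifications are routine, and completeness is obtained by transporting a Cauchy sequence through the isometry $\Phi\mapsto\Phi f^{1/2}$ into the complete Bochner space $L^2(\Lambda,\calA,\cS_2(\cH_0,\cG_0),\norm{\nu}_1)$ and reconstructing a preimage of the limit $\Psi$. Your definition of $\Phi(\lambda)$ on $\range(f^{1/2}(\lambda))$, justified by $\ker f^{1/2}(\lambda)\subseteq\ker\Psi(\lambda)$ along an a.e.-convergent subsequence, is correct, as are the module, pseudo-Gramian and null-space computations.

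The gap is in the verification that the reconstructed $\Phi$ belongs to $\weakmeasfunctionsetarg{\Lambda,\calA,\cH_0,\cG_0}$. First, you never address condition~\ref{itm:O-meas-i} of $\cO$-measurability: you must show that $\set{\lambda}{x\in\range(f^{1/2}(\lambda))}$ is measurable for each $x\in\cH_0$; this is true but needs an argument (for instance, $x\in\range(f^{1/2}(\lambda))$ if and only if $\sup_{m}\pscal{(f(\lambda)+m^{-1}\Id_{\cH_0})^{-1}x}{x}_{\cH_0}<\infty$, each term being measurable in $\lambda$). Second, and more seriously, the diagonal extraction proposed for condition~\ref{itm:O-meas-ii} does not work as stated: for each fixed $k$ the approximators $\Phi_{n_k}^{(m)}(\lambda)z\to\Phi_{n_k}(\lambda)z$ converge pointwise in the uncountable pair $(\lambda,z)$ with no uniformity, so no single choice of $m_k$ can be guaranteed to make $\Phi_{n_k}^{(m_k)}(\lambda)z-\Phi_{n_k}(\lambda)z\to0$ simultaneously for every $\lambda$ and every $z$ in the domain. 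The standard repair is to discard the approximators of the $\Phi_{n_k}$'s and exhibit an explicit regularizing sequence built from $\Psi$ and $f$, namely $\Phi_m:\lambda\mapsto\Psi(\lambda)\,f^{1/2}(\lambda)\,(f(\lambda)+m^{-1}\Id_{\cH_0})^{-1}$: each $\Phi_m(\lambda)$ is bounded, $\Phi_m$ is simply measurable, and for $z=f^{1/2}(\lambda)x\in\domain(\Phi(\lambda))$ one has $\Phi_m(\lambda)z=\Psi(\lambda)\,f^{1/2}(\lambda)(f(\lambda)+m^{-1}\Id_{\cH_0})^{-1}f^{1/2}(\lambda)x\to\Psi(\lambda)x=\Phi(\lambda)z$, because $f^{1/2}(\lambda)(f(\lambda)+m^{-1}\Id_{\cH_0})^{-1}f^{1/2}(\lambda)$ converges strongly to the orthogonal projection onto $\overline{\range(f^{1/2}(\lambda))}$ and $\Psi(\lambda)$ annihilates the orthogonal complement. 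With that substitution (which also settles the domain-measurability point) the proof closes.
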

\subsection{Integration with respect to a random \cagos\ measure}\label{sec:cagos-integral}

We now define the mapping which provides a representation of the
normal Hilbert $\cL_b$-module module generated by a random \cagos\
measure in the form a module of square integrable operator functions.
It is often seen as a stochastic integral because it linearly and
continuously maps a function to a random variable.  Let $\cH_0$ and
$\cG_0$ be two separable Hilbert spaces, $(\Lambda,\calA)$ be a
measurable space, and let $\nu$ be a trace-class \povm\ defined on
$(\Lambda,\calA,\cH_0)$.  Given an $\cH_0$-valued random \cagos\
measure $W$, we further set
\begin{equation}
  \label{eq:W-domain-nu-def}
\cH^{W,\cG_0} := \cspanarg[\cG]{\aop W(A) \; : \;  \aop \in \cL_b(\cH_0, \cG_0), \, A \in \calA}\;,
\end{equation}
which is a submodule of $\cG:=\cM(\Omega, \cF, \cG_0, \PP)$.  
As in Proposition~13 in \cite[Secion~3.4]{yuichiro-multidim} and \cite[Theorem~6.9]{mandrekar-square-integrability}, we now define the integral of an
$\cH_0\to\cG_0$ operator-valued
function with respect to a random \cagos\ measure $W$ as a Gramian-isometry
from  the normal Hilbert $\cL_b(\cG_0)$-module
$\mathsf{L}^2(\Lambda, \calA, \cO(\cH_0, \cG_0), \nu_W)$ to
$\cH^{W,\cG_0}$. A detailed proof can be found in
\Cref{sec:proofs-preliminaries}.
\begin{theorem}\label{thm:integral-cagos}
  Let $(\Lambda, \calA)$ be a measurable space and $(\Omega, \cF, \PP)$ a
  probability space. Let $\cH_0$ and $\cG_0$ be two separable Hilbert
  spaces. Let $W$ be an $\cH_0$-valued random \cagos\ measure on
  $(\Lambda,\calA,\Omega,\cF,\PP)$ with intensity operator
  measure $\nu_W$. Let
  $\cH^{W,\cG_0}$ be defined as in~(\ref{eq:W-domain-nu-def}).
  Then there
  exists a unique Gramian-isometry
  $$
  I_W^{\cG_0} : \mathsf{L}^2(\Lambda, \calA, \cO(\cH_0, \cG_0), \nu_W) \to   \cM(\Omega, \cF, \cG_0, \PP)
  $$
  such that, for all $A \in \calA$ and $\aop \in \cL_b(\cH_0, \cG_0)$,
  $$
  I_W^{\cG_0}(\1_A \aop) = \aop W(A) \quad  \PP\text{-a.s.} 
  $$
  Moreover,  $ \mathsf{L}^2(\Lambda, \calA, \cO(\cH_0, \cG_0), \nu_W)$ and
  $\cH^{W,\cG_0}$ are Gramian-isometrically isomorphic.
\end{theorem}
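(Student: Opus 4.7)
The plan is to invoke Proposition~\ref{prop:gramian-isometric-extension} with domain $\cH = \mathsf{L}^2(\Lambda, \calA, \cO(\cH_0, \cG_0), \nu_W)$, which is a complete normal Hilbert $\cL_b(\cG_0)$-module by Theorem~\ref{thm:pseudo-gramian-L2-unbounded}, and codomain $\cG = \cM(\Omega, \cF, \cG_0, \PP)$, which is a normal Hilbert $\cL_b(\cG_0)$-module by Example~\ref{exple:functional-rv}. As indexing collections, I would take $J = \calA \times \cL_b(\cH_0, \cG_0)$ together with $v_{(A,\aop)} = \1_A \aop$ in $\cH$ and $w_{(A,\aop)} = \aop W(A)$ in $\cG$.

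The key step is to verify the Gramian-matching condition. On the domain side, the simple function $\1_A \aop$ lies in $L^2(\Lambda,\calA,\cL_b(\cH_0,\cG_0),\norm{\nu_W}_1)$, and by Remark~\ref{rem-tcpovm} (third item, together with the density $f = \rmd\nu_W/\rmd\norm{\nu_W}_1$ from Theorem~\ref{thm:radon-nikodym-povm}) one has
$$
\gramian{\1_A \aop}{\1_B \bop}_{\nu_W} = \int \1_{A \cap B}\, \aop\, f\, \bop^\adjoint \, \rmd \norm{\nu_W}_1 = \aop\, \nu_W(A \cap B)\, \bop^\adjoint .
$$
On the codomain side, the defining identity of the covariance operator on $\cM(\Omega,\cF,\cG_0,\PP)$ (Example~\ref{exple:functional-rv}) combined with the \cagos\ identity~\eqref{eq:cagos-basic-isometric-identity} applied inside $\cM(\Omega,\cF,\cH_0,\PP)$ yields
$$
\gramian{\aop W(A)}{\bop W(B)}_{\cG} = \aop\, \gramian{W(A)}{W(B)}_{\cM(\Omega,\cF,\cH_0,\PP)}\, \bop^\adjoint = \aop\, \nu_W(A \cap B)\, \bop^\adjoint ,
$$
so the two Gramians coincide.

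Proposition~\ref{prop:gramian-isometric-extension} therefore delivers a unique Gramian-isometry $I_W^{\cG_0}$, defined on the closed submodule of $\cH$ generated by the $v_{(A,\aop)}$'s, such that $I_W^{\cG_0}(\1_A \aop) = \aop W(A)$. Since for any $C \in \cL_b(\cG_0)$ the product $C \bullet (\1_A \aop) = \1_A (C \aop)$ is again a simple $\cL_b(\cH_0,\cG_0)$-valued function, this closed submodule coincides with the closure of the linear span of simple $\cL_b(\cH_0, \cG_0)$-valued functions, which by Theorem~\ref{thm:l2op-density}\ref{item:thm:l2op-density:ass1} is the whole of $\mathsf{L}^2(\Lambda, \calA, \cO(\cH_0, \cG_0), \nu_W)$. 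The completeness of the domain and the second conclusion of Proposition~\ref{prop:gramian-isometric-extension} then identify the image as the closed submodule of $\cG$ generated by the $w_{(A,\aop)}$'s, which is precisely $\cH^{W,\cG_0}$, so $I_W^{\cG_0}$ is Gramian-unitary between $\mathsf{L}^2(\Lambda, \calA, \cO(\cH_0, \cG_0), \nu_W)$ and $\cH^{W,\cG_0}$.

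The main point requiring care is the Gramian-matching computation: one must track how the operator $\aop \in \cL_b(\cH_0,\cG_0)$, which is not itself a module-action element on either side, intertwines the $\cL_b(\cH_0)$-module structure on $\cM(\Omega,\cF,\cH_0,\PP)$ with the $\cL_b(\cG_0)$-module structure on $\cM(\Omega,\cF,\cG_0,\PP)$, and likewise how the $\cL_b(\cG_0)$-action on $\mathsf{L}^2(\Lambda, \calA, \cO(\cH_0, \cG_0), \nu_W)$ is compatible with the one on $\cM(\Omega,\cF,\cG_0,\PP)$ under the assignment $\1_A \aop \mapsto \aop W(A)$. Once these identities are in place, the Gramian-isometric extension proposition delivers density of the simple functions, the isometry, and the image description in a single step.
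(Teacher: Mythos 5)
Your proposal is correct and follows essentially the same route as the paper's proof: the same choice of index set $J = \calA \times \cL_b(\cH_0,\cG_0)$ with $v_{(A,\aop)} = \1_A\aop$ and $w_{(A,\aop)} = \aop W(A)$, the same Gramian-matching computation $\aop\,\nu_W(A\cap B)\,\bop^\adjoint$ on both sides, and the same appeal to \Cref{prop:gramian-isometric-extension} together with the density result of \Cref{thm:l2op-density} to identify the domain as all of $\mathsf{L}^2(\Lambda,\calA,\cO(\cH_0,\cG_0),\nu_W)$ and the range as $\cH^{W,\cG_0}$. Your observation that $C\bullet(\1_A\aop)=\1_A(C\aop)$ remains simple plays exactly the role of the paper's identity $\cL_b(\cH_0,\cG_0)=\set{\bop\aop}{\aop\in\cL_b(\cH_0,\cG_0),\,\bop\in\cL_b(\cG_0)}$.
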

We can now define the integral of an operator-valued function with
respect to $W$.
\begin{definition}[Integral with respect to  a random \cagos\ measure]\label{def:integral-cagos}
Under the assumptions of \Cref{thm:integral-cagos}, we use an
integral sign to denote $I_W^{\cG_0}(\Phi)$ for
$\Phi\in \mathsf{L}^2(\Lambda, \calA, \cO(\cH_0, \cG_0), \nu_W)$. Namely, we
write
\begin{equation}\label{eq:integral-cagos}
  \int \Phi \, \rmd W = \int \Phi(\lambda) \, W(\rmd \lambda) := I_W^{\cG_0}(\Phi)\;.
\end{equation}
\end{definition}
The following remark will be useful.
\begin{remark}
  \label{rem:scalar-int-cagos}
  In the setting of \Cref{def:integral-cagos}, take
  $\Phi=\phi\,\Id_{\cH_0}$ with $\phi:\Lambda\to\cset$. Then, we have 
  $\Phi\in \mathsf{L}^2(\Lambda, \calA, \cO(\cH_0), \nu_W)$ if and
  only if $\phi\in L^2(\Lambda, \calA,\norm{\nu_W}_1)$. We will omit
  $\Id_{\cH_0}$ in the notation of the integral, writing $\int
  \phi\;\rmd W$ for  $\int
  \phi\Id_{\cH_0}\;\rmd W$.
\end{remark}
\section{Modular spectral domain of a weakly stationary  process and applications}\label{sec:spectral-analysis}

\subsection{The Gramian-Cramér representation and general Bochner theorems}
\label{sec:gram-cram-bochn}
We now have all the tools to derive a spectral theory for
Hilbert-valued weakly stationary processes following
\cite[Section~4.2]{yuichiro-multidim}. Let $(\Omega, \cF, \PP)$ be a
probability space, $\cH_0$ be a separable Hilbert space and
$(\lcag,+)$ be a locally compact Abelian (\lca) group (for example
$\zset$ or $\rset$), whose null element is denoted by 0. This means
that $\lcag$ is an Abelian topological group which is locally compact,
Hausdorff for its topology. Recall that the dual group $\hat\lcag$
denotes the set of continuous characters of $\lcag$ i.e. the set of
continuous functions $\chi : \lcag \to \unitcircle$ satisfying
$s,t \in \lcag$, $\chi(s+t) = \chi(s) \chi(t)$, where $\unitcircle$
denotes the complex unit circle. In particular,
$\hat{\zset} = \tore := \rset/(2\pi\zset)$ and 
$\hat{\rset} = \rset$. Details about \lca\ groups can be found in
\cite{rudin1990fourier}. Throughout this section we are interested in
the spectral properties of a centered process valued in a separable
Hilbert space and assumed to be weakly stationary in the following
sense.
\begin{definition}[Hilbert-valued weakly stationary processes]\label{def:functional-weakstat}
   Let $(\Omega, \cF, \PP)$ be a probability space, $\cH_0$ be a separable Hilbert
   space and $(\lcag,+)$ be an \lca\ group. Then a process $X := (X_t)_{t \in \lcag}$ is
   said to be an $\cH_0$-valued weakly stationary process if
\begin{enumerate}[label=(\roman{enumi}),resume=def-functional-weakstat]
\item\label{item:def:functional-weakstat1} For all $t \in \lcag$, $X_t \in L^2(\Omega, \cF, \cH_0, \PP)$.
\item\label{item:def:functional-weakstat2} For all $t \in \lcag$, $\PE[X_t] = \PE[X_0]$. We say that $X$ is centered if $\PE[X_0] = 0$.
\item\label{item:def:functional-weakstat3}  For all $t,h \in \lcag$, $\cov{X_{t+h}}{X_{t}} = \cov{X_h}{X_0}$.
\item\label{item:cov-op-continuity} The \emph{autocovariance operator function}
  $\Gamma_X : h \mapsto \cov{X_h}{X_0}$ satisfies the following
  continuity condition: for all
  $\aop \in \cL_b(\cH_0)$, $h \mapsto \tr(\aop \Gamma_X(h))$ is
  continuous on $\lcag$.
\end{enumerate}
\end{definition}
In the case of time series, $\lcag=\zset$, all mappings on $\lcag$ are
continuous and Condition~\ref{item:cov-op-continuity} can be discarded
in this definition.  It is less trivial to show that, for any \lca\
group $\lcag$, we get an equivalent definition if we
replace~\ref{item:cov-op-continuity} by just saying that $\Gamma_X$ is
continuous in \wot\ This interesting fact is explained in the
following remark in a more detailed fashion.
\begin{remark}
  \label{rem:cont-cond-def-weak-stat}
For any
  $x,y\in\cH_0$, taking $\aop=xy^\adjoint$ we have
  $\tr(\aop \Gamma_X(h))=\pscal{\Gamma_X(h)x}{y}_{\cH_0}$. Hence
  Condition~\ref{item:cov-op-continuity} of
  \Cref{def:functional-weakstat} implies the following one.
\begin{enumerate}[label=(\roman{enumi}'),resume=def-functional-weakstat]\addtocounter{enumi}{-1}
\item\label{item:cov-op-continuity-prim}   The autocovariance operator function
  $\Gamma_X : h \mapsto \cov{X_h}{X_0}$ is continuous  in \wot
\end{enumerate}
It is easy to find $\lcag$, $\cH_0$ and a mapping $f:\lcag\to\cS_1(\cH_0)$ which is
continuous in \wot\ but such that $h \mapsto \tr(f(h))$ is not
continuous hence does not satisfy the continuity condition imposed on
$\Gamma_X$ in~\ref{item:cov-op-continuity}.
However, it turns out that if $\Gamma_X$ is the
autocovariance operator function $h\mapsto \cov{X_h}{X_0}$ with $X$
satisfying Conditions~\ref{item:def:functional-weakstat1}
and~\ref{item:def:functional-weakstat3}, then
Conditions~\ref{item:cov-op-continuity}
and~\ref{item:cov-op-continuity-prim} become equivalent.  The reason
behind this surprising fact will be made clear later in Point
\ref{item:rem:wot-vs-wc1} of \Cref{rem:wot-vs-wc1}. In other words, we
can replace~\ref{item:cov-op-continuity}
by~\ref{item:cov-op-continuity-prim} without altering
\Cref{def:functional-weakstat}.
\end{remark}
As in the univariate case, the notion of weak stationarity is related
to an isometric property of the lag operators, but here the
covariance stationarity expressed in
Condition~\ref{item:def:functional-weakstat3} translates into a
Gramian-isometric property rather than a scalar isometric property.
Namely, let $X:=(X_t)_{t \in \lcag}$ satisfy
Conditions~\ref{item:def:functional-weakstat1}
and~\ref{item:def:functional-weakstat2} and take it centered so that
each $X_t$ belongs to the normal Hilbert module
$\cM(\Omega, \cF, \cH_0, \PP)$ as defined in
\Cref{exple:functional-rv}.  For all $h \in \lcag$, define the lag
operator of lag $h\in\lcag$ as the mapping
$U_h^X :X_t\mapsto X_{t + h}$ defined for all $t \in \lcag$. Then
Condition~\ref{item:def:functional-weakstat3} is equivalent to saying
that for all $h\in\lcag$, the mapping $U_h^X$ is Gramian-isometric on
$\set{X_t}{t\in\lcag}$ for
the Gramian structure inherited from $\cM(\Omega, \cF, \cH_0, \PP)$.
Thus, if this condition holds,  by
\Cref{prop:gramian-isometric-extension}, for any lag $h\in\lcag$, there exists a unique
Gramian-unitary operator extending $U_h^X$ on the \emph{modular time domain} $\cH^X$ of
$X$ defined as
the submodule of $\cH$ generated by the $X_t$'s, that is,
$$
\cH^X := \cspanarg[\cH]{\aop X_t \; : \; \aop \in \cL_b(\cH_0),  \, t \in \lcag} \; ,
$$
which is the generalization of (\ref{eq:ts-cHX}) to a general \lca\
group $\lcag$. In fact it is convenient to introduce a slightly more
general definition of the modular time domain where the output Hilbert
space $\cG_0$ may be taken different form $\cH_0$.
\begin{definition}[$\cG_0$-valued modular time domain]
Let $(\lcag,+)$ be an \lca\ group, and  $\cH_0$ and $\cG_0$ be two
separable Hilbert spaces. Let 
   $X:=(X_t)_{t \in \lcag}$ be a collection of variables in
$\cM(\Omega, \cF, \cH_0, \PP)$ as defined in
\Cref{exple:functional-rv}. 
The $\cG_0$-valued \emph{modular time domain} of $X$ is defined by
  \begin{equation}
    \label{eq:time-domain-def}
\cH^{X,\cG_0} := \cspanarg[\cM(\Omega, \cF, \cG_0, \PP)]{\aop X_t \; : \;  \aop \in \cL_b(\cH_0, \cG_0), \, t \in \lcag}\;, 
\end{equation}
which is a submodule of $\cM(\Omega, \cF, \cG_0, \PP)$.
\end{definition}
We now extend the (scalar) Cramér representation theorem by means of an
integral with respect to a \cagos\ measure.
\begin{theorem}[Gramian-Cramér representation theorem]\label{thm:spectral-analysis-hilbert}
  Let $\cH_0$ be a separable Hilbert space, $(\Omega, \cF, \PP)$ be a
  probability space and $(\lcag,+)$ be an \lca\, group.  Let
  $X:=(X_t)_{t \in \lcag}$ be a centered weakly stationary
  $\cH_0$-valued process as in \Cref{def:functional-weakstat}. Then 
  there exists a unique regular $\cH_0$-valued random \cagos\ measure
  $\hat{X}$ on $(\hat{\lcag}, \borel(\hat{\lcag}), \Omega,\cF,\PP)$
  such that
  \begin{equation}\label{eq:spectralrep-functional}
    X_t = \int \chi(t) \; \hat{X}(\rmd \chi)\quad\text{for all}\quad t \in \lcag\;.
  \end{equation}
\end{theorem}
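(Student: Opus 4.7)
The plan is to construct $\hat{X}$ as the image of indicator functions under a Gramian-unitary operator between a suitable modular spectral domain and the modular time domain $\cH^X$. This rests on three ingredients already prepared in the preliminaries: an operator-valued Herglotz-type representation of the autocovariance function $\Gamma_X$, the Gramian-isometric extension result \Cref{prop:gramian-isometric-extension}, and the density theorem \Cref{thm:l2op-density}.

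The first step is an operator-valued Bochner/Herglotz theorem: there exists a unique regular trace-class \povm\ $\nu$ on $(\hat{\lcag},\borel(\hat{\lcag}),\cH_0)$ such that $\Gamma_X(h)=\int\chi(h)\,\nu(\rmd\chi)$ for every $h\in\lcag$. I would obtain this by applying the classical scalar Bochner theorem to each continuous positive-definite function $h\mapsto x^\adjoint\Gamma_X(h)x$ on $\lcag$ to produce a non-negative regular measure $\nu_x$ on $\hat{\lcag}$, recovering the sesquilinear forms $x^\adjoint\nu(\cdot)y$ by polarization, and using the uniform control $\nu_x(\hat{\lcag})=x^\adjoint\Gamma_X(0)x$ with $\Gamma_X(0)\in\cS_1^+(\cH_0)$ to assemble a \povm\ $\nu$ with $\nu(\hat{\lcag})=\Gamma_X(0)$. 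By \Cref{lem:traceclasspovm}, $\nu$ is then automatically trace-class, and regularity of the scalar marginals transfers to regularity of $\nu$.

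The second step produces the Gramian-unitary. A direct computation gives, for all $s,t\in\lcag$ and $\aop,\bop\in\cL_b(\cH_0)$,
$$\gramian{\aop X_s}{\bop X_t}_{\cM(\Omega,\cF,\cH_0,\PP)}=\aop\,\Gamma_X(s-t)\,\bop^\adjoint=\aop\left(\int\chi(s)\overline{\chi(t)}\,\nu(\rmd\chi)\right)\bop^\adjoint=\gramian{\rme_s\aop}{\rme_t\bop}_\nu,$$
the last Gramian being computed in $\mathsf{L}^2(\hat{\lcag},\borel(\hat{\lcag}),\cO(\cH_0),\nu)$ with $\rme_t:\chi\mapsto\chi(t)$ the Pontryagin-dual character of $t$. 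By \Cref{prop:gramian-isometric-extension} this identity extends uniquely to a Gramian-isometric operator $I$ sending $\rme_t\aop$ to $\aop X_t$ on the closed submodule generated by the $\rme_t\aop$'s into $\cH^X$. Because the $\rme_t$'s separate points of $\hat{\lcag}$ and are closed under complex conjugation, Stone--Weierstrass together with regularity of $\norm{\nu}_1$ yields that $\lspan{\rme_t : t\in\lcag}$ is dense in $L^2(\hat{\lcag},\borel(\hat{\lcag}),\norm{\nu}_1)$; \Cref{thm:l2op-density}~\ref{item:thm:l2op-density:ass2} then promotes this to density in the full modular spectral domain, making $I$ Gramian-unitary onto $\cH^X$.

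Finally, set $\hat{X}(A):=I(\1_A\,\Id_{\cH_0})$. Gramian-orthogonality is immediate from $\gramian{\1_A\Id}{\1_B\Id}_\nu=\nu(A\cap B)=0$ when $A\cap B=\emptyset$, while countable additivity follows from continuity of $I$ and the $\norm{\cdot}_\nu$-convergence of partial sums of disjoint indicators, controlled by the finite measure $\norm{\nu}_1$. The intensity operator measure of $\hat{X}$ is $\nu$, which is regular by Step~1, so $\hat{X}$ is a regular random \cagos\ measure. The representation~\eqref{eq:spectralrep-functional} follows from $I(\rme_t\Id_{\cH_0})=X_t$ by construction, combined with the identity $\int\chi(t)\,\hat{X}(\rmd\chi)=I_{\hat{X}}^{\cH_0}(\rme_t\Id_{\cH_0})=I(\rme_t\Id_{\cH_0})$ obtained from the uniqueness clause of \Cref{thm:integral-cagos} (both operators coincide on simple functions $\1_A\aop$). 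Uniqueness of $\hat{X}$ follows because any other candidate must share the same intensity operator measure $\nu$ via the Herglotz identity and must then agree with $\hat{X}$ on the generating family of Borel indicators. The chief obstacle is Step~1: patching the scalar Bochner measures $(\nu_x)_{x\in\cH_0}$ into a genuine \povm\ with the correct measurability and trace-class properties — once this is done, the rest is a structured assembly of the preliminary apparatus.
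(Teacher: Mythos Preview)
Your argument is correct and takes a genuinely different route from the paper's. The paper applies a module version of Stone's theorem to the group $(U_h^X)_{h\in\lcag}$ of Gramian-unitary lag operators on $\cH^X$, obtaining a projection-valued \povm\ $\xi^X$ on $(\hat{\lcag},\borel(\hat{\lcag}),\cH^X)$ and then setting $\hat{X}(A):=\xi^X(A)X_0$; the spectral operator measure $\nu_X$ and the Bochner/Herglotz identity for $\Gamma_X$ are then \emph{consequences} of this construction (\Cref{cor:general-bochn}). You reverse the order: you first secure the trace-class \povm\ $\nu$ representing $\Gamma_X$ and then build the Gramian-unitary $I:\mathsf{L}^2(\hat{\lcag},\borel(\hat{\lcag}),\cO(\cH_0),\nu)\to\cH^X$ directly from \Cref{prop:gramian-isometric-extension} and the density result \Cref{thm:l2op-density}, finally reading $\hat{X}$ off as $I(\1_A\Id_{\cH_0})$.

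What each approach buys: the paper's route is quick once the module Stone theorem is available, and it makes the spectral measure of the unitary group the primary object, from which everything else (including Bochner) flows. Your route avoids that abstract machinery entirely and is closer in spirit to the classical scalar and finite-dimensional proofs; it also makes the Kolmogorov isomorphism of \Cref{thm:kolmo-isomorphism-thm} an immediate by-product rather than a separate statement. The price you pay is exactly the one you identify: your Step~1 is essentially the implication \ref{itm:hnnd}$\Rightarrow$\ref{itm:fourier} of \Cref{thm:bochnerop} specialized to $\Gamma_X(0)\in\cS_1(\cH_0)$, which the paper imports wholesale from Naimark--Berberian. Your patching sketch (scalar Bochner plus polarization plus the trace-class bound) is the right outline, but it does need the boundedness of the sesquilinear form $(x,y)\mapsto\nu_{x,y}(A)$ to be checked uniformly in $A$. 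For uniqueness, your sentence is terse but completable: once $\nu_W=\nu$ by Bochner uniqueness, the two Gramian-isometries $I_W^{\cH_0}$ and $I_{\hat X}^{\cH_0}$ agree on the dense submodule generated by $\{\rme_t\aop\}$ (your Step~3), hence on $\1_A\Id_{\cH_0}$. The paper instead approximates $\1_A$ by Fourier transforms of $L^1$ functions and uses the Fubini-type \Cref{prop:fubini-bochner-cagos}, without first identifying the two intensity measures.
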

This result is partly stated in Theorem~2 in
\cite[Section~4.2]{yuichiro-multidim}. Here we add the uniqueness of
$\hat{X}$, which
appears to be a new result in this general setting. We provide a 
detailed proof in \Cref{sec:proofs-spectral-rep}. In fact Theorem~2
 in \cite[Section~4.2]{yuichiro-multidim} contains a converse
 statement, which we now state separately as a lemma whose detailed proof can be found in \Cref{sec:proofs-spectral-rep}.
\begin{lemma}
\label{lem:cramer-easy}  Let $(\lcag,+)$ be an \lca\ group, $\cH_0$ a separable Hilbert space
  and $W$ be an $\cH_0$-valued random \cagos\ measure on
  $(\hat{\lcag},\borel(\hat{\lcag}),\Omega,\cF,\PP)$ with intensity
  operator measure $\nu$. Define, for all $t\in\lcag$,
  $$
  X_t=\int \chi(t)\;W(\rmd\chi) \;.
  $$
  Then $X=(X_t)_{t\in\lcag}$ is a centered $\cH_0$-valued weakly
  stationary process with autocovariance operator function $\Gamma$
  defined by
    \begin{equation}\label{eq:traceclass-bochner-general}
    \Gamma(h) = \int \chi(h) \, \nu(\rmd \chi)\quad\text{for all  $h \in \lcag$.}
  \end{equation}
\end{lemma}
With \Cref{thm:spectral-analysis-hilbert} at our disposal, we can now
define the \emph{Gramian-Cramér  representation} and the \emph{spectral
  operator measure} of $X$. 
\begin{definition}[Gramian-Cramér  representation and spectral operator measure]
  \label{def:spectral-analysis-hilbert}
  Under the setting of \Cref{thm:spectral-analysis-hilbert}, the regular
  \cagos\ measure $\hat{X}$ is called the \emph{(Gramian) Cramér
    representation} of $X$ and its intensity operator measure is
  called the \emph{spectral operator measure} of $X$. It is a regular
  trace-class \povm\ on $(\hat{\lcag}, \borel(\hat{\lcag}), \cH_0)$.
\end{definition}
By \Cref{lem:cramer-easy}, we see that the autocovariance operator function and the
spectral operator measure of $X$ are related to each other through
the identity~(\ref{eq:traceclass-bochner-general}). As already hinted in the introduction,
using the tools introduced in \Cref{sec:cagos-integral}, we can more
generally interpret the Cramér representation of
\Cref{thm:spectral-analysis-hilbert} as establishing a
Gramian-isometric mapping onto the modular time domain of $X$,
starting from its \emph{modular spectral domain} which we now
introduce.
\begin{definition}[$\cG_0$-valued spectral time domain]
  \label{def:modular-scptral-domain}
  Let $\cH_0$ and $\cG_0$ be two
separable Hilbert spaces and 
   $X:=(X_t)_{t \in \lcag}$ be a centered weakly stationary process
   valued in $\cH_0$ as in \Cref{def:functional-weakstat}. 
The
  $\cG_0$-valued \emph{modular spectral domain} of $X$  is the normal
  Hilbert $\cL_b(\cG_0)$-module defined  by
  \begin{equation}
    \label{eq:spectral-domain-def}
  \widehat{\cH}^{X,\cG_0} :=   \mathsf{L}^2(\hat{\lcag}, \borel(\hat{\lcag}), \cO(\cH_0, \cG_0), \nu_X)\;,      
\end{equation}
where $\nu_X$ is the spectral operator measure of $X$ introduced in \Cref{def:spectral-analysis-hilbert}.
\end{definition}
We can now state that the modular time and spectral domain  are
Gramian-isometrically isomorphic, whose proof can be found in \Cref{sec:proofs-spectral-rep}.
\begin{theorem}[Kolmogorov isomorphism theorem]
\label{thm:kolmo-isomorphism-thm}Under the setting of \Cref{thm:spectral-analysis-hilbert}, for any separable Hilbert space $\cG_0$,  the mapping
$I_{\hat{X}}^{\cG_0} : \Phi\mapsto\int \Phi\;\rmd\hat{X}$ is a Gramian-unitary operator from
$\widehat{\cH}^{X,\cG_0}$ to $\cH^{X,\cG_0}$
 and we have
$\cH^{X,\cG_0} =\cH^{\hat{X},\cG_0}$. Thus, the
$\cG_0$-valued \emph{modular time domain} $\cH^{X,\cG_0}$ and the
$\cG_0$-valued \emph{modular spectral domain} $\widehat{\cH}^{X,\cG_0}$ 
are \emph{Gramian-isometrically isomorphic}. 
\end{theorem}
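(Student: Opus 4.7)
The plan is to reduce the whole statement to the single set equality $\cH^{X,\cG_0}=\cH^{\hat{X},\cG_0}$. Indeed, by \Cref{thm:integral-cagos} applied to the random \cagos\ measure $W=\hat{X}$ provided by \Cref{thm:spectral-analysis-hilbert}, the mapping $I_{\hat{X}}^{\cG_0}$ is already known to be Gramian-unitary from $\widehat{\cH}^{X,\cG_0}=\mathsf{L}^2(\hat{\lcag},\borel(\hat{\lcag}),\cO(\cH_0,\cG_0),\nu_X)$ onto the submodule $\cH^{\hat{X},\cG_0}$ of $\cM(\Omega,\cF,\cG_0,\PP)$, so once we identify $\cH^{\hat{X},\cG_0}$ with $\cH^{X,\cG_0}$ the theorem follows at once.

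To check the inclusion $\cH^{X,\cG_0}\subset\cH^{\hat{X},\cG_0}$, I would first establish the identity
$$
I_{\hat{X}}^{\cG_0}(\rme_t\,\aop)=\aop X_t\quad\text{for every }t\in\lcag\text{ and }\aop\in\cL_b(\cH_0,\cG_0),
$$
where $\rme_t\aop:\chi\mapsto\chi(t)\aop$. Since $\rme_t$ is $\unitcircle$-valued and $\norm{\nu_X}_1$ is finite, one has $\rme_t\in L^2(\hat{\lcag},\borel(\hat{\lcag}),\norm{\nu_X}_1)$; approximating it by simple scalar functions $\phi_n$ in this $L^2$-space gives $\phi_n\aop\to\rme_t\aop$ in $\widehat{\cH}^{X,\cG_0}$ via \Cref{prop:charac-L2O}. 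On each simple $\phi_n=\sum_i\alpha_i\1_{A_i}$, \Cref{thm:integral-cagos} yields $I_{\hat{X}}^{\cG_0}(\phi_n\aop)=\sum_i\alpha_i\,\aop\,\hat{X}(A_i)=\aop\cdot I_{\hat{X}}^{\cH_0}(\phi_n\Id_{\cH_0})$, and passing to the limit using the continuity of $I_{\hat{X}}^{\cG_0}$, of $I_{\hat{X}}^{\cH_0}$, of the bounded operator $\aop$, and the Gramian-Cramér representation~\eqref{eq:spectralrep-functional}, identifies the limit with $\aop X_t$. Taking closed submodules delivers the inclusion.

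For the reverse inclusion I would apply \Cref{thm:l2op-density}\,\ref{item:thm:l2op-density:ass2} with $E:=\{\rme_t:t\in\lcag\}$. Granting that $E$ is linearly dense in $L^2(\hat{\lcag},\borel(\hat{\lcag}),\norm{\nu_X}_1)$, the submodule $\lspan{\rme_t\aop:t\in\lcag,\,\aop\in\cL_b(\cH_0,\cG_0)}$ is dense in $\widehat{\cH}^{X,\cG_0}$, so by the previous step and the Gramian-unitarity of $I_{\hat{X}}^{\cG_0}$ its image $\lspan{\aop X_t:t\in\lcag,\,\aop\in\cL_b(\cH_0,\cG_0)}$ is dense in $\cH^{\hat{X},\cG_0}$. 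Since $\cH^{X,\cG_0}$ is by definition the closure of that span in $\cM(\Omega,\cF,\cG_0,\PP)$, this yields $\cH^{\hat{X},\cG_0}\subset\cH^{X,\cG_0}$.

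The main obstacle is the density of $E=\{\rme_t:t\in\lcag\}$ in $L^2(\hat{\lcag},\borel(\hat{\lcag}),\norm{\nu_X}_1)$. I would derive it from Pontryagin duality and the injectivity of the Fourier–Stieltjes transform on the \lca\ group $\hat{\lcag}$: any $f\in L^2(\norm{\nu_X}_1)$ orthogonal to every $\rme_t$ produces, via $\rmd\mu:=f\,\rmd\norm{\nu_X}_1$, a finite complex Borel measure on $\hat{\lcag}$ whose Fourier–Stieltjes transform $t\mapsto\int\rme_t\,\rmd\mu$ vanishes identically on $\lcag$, which is the dual of $\hat{\lcag}$ by Pontryagin duality; uniqueness of the Fourier–Stieltjes transform forces $\mu=0$, hence $f=0$ $\norm{\nu_X}_1$-a.e. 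Equivalently, the Stone–Weierstrass theorem applied to the conjugation-closed, unital, point-separating algebra $\lspan E$ on compact subsets of $\hat{\lcag}$, combined with regularity of $\norm{\nu_X}_1$ inherited from that of $\hat{X}$ via \Cref{lem:traceclasspovm}, gives the same conclusion.
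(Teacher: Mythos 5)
Your proposal is correct and follows the paper's overall structure for the reduction and for the inclusion $\cH^{X,\cG_0}\subset\cH^{\hat{X},\cG_0}$: like the paper, you invoke \Cref{thm:integral-cagos} to get that $I_{\hat{X}}^{\cG_0}$ is Gramian-unitary onto $\cH^{\hat{X},\cG_0}$ and then identify $I_{\hat{X}}^{\cG_0}(\rme_t\aop)=\aop X_t$ (the paper states this directly from~\eqref{eq:spectralrep-functional}; your approximation by simple scalar functions is the honest way to justify pulling $\aop$ through the integral, and matches an argument the paper itself uses in the proof of \Cref{prop:compo-inversion-filters-cagos}). Where you genuinely diverge is the converse inclusion $\cH^{\hat{X},\cG_0}\subset\cH^{X,\cG_0}$. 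The paper gets it for free from the explicit construction $\hat{X}(A)=\xi^X(A)X_0$ in~\eqref{eq:def:hatX-stone}, since $\xi^X$ is valued in projections onto closed submodules of $\cH^X$; this is short but leans on the internal details of the proof of \Cref{thm:spectral-analysis-hilbert}. You instead prove that the characters $\{\rme_t\}_{t\in\lcag}$ are linearly dense in $L^2(\hat{\lcag},\borel(\hat{\lcag}),\norm{\nu_X}_1)$ and apply \Cref{thm:l2op-density}~\ref{item:thm:l2op-density:ass2}. This is a valid alternative, and it has the merit of using only the \emph{statement} of the Gramian--Cram\'er theorem (representation plus regularity of $\hat X$), so it would apply verbatim to any regular \cagos\ measure satisfying~\eqref{eq:spectralrep-functional}; the price is that you must supply the density lemma. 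Your Fourier--Stieltjes route for that lemma is sound: for $f\in L^2(\norm{\nu_X}_1)$ orthogonal to all $\rme_t$, the complex measure $\bar f\,\rmd\norm{\nu_X}_1$ is finite and inherits regularity from $\norm{\nu_X}_1$ (which is regular because $\hat X$ is a regular \cagos\ measure), so the uniqueness theorem for Fourier--Stieltjes transforms on the \lca\ group $\hat{\lcag}$ (with dual $\lcag$ by Pontryagin) forces $f=0$ a.e.; this is close in spirit to the density fact the paper itself invokes from \cite[Theorem~1.2.4, Section~E.8]{rudin1990fourier} in the uniqueness part of \Cref{thm:spectral-analysis-hilbert}. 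The Stone--Weierstrass variant you sketch would need a little more care (passing from uniform approximation on compacts to $L^2$ approximation of indicators), so I would keep the Fourier--Stieltjes argument as the primary one.
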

Relation~(\ref{eq:traceclass-bochner-general}) is at the core of the
general Bochner theorem, which we now discuss. Recall that the \emph{standard}
(univariate) Bochner theorem can be stated as follows (see
\cite[Theorem~1.4.3]{rudin1990fourier} for existence and
\cite[Theorem~1.3.6]{rudin1990fourier} for uniqueness).
\begin{theorem}[Bochner Theorem]
  \label{them:bochner-stadard} Let $(\lcag,+)$ be an \lca\ group and $\gamma : \lcag \to \cset$. Then the two following statements are equivalent:
\begin{enumerate}[label=(\roman*)]
  \item\label{item:bochner-univariate1} $\gamma$ is continuous and
    hermitian non-negative definite, that is, for all $n \in \nset$, $t_1, \cdots, t_n \in \lcag$ and $a_1, \cdots, a_n \in \cset$,
  $$
  \sum_{i, j = 1}^n  a_i \overline{a_j} \gamma(t_i-t_j) \geq 0.
  $$
 \item\label{item:bochner-univariate2} There exists a regular finite non-negative measure $\nu$ on $(\hat{\lcag}, \borel(\hat{\lcag}))$ such that
  \begin{equation}\label{eq:bochner}
  \gamma(h) = \int \chi(h) \, \nu(\rmd \chi), \quad h \in \lcag.
\end{equation}
\end{enumerate}
Moreover, if Assertion~\ref{item:bochner-univariate2} holds, $\nu$ is
the unique  regular non-negative measure satisfying \eqref{eq:bochner}. 
\end{theorem}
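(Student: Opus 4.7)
The direction \ref{item:bochner-univariate2}$\Rightarrow$\ref{item:bochner-univariate1} is a direct computation. Continuity of $\gamma$ follows from dominated convergence with dominating constant $\nu(\hat\lcag)<\infty$ since $|\chi(h)|=1$ and $h\mapsto\chi(h)$ is continuous for every $\chi\in\hat\lcag$. For hermitian non-negative definiteness, Fubini gives
$$
\sum_{i,j=1}^{n} a_i\overline{a_j}\,\gamma(t_i-t_j) \;=\; \int_{\hat{\lcag}}\Big|\sum_{i=1}^{n}a_i\chi(t_i)\Big|^2\,\nu(\rmd\chi) \;\geq\; 0\;,
$$
which also yields the hermitian relation $\gamma(-h)=\overline{\gamma(h)}$ as a special case.

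The reverse implication is the substantive content. The plan is to reduce it to the Gramian–Cramér theorem (\Cref{thm:spectral-analysis-hilbert}) specialized to $\cH_0=\cset$. In this scalar setting one has $\cL_b^+(\cset)=\cS_1^+(\cset)=\Real_+$, so a trace-class \povm\ on $(\hat\lcag,\borel(\hat\lcag),\cset)$ is exactly a regular finite non-negative measure on $\hat\lcag$, and \ref{item:cov-op-continuity} of \Cref{def:functional-weakstat} collapses to the continuity of $\gamma$ since $\cL_b(\cset)=\cset$. To invoke the theorem I must exhibit a centered $\cset$-valued weakly stationary process $X=(X_t)_{t\in\lcag}$ with $\cov{X_h}{X_0}=\gamma(h)$. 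Since $\gamma$ is hermitian non-negative definite, the matrices $[\gamma(t_i-t_j)]_{1\leq i,j\leq n}$ are hermitian non-negative, and Kolmogorov's consistency theorem yields on some probability space $(\Omega,\cF,\PP)$ a centered complex Gaussian process with this covariance kernel. Applying \Cref{thm:spectral-analysis-hilbert} produces a regular random \cagos\ measure $\hat X$ on $(\hat\lcag,\borel(\hat\lcag),\Omega,\cF,\PP)$, and its intensity operator measure $\nu$, read off through~\eqref{eq:traceclass-bochner-general} of \Cref{lem:cramer-easy} with $\cH_0=\cset$, is precisely the desired regular finite non-negative measure satisfying~\eqref{eq:bochner}.

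For uniqueness, assume $\nu_1,\nu_2$ are two regular finite non-negative measures on $\hat\lcag$ both satisfying~\eqref{eq:bochner}. Then the finite complex Borel measure $\sigma:=\nu_1-\nu_2$ has vanishing inverse Fourier transform, i.e.\ $\int\chi(t)\,\sigma(\rmd\chi)=0$ for every $t\in\lcag$, and the standard injectivity of the inverse Fourier transform for finite regular measures on an \lca\ group (which is exactly the uniqueness part of \cite[Theorem~1.3.6]{rudin1990fourier} already cited in the statement) forces $\sigma=0$.

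The main obstacle is really the existence step: without the Gramian–Cramér machinery, one would have to imitate a GNS/Stone–Naimark–Ambrose–Godement construction from scratch. Granting \Cref{thm:spectral-analysis-hilbert} (whose proof in \Cref{sec:proofs-spectral-rep} goes through a spectral theorem for the continuous unitary representation $h\mapsto U_h^X$, hence is not circular with Bochner), the existence part of Bochner's theorem becomes an almost formal corollary, the only genuine work being the Gaussian construction that feeds the Cramér representation.
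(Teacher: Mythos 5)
The paper does not actually prove this statement: it is recalled as a classical input, with existence cited from \cite[Theorem~1.4.3]{rudin1990fourier} and uniqueness from \cite[Theorem~1.3.6]{rudin1990fourier}, and it is then \emph{used} downstream, for instance to obtain the uniqueness part of \Cref{thm:bochnerop} by applying it to the scalar measures $\nu_x:A\mapsto x^\adjoint\nu(A)x$. Your direction \ref{item:bochner-univariate2}$\Rightarrow$\ref{item:bochner-univariate1} and your uniqueness argument are correct and are exactly what those references deliver (no Fubini is needed for the quadratic-form identity, only linearity; and the continuity of $h\mapsto\gamma(h)$ by ``dominated convergence'' needs a word of care on a non-metrizable $\lcag$, where sequences do not detect continuity --- the clean statement is the uniform continuity of the Fourier--Stieltjes transform of a finite regular measure, again from \cite[Section~1.3]{rudin1990fourier}).

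The genuine gap is in the existence direction. You derive it from \Cref{thm:spectral-analysis-hilbert} and assert, without justification, that this is ``not circular with Bochner'' because that theorem is proven via a spectral theorem for the unitary representation $h\mapsto U_h^X$. The paper's own text undercuts this: the proof of \Cref{thm:spectral-analysis-hilbert} in \Cref{sec:proofs-spectral-rep} rests on the generalized Stone (SNAG) theorem quoted from \cite{yuichiro-multidim}, and \Cref{sec:comp-with-recent} records that the implication \ref{itm:hnnd}$\Rightarrow$\ref{itm:fourier} of \Cref{thm:bochnerop} ``appears commonly as an ingredient of the proof of Stone's theorem'' \cite{ambrose1944,salehi-stone,fillmore1970notes}. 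The scalar Bochner theorem is the base case of that implication (Berberian's argument, like the paper's uniqueness argument for \Cref{thm:bochnerop}, reduces the operator statement to the scalar one via the measures $\nu_x$). So in the logical architecture of this paper and its references one has Stone $\Leftarrow$ operator Bochner $\Leftarrow$ scalar Bochner, and deducing scalar Bochner from the Gramian--Cram\'er representation closes a circle. To legitimize your route you would have to supply a proof of the SNAG theorem that bypasses positive-definite functions entirely (e.g.\ via the Gelfand theory of $L^1(\lcag)$ and the spectral theorem for commutative C*-algebras) --- but that is precisely the machinery by which Bochner's theorem is proved directly in \cite{rudin1990fourier}, so the detour through Gaussian processes and the Cram\'er representation buys nothing and, as written, rests on an unverified non-circularity claim.
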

There are various other ways to extend
Condition~\ref{item:bochner-univariate1} of \Cref{them:bochner-stadard}
when replacing $\cset$ by a Hilbert space $\cH_0$.
\begin{definition}\label{def:positive-definite-operator-function}
  Let $\cH_0$ be a Hilbert space and $(\lcag,+)$ an  \lca\ group.  A function $\Gamma : \lcag \to \cL_b(\cH_0)$ is said to be
  \begin{enumerate}
  \item\label{item:proper-auto-cov} a \emph{proper autocovariance}
    operator function if $\cH_0$ is separable and there exists a
    $\cH_0$-valued weakly
    stationary process with autocovariance
    operator function $\Gamma$; 
    \item\label{item:positive-definite-operator-function-pd} \emph{positive definite} if for all $n \in \nset^*$, $t_1, \cdots, t_n \in \lcag$ and $\aop_1, \cdots, \aop_n \in \cL_b(\cH_0)$,
      \begin{equation*}%\label{eq:pd}
      \sum_{i, j = 1}^n \aop_i \Gamma(t_i-t_j)\aop_j^\adjoint \succeq 0 \; ;
    \end{equation*}
  \item\label{item:positive-definite-operator-function-pt} of \emph{positive-type} if for all $n \in \nset^*$, $t_1, \cdots, t_n \in \lcag$ and $x_1, \cdots, x_n \in \cH_0$,
      \begin{equation*}%\label{eq:pt}
      \sum_{i, j = 1}^n \pscal{\Gamma(t_i-t_j) x_j}{x_i}_{\cH_0} \geq 0 \; ;
    \end{equation*}
  \item\label{item:positive-definite-operator-function-hnnd} \emph{hermitian non-negative definite} if for all $n \in \nset^*$, $t_1, \cdots, t_n \in \lcag$ and $a_1, \cdots, a_n \in \cset$,
  \begin{equation*}%\label{eq:hnnd}
  \sum_{i, j = 1}^n a_i \overline{a_j} \Gamma(t_i-t_j) \succeq 0.
\end{equation*}
  Equivalently, $\Gamma$ is hermitian non-negative definite if and only if for all $x \in \cH_0$, $t \mapsto \pscal{\Gamma(t) x}{x}_{\cH_0}$ is hermitian non-negative definite. 
\end{enumerate}
\end{definition}
It is straightforward to show that the definitions in \Cref{def:positive-definite-operator-function} are given in an increasing order of generality in the sense that \ref{item:proper-auto-cov} $\Rightarrow$ \ref{item:positive-definite-operator-function-pd} $\Rightarrow$ \ref{item:positive-definite-operator-function-pt} $\Rightarrow$ \ref{item:positive-definite-operator-function-hnnd}.
In the univariate case, for a continuous $\gamma:\lcag\to\cset$ all
these definitions are trivially equivalent to
Assertion~\ref{item:bochner-univariate1} in
\Cref{them:bochner-stadard}. A natural question for a general Hilbert
space $\cH_0$ is which definition should be used to extend the Bochner
theorem. A first answer is the following corollary whose proof can be found in \Cref{sec:proofs-spectral-rep}
\begin{corollary}
\label{cor:general-bochn}  Let $(\lcag,+)$ be an \lca\ group, $\cH_0$ a separable Hilbert space and
  $\Gamma : \lcag \to \cL_b(\cH_0)$. Then the following
  assertions are equivalent.
  \begin{enumerate}[label=(\roman{enumi})]
  \item\label{item:cor:bochner-gen-proper} The function $\Gamma$ is a proper autocovariance operator function.
  \item\label{item:cor:bochner-gen-tcpovm} There exists a regular
    trace-class \povm\ $\nu$ on
    $(\hat{\lcag},\borel(\hat{\lcag}),\cH_0)$ such
    that~(\ref{eq:traceclass-bochner-general}) holds.
\end{enumerate}
\end{corollary}
This result extends Bochner's theorem from the point of view of
$\cH_0$-valued weakly stationary processes so that $\Gamma$ in
\Cref{cor:general-bochn}\ref{item:cor:bochner-gen-proper} is valued in
$\cS_1(\cH_0)$ and for all $\aop \in \cL_b(\cH_0)$,
$h \mapsto \tr(\aop \Gamma(h))$ is continuous.  It turns our that
other extensions can be obtained using a purely operator theory point
of view with the more general positiveness conditions of
\Cref{def:positive-definite-operator-function}. In the following theorem,
$\cH$ is not necessarily separable,  $\Gamma$ is not necessarily $\cS_1(\cH)$-valued (and therefore the resulting \povm\ may not be trace-class) and its continuity condition can be relaxed to continuity for the \wot\ This result is essentially the Naimark's
moment theorem of \cite{berberian1966naui}. We refer to it as the
\emph{general} Bochner theorem (or general Herglotz theorem for
$\lcag=\zset$). 
\begin{theorem}[General Bochner Theorem]\label{thm:bochnerop}
  Let $(\lcag,+)$ be an \lca\ group, $\cH$ a Hilbert space and
  $\Gamma : \lcag \to \cL_b(\cH)$.
  Then the following
  assertions are equivalent.
  \begin{enumerate}[label=(\roman{enumi})]
  \item\label{itm:pos-def} $\Gamma$ is  continuous  in \wot\ and positive definite.
  \item\label{itm:pt} $\Gamma$ is continuous  in \wot\ and of positive type.
  \item\label{itm:hnnd} $\Gamma$ is continuous  in \wot\ and hermitian non-negative definite.
  \item\label{itm:fourier} There exists a regular \povm\ $\nu$ on
    $(\hat{\lcag}, \borel(\hat{\lcag}), \cH)$ such
    that~(\ref{eq:traceclass-bochner-general}) holds.
  \end{enumerate}
  Moreover, if Assertion~\ref{itm:fourier} holds, $\nu$ is the unique
  regular \povm\ satisfying~(\ref{eq:traceclass-bochner-general}).
\end{theorem}
It is important to note that there is a subtle difference
between Assertion~\ref{item:cor:bochner-gen-tcpovm}
of \Cref{cor:general-bochn} and Assertion~\ref{itm:fourier}
of \Cref{thm:bochnerop}, namely, the latter assertion is weaker since $\nu$
is not supposed to be trace-class. In particular, we cannot rely on the
Radon-Nikodym derivative as $\nu$ is not trace-class. The proof of \Cref{thm:bochnerop} is discussed in \Cref{sec:gram-cram-bochn}. An immediate consequence of \Cref{cor:general-bochn} and \Cref{thm:bochnerop} is the following
result whose proof can be found in \Cref{sec:gram-cram-bochn}. 
\begin{corollary}
\label{cor:general-bochn-add}  Let $(\lcag,+)$ be an \lca\ group, $\cH_0$ a separable Hilbert space and
  $\Gamma : \lcag \to \cL_b(\cH_0)$. Then the following
  assertions are equivalent.
  \begin{enumerate}[label=(\roman{enumi})]
  \item\label{item:cor:general-bochn-add1} The function $\Gamma$ is a proper autocovariance operator function.
  \item\label{item:cor:general-bochn-add2} Any of
    the Assertions~\ref{itm:pos-def}--\ref{itm:hnnd} in
    \Cref{thm:bochnerop} holds and $\Gamma(0)\in\cS_1(\cH_0)$. 
  \end{enumerate}
\end{corollary}

\begin{remark}\label{rem:wot-vs-wc1}
  Let us briefly comment on the equivalence established in
  \Cref{cor:general-bochn-add}. 
  \begin{enumerate}[label=\arabic*)]
  \item \label{item:rem:wot-vs-wc1}In Condition~\ref{item:cov-op-continuity} of
    \Cref{def:functional-weakstat}, we required a 
    condition on
    $\Gamma$ which is stronger than continuity in \wot\ However in Assertion~\ref{item:cor:general-bochn-add2} of
    \Cref{cor:general-bochn-add}, the continuity of $\Gamma$ is only
    needed in the \wot\ This means that we can replace the 
    continuity Condition~\ref{item:cov-op-continuity} in \Cref{def:functional-weakstat} by continuity
    in \wot\ as in~\Cref{rem:cont-cond-def-weak-stat}~\ref{item:cov-op-continuity-prim} without changing the overall definition of a weakly
    stationary process.
  \item The previous remark is related to a fact established
    in   Proposition~3 in  \cite[Section~4.2]{yuichiro-multidim}, which states
    the equivalence  between  being \emph{scalar stationary} and
    being \emph{operator stationary}. The latter definition is the
    same as our \Cref{def:functional-weakstat}, and the former one
    amounts to replace Condition~\ref{item:cov-op-continuity} in 
    \Cref{def:functional-weakstat} by assuming that for all
    $x\in\cH_0$, $x^\adjoint \Gamma x:h\mapsto x^\adjoint \Gamma(h) x$
    is continuous and hermitian non-negative definite. But this
    amounts to says that $\Gamma$ itself is continuous in the \wot\
    and hermitian non-negative definite. Since
    $\Gamma(0)\in\cS_1(\cH_0)$ is a consequence of
    Assertion~\ref{item:def:functional-weakstat1} in
    \Cref{def:functional-weakstat}, \Cref{cor:general-bochn-add}
    indeed implies the equivalence established by  Proposition~3
    in  \cite[Section~4.2]{yuichiro-multidim}. 
  \end{enumerate}
\end{remark}

\subsection{Composition and inversion of filters}
\label{sec:pointw-comp-oper}

With the construction of the spectral theory for weakly stationary
processes of \Cref{sec:gram-cram-bochn}, the study of linear filters
for such processes is easily derived. Indeed, we are now able to give the most general definition of
linear filtering, characterize the spectral structure of the filtered
process and provide results on compositions and inversion of linear
filters. Then, in the next section, we will provide a general statement of harmonic principal
component analysis for weakly stationary processes valued in a
separable Hilbert space.

Let $\cH_0$ and $\cG_0$ be two separable Hilbert spaces and  $\Phi\in \weakmeasfunctionsetarg{\Lambda, \calA, \cH_0, \cG_0}$. Let $X=(X_t)_{t\in\lcag}$ be an $\cH_0$-valued weakly stationary stochastic process such that 
\begin{equation}
  \label{eq:filtering-condition-H0-G0}
  \Phi\in\widehat{\cH}^{X,\cG_0}\;,
\end{equation}
where $\widehat{\cH}^{X,\cG_0}$ denotes the
modular spectral domain of \Cref{def:modular-scptral-domain}. Then we can define a $\cG_0$-valued weakly stationary stochastic process $Y=(Y_t)_{t\in\lcag}$ by 
\begin{equation}\label{eq:filtering-Y}
Y_t = \int \,\chi(t)\,\Phi(\chi)\;\hat{X}(\rmd\chi)\;,\quad \text{for all } t\in\lcag\; . 
\end{equation}
We say that $\Phi$ is the \emph{transfer operator function} of the
filter.  For convenience we write, in the time domain,
\begin{equation}
  \label{eq:time-domains-notation-transfer}
X\in\processtransfer{\Phi}(\Omega,\cF,\PP)\quad\text{and}\quad
Y=\filterprocess{\Phi}(X)\;,
\end{equation}
for \eqref{eq:filtering-condition-H0-G0} and \eqref{eq:filtering-Y}
respectively.  Since $\widehat{\cH}^{X,\cG_0}$ is
not easy to described, it may appear difficult to check the condition
$X\in\processtransfer{\Phi}(\Omega,\cF,\PP)$ for a given
$\Phi$. However a special case of interest is quite easy to
characterize, namely, when
$\Phi\in\simplemeasfunctionsetarg{\hat{\lcag}, \borel(\hat{\lcag}),
  \cH_0, \cG_0}$, as shown by the following result.
\begin{proposition}
  \label{prop:carac-processtransfer}
Let $(\lcag,+)$ be an \lca\ group, and  $\cH_0$ and $\cG_0$ be two
separable Hilbert spaces. Let $(\Omega,\cF,\PP)$ be a
  probability space and
  $\Phi\in\simplemeasfunctionsetarg{\hat{\lcag}, \borel(\hat{\lcag}),
  \cH_0, \cG_0}$. Let  $X=(X_t)_{t\in\lcag}$ be a $\cH_0$-valued centered weakly stationary
  processes admitting $g_X$ as  a spectral operator density 
  with respect to a $\sigma$-finite non-negative measure $\mu$ on
  $(\hat{\lcag}, \borel(\hat{\lcag}))$. Then the mapping $\norm{\Phi g_X \Phi^\adjoint}_1$ is
  measurable from $(\hat{\lcag}, \borel(\hat{\lcag}))$ to
  $(\rset,\borel(\rset))$ and we have
  $X\in\processtransfer{\Phi}(\Omega,\cF,\PP)$ if and only if
  $$
  \int \norm{\Phi g_X \Phi^\adjoint}_1\;\rmd\mu<\infty\;.
  $$
\end{proposition}
Many examples in the time series literature rely on a \emph{time domain}
description of the filtering obtained as follows.
\begin{example}[Convolutional filtering in the discrete case]
  \label{ex:conv-discrete}
Let $\cH_0$ and $\cG_0$ be two separable Hilbert spaces.
 Let $X=(X_t)_{t\in\zset}$ be an $\cH_0$-valued weakly stationary stochastic process
 defined on $(\Omega,\cF,\PP)$. 
  Let $\Phi=(\Phi_k)_{k\in\zset}$ be a sequence valued in
  $\cL_b(\cH_0,\cG_0)$ such that $\sum_k\norm{\Phi_k}<\infty$. Define the process $Y=(Y_t)_{t\in\zset}$ by the \emph{time
    domain convolutional filtering}
  $$
  Y_t=\sum_{k\in\zset} \Phi_k\,X_{t-k}\;,\quad t\in\zset\;,
  $$
  which converges absolutely in $\cM(\Omega, \cF, \cG_0, \PP)$.
  Then, defining $\hat\Phi:\tore\to\cL_b(\cH_0,\cG_0)$ by
  $$
  \hat\Phi(\lambda)=\sum_{k\in\zset} \Phi_k\,\rme^{\rmi\lambda k}\;,
  $$
  which absolutely converges in $\cL_b(\cH_0,\cG_0)$, uniformly in
  $\lambda\in\tore$. Thus,
  $\hat\Phi\in L^{\infty}(\tore,\btore,\cL_b(\cH_0,\cG_0))$, which
  implies
  $\hat\Phi g_X
  \hat\Phi^\adjoint\in L^{1}(\tore,\btore,\cS_1(\cG_0))$. Thus, by
  \Cref{prop:carac-processtransfer}, we have
  $X\in\processtransfer{\Phi}(\Omega,\cF,\PP)$ and
  $Y=\filterprocess{\hat\Phi}(X)$.
\end{example}
This example can be easily extended to processes indexed by any \lca\ group
$\lcag$ by using the Fubini-type theorem
\Cref{prop:fubini-bochner-cagos}, see \Cref{ex:conv-allG}.

The following result deals with the composition and inversion of
general filters. Its proof can be found
in \Cref{sec:proofs-filtering}. See also  \Cref{sec:hilbert-modules}, where we introduced the symbols
$\gramianisomorphic,\gramianisometricallyembedded$.
\begin{proposition}[Composition and inversion of filters on weakly
  stationary time series]\label{prop:compo-inversion-filters-ts}
  Let $\cH_0$ and $\cG_0$ be two separable Hilbert spaces and pick a
  transfer operator function
  $\Phi \in \weakmeasfunctionsetarg{\hat{\lcag}, \borel(\hat{\lcag}),
    \cH_0, \cG_0}$. Let $X$ be a centered weakly stationary $\cH_0$-valued
  process defined on $(\Omega,\cF,\PP)$ with spectral operator measure
  $\nu_X$. Suppose that
  $X\in\processtransfer{\Phi}(\Omega,\cF,\PP)$ and set
  $Y=\filterprocess{\Phi}(X)$, as defined in~(\ref{eq:time-domains-notation-transfer}).  Then the three
  following assertions hold.
\begin{enumerate}[label=(\roman*)]
\item For any separable Hilbert space  $\cI_0$, we have $\cH^{Y,\cI_0} \gramianisometricallyembedded
  \cH^{X,\cI_0}$.
\item  For any
    separable Hilbert space  $\cI_0$ and all $\Psi \in
    \weakmeasfunctionsetarg{\hat{\lcag}, \borel(\hat{\lcag}), \cG_0, \cI_0}$, we have
    $X\in \processtransfer{\Psi \Phi}(\Omega,\cF,\PP)$ if and only if
    $\filterprocess{\Phi}(X) \in \processtransfer{\Psi}(\Omega,\cF,\PP)$, and in this
    case, we have
  \begin{equation}\label{eq:compo-filt-process}
  \filterprocess{\Psi} \circ \filterprocess{\Phi}(X) = \filterprocess{\Psi \Phi}(X).
  \end{equation}
\item Suppose that $\Phi$ is injective $\norm{\nu_X}_1$-a.e. Then
  $X = \filterprocess{\Phi^{-1}}\circ\filterprocess{\Phi}(X)$, where
  we define
  $\Phi^{-1}(\lambda):= \left(\Phi(\lambda)_{|\domain(\Phi(\lambda))
      \to \range(\Phi(\lambda))}\right)^{-1}$ with domain
  $\range(\Phi(\lambda))$ for all
  $\lambda\in\{\Phi\text{ is injective}\}$ and $\Phi^{-1}(\lambda)=0$
  otherwise. Moreover,
  Assertion~\ref{item:cor:inclusion-filtered-space} above holds with
  $\gramianisometricallyembedded$ replaced by $\gramianisomorphic$.
\end{enumerate}
\end{proposition}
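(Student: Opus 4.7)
The plan is to transfer the problem entirely to the modular spectral domain via the Kolmogorov isomorphism theorem (\Cref{thm:kolmo-isomorphism-thm}) and work with the characterization of $\mathsf{L}^2(\nu)$ spaces given in \Cref{prop:charac-L2O}. The central algebraic identity to establish first is the following: if $\hat Y=\filterspecrep{\Phi}(\hat X)$, then by \Cref{cor:filt-rand-cagos} the spectral operator measure satisfies $\nu_Y=\Phi\,\nu_X\,\Phi^\adjoint$. Taking $\mu:=\norm{\nu_X}_1$ as a dominating measure and setting $f_X:=\rmd\nu_X/\rmd\mu$, we therefore have $g_Y:=\rmd\nu_Y/\rmd\mu=\Phi f_X\Phi^\adjoint$. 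Combined with \Cref{prop:charac-L2O}, this yields, formally,
\[
\gramian{\Psi}{\Theta}_{\nu_Y}=\int \Psi g_Y \Theta^\adjoint\,\rmd\mu =\int (\Psi\Phi)f_X(\Theta\Phi)^\adjoint\,\rmd\mu =\gramian{\Psi\Phi}{\Theta\Phi}_{\nu_X}.
\]
Thus the right object of study is the map $J:\Psi\mapsto\Psi\Phi$, and the whole proposition will follow by showing $J$ has the appropriate Gramian-isometric properties between $\widehat{\cH}^{Y,\cI_0}$ and $\widehat{\cH}^{X,\cI_0}$.

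For (i), I would establish that $J$ genuinely maps $\mathsf{L}^2(\nu_Y)$ into $\mathsf{L}^2(\nu_X)$. The key step is the polar-decomposition identity: since $\Phi\in\mathsf{L}^2(\nu_X)$ makes $B:=\Phi f_X^{1/2}\in\cS_2(\cH_0,\cG_0)$ $\mu$-a.e., we have $g_Y^{1/2}=(BB^\adjoint)^{1/2}=\lvert B^\adjoint\rvert=V\lvert B\rvert V^\adjoint$ where $V$ is the partial isometry from the polar decomposition $B=V\lvert B\rvert$. From this one reads off $\Psi g_Y^{1/2}=\Psi\Phi f_X^{1/2}V^\adjoint$ pointwise, so $\Psi g_Y^{1/2}\in\cS_2$ iff $\Psi\Phi f_X^{1/2}\in\cS_2$ with identical norms. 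Applying \Cref{prop:charac-L2O} with the dominating measure $\mu$ then gives both that $J$ is well-defined into $\mathsf{L}^2(\nu_X)$ and that it is Gramian-isometric. Composing with $I_{\hat X}^{\cI_0}\circ J\circ(I_{\hat Y}^{\cI_0})^{-1}$ yields the embedding $\cH^{Y,\cI_0}\gramianisometricallyembedded\cH^{X,\cI_0}$.

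For (ii), the polar-decomposition argument above immediately gives the equivalence $\Psi\in\mathsf{L}^2(\nu_Y)\iff\Psi\Phi\in\mathsf{L}^2(\nu_X)$, i.e.\ $Y\in\processtransfer{\Psi}\iff X\in\processtransfer{\Psi\Phi}$. To establish the filter equality~\eqref{eq:compo-filt-process} I would verify it first on simple $\Psi=\1_A\aop$, where \Cref{def:integral-cagos} and the definition of $\hat Y$ give directly
\[
I_{\hat Y}^{\cI_0}(\1_A\aop)=\aop\,\hat Y(A)=\aop\int_A\Phi\,\rmd\hat X=I_{\hat X}^{\cI_0}(\1_A\aop\Phi),
\]
and then extend to all $\Psi\in\mathsf{L}^2(\nu_Y)$ by Gramian-isometric continuity, the density of simple functions being guaranteed by \Cref{thm:l2op-density}~\ref{item:thm:l2op-density:ass1}. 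Passing from $t=0$ to arbitrary $t$ is just applying the same argument to $\chi\mapsto\chi(t)\Psi(\chi)$.

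For (iii), the main work is to show that $\Phi^{-1}\in\mathscr{L}^2(\hat\lcag,\borel(\hat\lcag),\cO(\cG_0,\cH_0),\nu_Y)$; once this is in hand, applying (ii) with $\Psi=\Phi^{-1}$ yields $\filterprocess{\Phi^{-1}}\circ\filterprocess{\Phi}(X)=\filterprocess{\Phi^{-1}\Phi}(X)$, and since $\Phi^{-1}\Phi$ coincides with the identity on $\range(f_X^{1/2})\subset\domain(\Phi)$ $\mu$-a.e., the Gramian identity forces this to equal $X$. The inclusion strengthens to $\gramianisomorphic$ because $J$ now has a well-defined inverse $\Theta\mapsto\Theta\Phi^{-1}$ on the appropriate subspace. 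The main obstacle is precisely this verification: I would use the polar-decomposition identity $g_Y^{1/2}=V\lvert B\rvert V^\adjoint$ with $B=\Phi f_X^{1/2}$ to show $\range(g_Y^{1/2})\subset\range(B)\subset\range(\Phi)=\domain(\Phi^{-1})$ pointwise a.e., and then compute $\Phi^{-1}g_Y^{1/2}=\Phi^{-1}\Phi f_X^{1/2}V^\adjoint=f_X^{1/2}V^\adjoint$ (on $\range(g_Y^{1/2})$) to obtain the square-integrability in $\cS_2(\cG_0,\cH_0)$. The $\cO$-measurability of $\Phi^{-1}$ would be handled by approximating it through truncated simple operator functions built from a measurable orthonormal decomposition adapted to $\overline{\range(\Phi(\lambda))}$, which is the delicate technical point since $\Phi(\lambda)$ is unbounded with varying domain.
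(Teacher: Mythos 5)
Your proposal is correct and takes essentially the same route as the paper, which factors the argument through a \cagos-level statement (\Cref{prop:compo-inversion-filters-cagos}) built on \Cref{thm:l2-compo-characterization} and \Cref{lem:compo-s2} — the latter being exactly your polar-decomposition identity $g_Y^{1/2}=\abs{(\Phi f_X^{1/2})^\adjoint}$ — together with the same simple-function-plus-density argument for \eqref{eq:compo-filt-process}. The only divergence is in (iii), where the paper deduces $\Phi^{-1}\in\mathscr{L}^2(\hat\lcag,\borel(\hat\lcag),\cO(\cG_0,\cH_0),\nu_Y)$ by applying the already-proved equivalence to $\Psi=\Phi^{-1}$ (since $\Phi^{-1}\Phi$ is a.e.\ the identity, hence trivially square-integrable) rather than computing $\Phi^{-1}g_Y^{1/2}$ directly; the $\cO$-measurability of $\Phi^{-1}$ that you rightly flag as delicate is likewise left implicit in the paper.
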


\subsection{Cramér-Karhunen-Loève decomposition}
\label{sec:cram-karh-loeve}
Let $\cH_0$ be a separable Hilbert space with (possibly infinite)
dimension $N$ and $X = (X_t)_{t \in \lcag}$ be a centered,
$\cH_0$-valued weakly-stationary process defined on a probability
space $(\Omega, \cF, \PP)$ with Cramér representation $\hat{X}$ and
spectral operator measure $\nu_X$. 

The Cramér-Karhunen-Loève decomposition amounts to give a rigorous
meaning to the formula
\begin{equation}
  \label{eq:CKL-first}
      \hat{X}(\rmd \chi) = \sum_{0\leq n< N} \phi_n(\chi) \otimes \phi_n(\chi)\, \hat{X}(\rmd \chi) \; ,
\end{equation}
where, for all $\chi\in\hat\lcag$, $(\phi_n(\chi))_{0\leq n<N}$ is
an orthonormal sequence in $\cH_0$ chosen in such a way that the summands
in~(\ref{eq:CKL-first}) are uncorrelated and where, for all $x\in \cH$ and $y\in\cG$, we denote by $x \otimes y$ the trace-class operator from $\cG$ onto $\cH$ defined by
$(x\otimes y) z = \pscal{z}{y}_{\cG} x$ for all $z \in \cG$. Such a decomposition provides a
way to derive the harmonic principal component analysis of the process
$X$, which is an approximation of $X$ by a finite rank linear
filtering. In recent works, the functional Cramér-Karhunen-Loève
decomposition is achieved under additional assumptions on $\nu_X$ such
as having a continuous density with respect to the
Lebesgue measure (in \cite{these-tavakoli-2015}) or at most
finitely many atoms (in \cite{van_delft_note_herglots_2020}). In fact,
thanks to the Radon-Nikodym property of trace-class \povm's,
there is no need for such additional
assumptions. Instead, we rely on the following lemma, whose proof can
be found in \Cref{sec:proofs-crefs-karh}.
\begin{lemma}[Eigendecomposition of a trace-class \povm]
  \label{lem:eigendecomp-tc-povm} Let $\cH_0$ be a separable Hilbert
  space with dimension $N\in\{1,\dots,+\infty\}$.  Let $\nu$ be a
  trace-class \povm\ on $(\Lambda,\cA,\cH_0)$ and $\mu$ a
  $\sigma$-finite dominating measure of $\nu$, \text{e.g.} its
  variation norm $\norm{\nu}_1$. Then there exist sequences
  $(\sigma_n)_{0\leq n<N}$ and $(\phi_n)_{0\leq n<N}$ of
  $(\Lambda,\cA)\to(\rset_+,\borel(\rset_+))$ and
  $(\Lambda,\cA)\to(\cH_0,\borel(\cH_0))$ measurable functions,
  respectively, such that the following assertions hold.
  \begin{enumerate}[label=(\roman*),series=eigendecomp-density-tc-povm]
  \item\label{item:lem:eigendecomp-tc-povmi}   For all $\lambda\in\Lambda$,
    $(\sigma_n(\lambda))_{0\leq n<N}$ is non-increasing and
    $\displaystyle\sum_{0\leq n<N} \sigma_n(\lambda)<\infty$.
  \item\label{item:lem:eigendecomp-tc-povmii}    For all $\lambda\in\Lambda$, 
    $(\phi_n(\lambda))_{0\leq n<N}$ is orthonormal.
  \item\label{item:lem:eigendecomp-tc-povmiii} The trace-class \povm\ $\nu$ admits the density
    $$
    f : \lambda\mapsto \sum_{0\leq n<N} \sigma_n(\lambda) \,\phi_n(\lambda) \otimes \phi_n(\lambda) \;,
    $$
    with respect to $\mu$, where the convergence holds absolutely in
    $\cS_1$ for each $\lambda\in\Lambda$.
  \end{enumerate}
  Moreover, using the notations $\phi_n^\adjoint : \lambda \mapsto \phi_n(\lambda)^\adjoint$ and $\phi_n \otimes \phi_n : \lambda \mapsto \phi_n(\lambda) \otimes \phi_n(\lambda)$, we have the following properties.
  \begin{enumerate}[resume*=eigendecomp-density-tc-povm]
  \item\label{item:lem:eigendecomp-tc-povm1} The sequence $(\phi_n^\adjoint)_{0 \leq n < N}$ is orthogonal in
    $\mathsf{L}^2(\Lambda, \calA, \cO(\cH_0,\cset), \nu)$.
  \item\label{item:lem:eigendecomp-tc-povm2}  The sequence $(\phi_n\otimes\phi_n)_{0 \leq n < N}$ is Gramian-orthogonal in
    $\mathsf{L}^2(\Lambda, \calA, \cO(\cH_0), \nu)$.
  \item\label{item:lem:eigendecomp-tc-povm3} The $\cL_b(\cH_0)$-valued
    mapping 
    $\sum_{0\leq n<N}\phi_n\otimes\phi_n$ is equal to the mapping
    $\lambda\mapsto\Id_{\cH_0}$ in
    $\mathsf{L}^2(\Lambda, \calA, \cO(\cH_0), \nu)$.
  \end{enumerate}
\end{lemma}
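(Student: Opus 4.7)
The plan is to use the Radon--Nikodym property of trace-class \povm's to reduce the problem to a measurable pointwise spectral diagonalization of the density, and then to derive assertions \ref{item:lem:eigendecomp-tc-povm1}--\ref{item:lem:eigendecomp-tc-povm3} via direct calculations in the normal Hilbert module $\mathsf{L}^2(\Lambda,\cA,\cO(\cH_0),\nu)$.

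First, I would apply \Cref{thm:radon-nikodym-povm} to obtain the density $f=\frac{\rmd\nu}{\rmd\mu}\in L^1(\Lambda,\cA,\cS_1(\cH_0),\mu)$, and modify $f$ on a $\mu$-null set so that $f(\lambda)\in\cS_1^+(\cH_0)$ for every $\lambda$. Next, for each $\lambda$, use the spectral theorem for positive compact operators to diagonalize $f(\lambda)$, ordering the eigenvalues $\sigma_0(\lambda)\geq\sigma_1(\lambda)\geq\cdots\geq 0$ (repeated according to multiplicity and padded with zeros up to length $N$ if needed) and choosing orthonormal eigenvectors $\phi_n(\lambda)$. Measurability of each $\sigma_n$ follows from the Courant--Fischer min-max characterization, which reduces to a countable infimum over a dense subset of the Grassmannian of $n$-dimensional subspaces of $\cH_0$, used jointly with the measurability of $f$. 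Measurability of the $\phi_n$ is handled by a measurable selection theorem (such as Kuratowski--Ryll-Nardzewski) applied to the multivalued map $\lambda\mapsto\{\text{orthonormal bases of }\ker(f(\lambda)-\sigma_n(\lambda)\Id_{\cH_0})\}$, completing beyond $\overline{\range(f(\lambda))}$ by Gram--Schmidt against a fixed orthonormal basis of $\cH_0$. Properties \ref{item:lem:eigendecomp-tc-povmi}--\ref{item:lem:eigendecomp-tc-povmiii} then follow: \ref{item:lem:eigendecomp-tc-povmi} from $\sum_n\sigma_n(\lambda)=\tr(f(\lambda))<\infty$, \ref{item:lem:eigendecomp-tc-povmii} by construction, and \ref{item:lem:eigendecomp-tc-povmiii} from the spectral theorem with absolute $\cS_1$-convergence.

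For assertions \ref{item:lem:eigendecomp-tc-povm1}--\ref{item:lem:eigendecomp-tc-povm3}, the integrands involve bounded operator-valued functions, so item~3 of \Cref{rem-tcpovm} applies and yields $\int \Phi\,\rmd\nu\,\Psi^\adjoint=\int \Phi f\Psi^\adjoint\,\rmd\norm{\nu}_1$ (and the analogous identity with $\mu$ via \Cref{prop:charac-L2O}). Exploiting the eigenequation $f(\lambda)\phi_m(\lambda)=\sigma_m(\lambda)\phi_m(\lambda)$, a direct pointwise computation gives $\phi_n^\adjoint f\phi_m=\delta_{nm}\sigma_n$ and $(\phi_n\otimes\phi_n)\,f\,(\phi_m\otimes\phi_m)=\delta_{nm}\sigma_n\,\phi_n\otimes\phi_n$; since $\sum_n\sigma_n=\tr(f)\in L^1(\mu)$, each term is integrable. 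This delivers \ref{item:lem:eigendecomp-tc-povm1} (a scalar Gramian, since $\cS_1(\cset)=\cset$) and \ref{item:lem:eigendecomp-tc-povm2}. For \ref{item:lem:eigendecomp-tc-povm3}, note that $(\phi_n(\lambda))_{0\leq n<N}$ is an orthonormal family in $\cH_0$ of cardinality $N=\dim\cH_0$, hence an orthonormal basis, so $\sum_n\phi_n(\lambda)\otimes\phi_n(\lambda)=\Id_{\cH_0}$ for every $\lambda$ in the strong operator topology, and the two mappings coincide as elements of $\mathsf{L}^2(\Lambda,\cA,\cO(\cH_0),\nu)$ in light of \Cref{thm:pseudo-gramian-L2-unbounded}.

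The main obstacle is the measurable selection of the eigenvectors $\phi_n$: eigenspaces may jump discontinuously with $\lambda$ in the presence of multiplicities, so classical analytic perturbation theory for isolated simple eigenvalues does not suffice and one must invoke an abstract measurable selection theorem. The remaining computations are essentially bookkeeping with the spectral decomposition of $f(\lambda)$ and the integration formula for bounded operator-valued functions against a trace-class \povm.
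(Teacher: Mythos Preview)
Your proposal is correct and follows the same overall architecture as the paper: obtain the density $f$ via \Cref{thm:radon-nikodym-povm}, diagonalize it measurably, then compute (iv)--(vi) directly. The substantive difference lies in how the measurable eigendecomposition is produced.

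The paper proves a separate result (\Cref{thm:measurable-eigendecomp}) that constructs the $(\sigma_n,\phi_n)$ iteratively by repeated application of the Measurable Maximum Theorem (Aliprantis--Border, Theorem~18.19) to the Carath\'eodory function $(\lambda,x)\mapsto\pscal{\Phi(\lambda)x}{x}_{\cH_0}$ on $\Lambda\times\bar B_{0,1}$, with $\bar B_{0,1}$ equipped with the weak topology; weak compactness of the ball and weak continuity of $x\mapsto\pscal{\Phi(\lambda)x}{x}$ for $\Phi(\lambda)\in\cS_1^+$ are established in auxiliary lemmas. Your route via Courant--Fischer for the eigenvalues and Kuratowski--Ryll-Nardzewski for the eigenvectors is a legitimate alternative, and arguably more familiar; the paper's iterative maximization is more self-contained in that it extracts eigenvalue and eigenvector simultaneously and does not require verifying the graph-measurability hypotheses of KRN for the eigenspace correspondence or detailing the measurable Gram--Schmidt completion (both of which you would need to spell out if this were a full proof).

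For \ref{item:lem:eigendecomp-tc-povm3} there is also a small divergence. You argue that because $(\phi_n(\lambda))_{0\leq n<N}$ has cardinality $N=\dim\cH_0$ it is a Hilbert basis, so $\sum_n\phi_n\otimes\phi_n=\Id_{\cH_0}$ pointwise in SOT. The paper instead observes that $\bigl(\sum_n\phi_n(\lambda)\otimes\phi_n(\lambda)\bigr)f(\lambda)=f(\lambda)$ and deduces $\bigl\|\sum_n\phi_n\otimes\phi_n-\Id_{\cH_0}\bigr\|_\nu=0$ directly from the definition of the $\nu$-norm. Your argument is shorter but depends on your explicit completion step; the paper's argument works even if the $\phi_n$ only span $\overline{\range(f(\lambda))}$, which is why the authors include the cautionary remark following the lemma.
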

Assertion~\ref{item:lem:eigendecomp-tc-povm3} may be misleading
at first sight, so the following comment may be worth noting. 
\begin{remark}
  By Assertions
  \ref{item:lem:eigendecomp-tc-povmi}-\ref{item:lem:eigendecomp-tc-povmiii},
  for all $\lambda \in \Lambda$,
  $\sum_{0 \leq n < N} \phi_n(\lambda) \otimes \phi_n(\lambda)$ is the
  orthogonal projection onto the closure of the range of
  $f(\lambda)$. Thus, Assertion~\ref{item:lem:eigendecomp-tc-povm3}
  says that this projection is equal to $\Id_{\cH_0}$ in
  $\mathsf{L}^2(\Lambda, \calA, \cO(\cH_0),\nu)$. It is not equivalent
  to saying that  $\sum_{0 \leq n < N} \phi_n \otimes \phi_n=\Id_{\cH_0}$, $\norm{\nu}_1$-a.e. since it may happen that the range of
  $f(\lambda)$ is dense in $\cH_0$ for none of the $\lambda$'s, in
  which case we have  Assertion~\ref{item:lem:eigendecomp-tc-povm3} at
  the same time as $\{\sum_{0 \leq n < N} \phi_n \otimes \phi_n=\Id_{\cH_0}\}=\emptyset$. 
\end{remark}

We then get the following general formulation of a harmonic principal
components analysis for $\cH_0$-valued weakly-stationary processes, whose proof can be found in \Cref{sec:proofs-crefs-karh}.
We also refer to \Cref{rem:harmonic-pca-filtering} in the same
section, where we explain how to correctly interpret~(\ref{eq:CKL-first}).

\begin{proposition}[Harmonic functional principal components
  analysis]\label{prop:hfpca}
Let $\cH_0$ be a separable Hilbert space and $X = (X_t)_{t \in \lcag}$
be a centered, $\cH_0$-valued weakly-stationary process defined on a
probability space $(\Omega, \cF, \PP)$ with spectral operator measure
$\nu_X$. Let  $(\sigma_n)_{0 \leq n < N}$ and
  $(\phi_n)_{0 \leq n < N}$ be given as in \Cref{lem:eigendecomp-tc-povm}
  for some dominating measure $\mu$ of $\nu_X$, for instance $\mu=\norm{\nu_X}_1$.
Let  $q : \hat{\lcag} \to \nset^*$ be a measurable function. Then for
all $t \in \lcag$,
$$
 \min \set{\PE[\norm{X_t - \left[\filterprocess{\Theta}(X)\right]_t}_{\cH_0}^2] }{\Theta \in \mathsf{L}^2(\hat{\lcag}, \borel(\hat{\lcag}), \cO(\cH_0), \nu_X),  \, \rank(\Theta) \leq q}
 $$
 is equal to
 $$
 \int_{\hat{\lcag}} \sum_{q(\chi) \wedge N \leq n < N} \sigma_n(\chi)\, \mu(\rmd \chi) \; ,
 $$
 and the minimum is achieved for
 $$
 \Theta : \chi \mapsto \sum_{0 \leq n < q(\chi) \wedge N} \phi_n(\chi) \otimes \phi_n(\chi) \;.
 $$
\end{proposition}

\section{Postponed proofs}\label{sec:proofs}
\subsection{Proofs of \Cref{sec:preliminaries}}
\label{sec:proofs-prel-results}
We start with a useful lemma about measurability of compact and Schatten operator-valued functions. 
\begin{lemma}\label{lem:meas-schatten}
Let $(\Lambda,\cA)$ be a measurable space and $\cH_0,\cG_0$  two separable Hilbert spaces. Let $\cE = \cK(\cH_0, \cG_0)$ or $\cS_p(\cH_0, \cG_0)$ where $p \in \{1,2\}$ and. Then a function $\Phi : \Lambda \to \cE$ is measurable if and only if it is simply measurable.  
\end{lemma}
\begin{proof}
  The \emph{only if} part is straightforward and we only show that,
  if $\Phi$ is simply measurable then it is measurable. Since the space $\cE$ is
  separable, Pettis's measurability theorem gives that  it is enough to
  show that for all $f \in \cE^*$, $f \circ \Phi$ is a measurable
  complex-valued function. By \cite[Theorems~19.1, 18.14,
  19.2]{conwaycourseope},  we get that $\cK(\cH_0, \cG_0)^*$,
  $\cS_1(\cH_0, \cG_0)^*$ and $\cS_2(\cH_0, \cG_0)^*$ are respectively
  isometrically isomorphic to $\cS_1(\cH_0, \cG_0)$, $\cL_b(\cH_0, \cG_0)$ and
  $\cS_2(\cH_0, \cG_0)$ and the duality relation can be defined on
  $\cE \times \cE^*$ as $(\aop, \bop) \mapsto \tr(\bop^\adjoint
  \aop)$. This means that we only have to show measurability of the
  complex-valued functions $\lambda \mapsto \tr(\aop^\adjoint \Phi(\lambda))$ for all
  $\aop \in \cE^*$. Let $(\phi_k)_{k \in \nset}, (\psi_k)_{k \in \nset}$ be
  Hilbert basis of $\cH_0$ and $\cG_0$ respectively, then
  $\tr(\aop^\adjoint \Phi(\lambda)) = \sum_{k \in \nset} \pscal{\Phi(\lambda)
    \phi_k}{\aop \psi_k}_{\cG_0}$ which defines a measurable function of $\lambda$ by
  simple measurability of $\Phi$.
\end{proof}
We now provide the proof of \Cref{lem:traceclasspovm} 
\begin{proof}[\bf Proof of \Cref{lem:traceclasspovm}]
  The first point comes from the fact that for all $A \in \calA$,
  $\nu(\Lambda) - \nu(A)$ is a positive operator. Now, if $\nu$ is trace-class, then
  \eqref{eq:sigma-add} is easily verified for the norm
  $\norm{\cdot}_1$ using the fact that $\norm{\cdot}_1 = \tr(\cdot)$
  for positive operators. Finally, by definition of $\norm{\nu}_1$,
  regularity of $\norm{\nu}_1$ is equivalent to regularity of $\nu$ as
  an $\cS_1(\cH_0)$-valued measure which clearly implies regularity of
  $\nu_x = x^\adjoint \nu(\cdot) x$ for all $x \in \cH_0$. Suppose now that for all
  $x \in \cH_0$, $\nu_x$ is regular, then let
  $(e_k)_{k \in \nset}$ be a Hilbert basis of $\cH_0$, and define for
  all $n \in \nset$, the non-negative measure
  $\mu_n := \sum_{k = 0}^n \nu_{e_k}$ such that for all
  $A \in \calA$,
  $\norm{\nu}_1(A) = \lim_{n \to +\infty} \mu_n(A) = \sup_{n \in
    \nset} \mu_n(A)$. Then, by Vitali-Hahn-Sakh-Nikodym's theorem (see
  \cite{BrooksVHStheorem}), the sequence $(\mu_n)_{n \in \nset}$ is
  uniformly countably additive which implies regularity of
  $\norm{\nu}_1$ by Lemma 23 in  \cite[Chapter~VI, Section~2]{diestel1977vector}.
\end{proof}

We also provide the following useful properties on the density of a trace-class \povm\ with respect to a dominating measure. 
\begin{lemma}\label{thm:radon-nikodym-povm}
  Les $\nu$ be a trace-class \povm\ on $(\Lambda,\cA,\cH_0)$ and $\mu$ a $\sigma$-finite measure such that $\norm{\nu}_1 \ll \mu$. Les $g = \frac{\rmd \nu}{\rmd \mu}$. Then the following assertions hold.
  \begin{enumerate}[label=(\alph*)]
  \item\label{itm:density-positive} For $\mu$-almost every $\lambda
    \in \Lambda$, $g(\lambda)\in\cS_1^+(\cH_0)$.
  \item\label{itm:density-sqaure-root} The mapping
  $g^{1/2}:\lambda\mapsto g(\lambda)^{1/2}$ belongs to $L^2(\Lambda, \cA, \cS_2(\cH_0), \mu)$.
  \item\label{itm:density-trace} The density of $\norm{\nu}_1$ with
    respect to $\mu$ is $\norm{g}_1$. In particular, $g = \frac{\rmd
      \nu}{\rmd \norm{\nu}_1} \norm{g}_1$ $\mu$-a.e. and if $\mu = \norm{\nu}_1$, then $\norm{g}_1= 1$ $\mu$-a.e.
   \item\label{itm:density-trace-integral} Let
     $f:\Lambda\to\cset$ be measurable. Then $f\in
  L^1(\Lambda,\cA,\norm{\nu}_1)$ if and only if $\lambda\mapsto f(\lambda)\,g(\lambda)\in
  L^1(\Lambda,\cA,\cS_1(\cH_0),\mu)$, and we have
  $\displaystyle
  \int f(\lambda) \, \nu(\rmd \lambda)=\int f(\lambda) \,g(\lambda) \;\mu(\rmd \lambda)$.
  \end{enumerate}
\end{lemma}
\begin{proof}
  For all $x \in \cH_0$ and
  $A \in \calA$,
  $$
  \int_A \pscal{g(\lambda) x}{x}_{\cH_0} \, \mu(\rmd\lambda) = \pscal{\nu(A) x}{x}_{\cH_0} \geq 0\;,
  $$
  and there exists a set $A_x \in \calA$ with $\mu(A_x^c) = 0$
  and $\pscal{g(\lambda)x}{x}_{\cH_0} \geq 0$ for all
  $\lambda \in A_x$. Taking $(x_n)_{n \in \nset}$ a dense countable
  subset of $\cH_0$ we get that $g\in\cS_1^+(\cH_0)$ on
  $A=\bigcap_{n \in \nset} A_{x_n}$ thus proving
  Assertion~\ref{itm:density-positive}.
  For Assertion~\ref{itm:density-sqaure-root}, we get have $\g^{1/2} \in \functionsetarg{\Lambda, \calA,\cS_2(\cH_0)}$ by Lemma~2 in \cite[Section~3.4]{yuichiro-multidim} and \Cref{lem:meas-schatten} and $g^{1/2} \in L^2(\Lambda, \cA, \cS_2(\cH_0), \mu)$ then
  follows from the identity   $\norm{g^{1/2}(\lambda)}_2^2 = \norm{g(\lambda)}_1$.  Moreover, taking the trace in
  \eqref{eq:density-povm} gives for all $A \in \calA$,
  $$
  \norm{\nu}_1(A) = \int_A \norm{g}_1 \, \rmd \mu
  $$
  which gives Assertion
  \ref{itm:density-trace}. Finally, Assertion~\ref{itm:density-trace-integral} is 
  easy to get by extending the case $f=\1_A$ for $A\in\cA$ to simple
  functions and then using the density of simple functions.  
\end{proof}

\subsection{Proofs of \Cref{sec:more-preliminaries}}
\label{sec:proofs-preliminaries}
We start by exhibiting the relation between the spaces $\mathsf{L}^2(\Lambda,\calA, \cO(\cH_0, \cG_0), \nu)$ and $L^2(\Lambda, \calA,\cS_2(\cH_0, \cG_0), \mu)$ where $\norm{\nu}_1 \ll \mu$. It is easy to show that this last space is a normal Hilbert $\cL_b(\cG_0)$-module with module action defined, for $\aop\in\cL_b(\cG_0)$ and
  $\Phi\in L^2(\Lambda, \calA, \cS_2(\cH_0, \cG_0), \mu)$, by
  $\aop\bullet\Phi:\lambda\mapsto\aop\Phi(\lambda)$ and Gramian defined, for 
  $\Phi,\Psi\in L^2(\Lambda, \calA, \cS_2(\cH_0, \cG_0), \mu)$, by
\begin{equation}\label{eq:gramian-L2S2}
\gramian{\Phi}{\Psi}_{L^2(\Lambda, \calA, \cS_2(\cH_0, \cG_0), \mu)}:=\int\Phi\Psi^\adjoint\;\rmd\mu \; . 
\end{equation}
The following proposition provides an easy way to verify that a function belongs in $\mathsf{L}^2(\Lambda,\calA, \cO(\cH_0, \cG_0), \nu)$. 
 \begin{proposition}\label{prop:charac-L2O}
  Let $(\Lambda, \cA)$ be a measurable space, $\cH_0, \cG_0,\cI_0$ be
  three separable Hilbert spaces and $\nu$ a trace-class \povm\ on
  $(\Lambda, \cA, \cH_0)$. Let
  $\mu$ be a $\sigma$-finite non-negative measure dominating $\norm{\nu}_1$ and set
  $g = \frac{\rmd \nu}{\rmd \mu}$.   
  Then  the following assertions hold.
  \begin{enumerate}[label=(\alph*)]
  \item \label{item:prop:charac-L2O-1} For all $\Phi
    \in\weakmeasfunctionsetarg{\Lambda,\calA,\cH_0, \cG_0}$, we have $\Phi \in
    \mathscr{L}^2(\Lambda, \calA, \cO(\cH_0, \cG_0), \nu)$ if and only if
      \begin{equation*}
    \begin{cases}
      \range(g^{1/2}) \subset \domain(\Phi)$ and $\Phi g^{1/2} \in\cS_2(\cH_0, \cG_0)$, $\mu \text{-a.e.} \\
      \displaystyle\int \norm{\Phi g^{1/2}}_2^2\;\rmd\mu<\infty \;.
    \end{cases}
  \end{equation*}
  \item \label{item:prop:charac-L2O-2}   If $\Phi, \Psi \in \mathscr{L}^2(\Lambda, \calA, \cO(\cH_0, \cG_0), \nu)$, then $(\Phi, \Psi)$ is $\nu$-integrable and
  \begin{equation}\label{eq:square-integral-unbounded-2}
    \int \Phi \rmd \nu \Psi^\adjoint  = \gramian{\Phi g^{1/2}}{\Psi g^{1/2}}_{L^2(\Lambda, \calA, \cS_2(\cH_0, \cG_0), \mu)}\;,
  \end{equation}
where the latter Gramian comes from \eqref{eq:gramian-L2S2}. Hence the mapping $\Phi \mapsto \Phi
    g^{1/2}$ is Gramian-isometric from $\mathsf{L}^2(\Lambda,
    \calA, \cO(\cH_0, \cG_0), \nu)$ to $L^2(\Lambda, \calA,
    \cS_2(\cH_0, \cG_0), \mu)$.
  \end{enumerate}
\end{proposition}
\begin{proof}
 Let $f = \frac{\rmd \nu}{\rmd \norm{\nu}_1}$. Using that $\norm{\nu}_1(\{g = 0\}) = \int_{\{g = 0\}} \norm{g}_1 \,
  \rmd \mu =  0$ and $g = f \norm{g}_1$ $\mu$-a.e. by uniqueness of
  the density, we get that
  \begin{equation}
    \label{eq:proof-prop:charac-L2O:1}
  \norm{g}_1>0\quad\norm{\nu}_1\text{-a.e.}\quad\text{and}\quad g = f \norm{g}_1\quad\mu\text{-a.e.}  
  \end{equation}
  (and thus also $\norm{\nu}_1$-a.e. since $\norm{\nu}_1 \ll
  \mu$). From this observation, we easily get  that Assertions~\ref{itm:range-in-dom}, \ref{itm:compo-in-S2} and \ref{itm:inL1traceclass} of \Cref{def:square-integrability} are respectively equivalent to
  \begin{enumerate}[label=(\roman*')]
  \item\label{itm:range-in-dom-mu} We have $\range(g^{1/2}) \subset \domain(\Phi)$ and $\range(g^{1/2}) \subset \domain(\Psi)$, $\mu$-a.e.
  \item\label{itm:compo-in-S2-mu}  We have $\Phi g^{1/2}\in\cS_2(\cH_0, \cG_0)$ and $\Psi
    g^{1/2}\in\cS_2(\cH_0,
    \cI_0)$,  $\mu$-a.e.
  \item\label{itm:inL1traceclass-mu} $(\Phi g^{1/2})(\Psi g^{1/2})^\adjoint \in \cL^1(\Lambda, \calA, \cS_1(\cG_0,\cI_0), \mu)$. 
  \end{enumerate}
  We also easily get  that 
  \begin{equation}\label{eq:integral-L2-unbounded-mu}
    \int \Phi\rmd \nu \Psi^\adjoint = \int (\Phi g^{1/2})(\Psi g^{1/2})^\adjoint \, \rmd \mu \; .
  \end{equation}
  Let us for instance detail the proof of the equivalence
  between \ref{itm:range-in-dom-mu} and
  \ref{itm:range-in-dom} of \Cref{def:square-integrability}. The
  left-hand side of~(\ref{eq:proof-prop:charac-L2O:1}) gives that
  \begin{equation}
    \label{eq:proof-prop:charac-L2O:2}
  \norm{\nu}_1 \left( \left\lbrace\range(f^{1/2}) \not\subset
      \domain(\Phi) \right\rbrace \right)= \norm{\nu}_1 \left( \left\lbrace\range(f^{1/2}) \not\subset \domain(\Phi) \right\rbrace \cap \{g \neq 0\} \right)\;,
\end{equation}
and its right-hand side yields
  \begin{align}
    \label{eq:proof-prop:charac-L2O:3}
   \mu \left( \left\lbrace\range(f^{1/2}) \not\subset \domain(\Phi) \right\rbrace \cap \{g \neq 0\} \right) 
    &= \mu \left( \left\lbrace\range(g^{1/2}) \not\subset \domain(\Phi) \right\rbrace \cap \{g \neq 0\} \right) \\
\nonumber    &= \mu \left( \left\lbrace\range(g^{1/2}) \not\subset \domain(\Phi) \right\rbrace \right) \;,
  \end{align}
  since $\left\lbrace\range(g^{1/2}) \not\subset \domain(\Phi) \right\rbrace \cap \{g = 0\}=\emptyset$.
  To get \ref{itm:range-in-dom-mu} $\Leftrightarrow$
  \ref{itm:range-in-dom}, we note that
  $$
  \norm{\nu}_1 \left( \left\lbrace\range(f^{1/2}) \not\subset
      \domain(\Phi) \right\rbrace \cap \{g \neq 0\} \right) =
  \int_{\left\lbrace\range(f^{1/2}) \not\subset \domain(\Phi)
    \right\rbrace\cap \{g \neq 0\}} \norm{g}_1 \, \rmd \mu\;,
  $$
  and thus the right-hand side of~(\ref{eq:proof-prop:charac-L2O:2}) is
  zero if and only if the left-hand side of~(\ref{eq:proof-prop:charac-L2O:3}) is.
  Now, Assertions~\ref{item:prop:charac-L2O-1} and~\ref{item:prop:charac-L2O-2} come easily using the definition of
  $\mathscr{L}^2(\Lambda, \calA, \cO(\cH_0, \cG_0), \nu)$. Note that measurability of $\Phi g^{1/2}$ and
  $(\Phi g^{1/2})(\Phi g^{1/2})^\adjoint$ are ensured by $\cO$-measurability of
  $\Phi$, simple measurability of $g$ and \Cref{lem:meas-schatten}.
\end{proof}
We can now derive  \Cref{thm:pseudo-gramian-L2-unbounded}.
\begin{proof}[\bf Proof of \Cref{thm:pseudo-gramian-L2-unbounded}]
  All theses results are easily derived from \Cref{prop:charac-L2O} and the module nature of $L^2(\Lambda,\calA, \cS_2(\cH_0, \cG_0), \mu)$. The only difficulty lies in showing the completeness of $\mathsf{L}^2(\Lambda, \cA, \cO(\cH_0, \cG_0), \nu)$, which is detailed in the proof of Theorem~11 in \cite[Section~3.4]{yuichiro-multidim}. 
\end{proof}

We now provide a useful result about dense subsets of $\mathsf{L}^2(\Lambda, \cA, \cO(\cH_0, \cG_0), \nu)$.
\begin{theorem}\label{thm:l2op-density}
  Let $\cH_0, \cG_0$ be two separable Hilbert spaces,
  $(\Lambda, \calA)$ a measurable space, and $\nu$ a trace-class
  \povm\ on $(\Lambda, \calA, \cH_0)$. Then the space
  $L^2(\Lambda, \calA, \cL_b(\cH_0, \cG_0), \norm{\nu}_1)$ is dense in
  $\mathsf{L}^2(\Lambda, \calA, \cO(\cH_0,\cG_0), \nu)$ and 
  the following assertions hold.
  \begin{enumerate}[label=(\roman*)]
  \item\label{item:thm:l2op-density:ass1}   The space
  $\lspan{\1_A\, \aop \; : \; A \in \calA, \aop \in
    \cL_b(\cH_0, \cG_0)}$ of simple $\cL_b(\cH_0, \cG_0)$-valued
  functions is dense in
  $\mathsf{L}^2(\Lambda, \calA, \cO(\cH_0,\cG_0), \nu)$.
\item\label{item:thm:l2op-density:ass2} For any subset $E\subset L^2(\Lambda, \calA, \norm{\nu}_1)$
  which is linearly dense in $L^2(\Lambda, \calA, \norm{\nu}_1)$,
  the space
  $\lspan{h\, \aop \; : \; h\in E, \aop \in
    \cL_b(\cH_0, \cG_0)}$  is dense in
  $\mathsf{L}^2(\Lambda, \calA, \cO(\cH_0,\cG_0), \nu)$.
\end{enumerate}
\end{theorem}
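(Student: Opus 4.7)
The plan is to prove Assertion~\ref{item:thm:l2op-density:ass1} first and deduce the main density claim and Assertion~\ref{item:thm:l2op-density:ass2} from it. Since $\norm{\nu}_1$ is finite, every simple function $\1_A\aop$ belongs to $L^2(\Lambda, \calA, \cL_b(\cH_0, \cG_0), \norm{\nu}_1)$, so density of the span $V_0 := \lspan{\1_A\aop\,:\,A\in\calA,\aop\in\cL_b(\cH_0,\cG_0)}$ in $\mathsf{L}^2(\Lambda, \calA, \cO(\cH_0,\cG_0), \nu)$ will immediately yield the main claim.

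For Assertion~\ref{item:thm:l2op-density:ass1}, I would use the standard Hilbert-space orthogonality argument afforded by \Cref{thm:pseudo-gramian-L2-unbounded}. Since $V_0$ is stable under the module action $\aop'\bullet(\1_A\aop)=\1_A(\aop'\aop)$, its closure is a closed submodule. The identity $\pscal{\Phi}{\aop'\bullet\Psi}_\nu=\tr(\gramian{\Phi}{\Psi}_\nu(\aop')^\adjoint)$ combined with a choice of rank-one $\aop'\in\cL_b(\cG_0)$ shows that a scalar-product orthogonal vector to $\overline{V_0}$ is in fact Gramian-orthogonal to every generator. Hence it suffices to show that any $\Phi \in \mathsf{L}^2$ satisfying $\gramian{\Phi}{\1_A \aop}_\nu = 0$ for all $A \in \cA$ and $\aop \in \cL_b(\cH_0, \cG_0)$ vanishes. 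Expanding this Gramian via \Cref{def:square-integrability} and using self-adjointness of $f^{1/2}$, the condition reads $\int_A (\Phi f) \aop^\adjoint \, \rmd \norm{\nu}_1 = 0$ in $\cS_1(\cG_0,\cH_0)$, where $\Phi f := (\Phi f^{1/2}) f^{1/2}$ lies in $L^1(\Lambda, \calA, \cS_1(\cH_0,\cG_0), \norm{\nu}_1)$ by Cauchy--Schwarz and the normalization $\norm{f}_1=1$ from \Cref{thm:radon-nikodym-povm}\ref{itm:density-trace}. Vanishing of these integrals over all $A$ forces $(\Phi f) \aop^\adjoint = 0$ $\norm{\nu}_1$-a.e. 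Applying this for the countable family $\aop_n = y_0 \otimes x_n$ with $y_0 \in \cG_0$ fixed nonzero and $(x_n)$ countable dense in $\cH_0$, and unioning the countably many null sets, one deduces $\Phi(\lambda) f(\lambda) x_n = 0$ for every $n$ a.e.; continuity of $\Phi(\lambda)f(\lambda)$ then gives $\Phi f = 0$ a.e. Writing $\Phi f = (\Phi f^{1/2}) f^{1/2}$, the Hilbert--Schmidt operator $\Phi(\lambda) f^{1/2}(\lambda)$ vanishes on $\range(f^{1/2}(\lambda))$, hence on its closure by $\cS_2$-continuity, and trivially on the orthogonal $\ker(f^{1/2}(\lambda))$ since $f^{1/2}(\lambda)$ is self-adjoint positive. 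Thus $\Phi f^{1/2} = 0$ a.e., i.e., $\norm{\Phi}_\nu = 0$.

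Assertion~\ref{item:thm:l2op-density:ass2} then follows from Assertion~\ref{item:thm:l2op-density:ass1} by approximating each generator $\1_A \aop$ of $V_0$ by elements of $W := \lspan{h \aop \; : \; h \in E, \aop \in \cL_b(\cH_0, \cG_0)}$. Indeed, for any $g \in \lspan{E}$,
$$\norm{(\1_A - g)\aop}_\nu^2 = \int |\1_A - g|^2 \, \tr(\aop f \aop^\adjoint) \, \rmd \norm{\nu}_1 \leq \norm{\aop}^2 \, \norm{\1_A - g}_{L^2(\Lambda, \calA, \norm{\nu}_1)}^2,$$
using $\tr(\aop f \aop^\adjoint) = \tr(\aop^\adjoint\aop f) \leq \norm{\aop}^2 \norm{f}_1 = \norm{\aop}^2$ a.e. from \Cref{thm:radon-nikodym-povm}\ref{itm:density-trace}. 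Linear density of $E$ in $L^2(\Lambda, \calA, \norm{\nu}_1)$ thus gives $\1_A\aop \in \overline W$, so $\mathsf{L}^2(\Lambda, \calA, \cO(\cH_0,\cG_0), \nu) = \overline{V_0} \subset \overline W$.

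The main obstacle is the passage from $\Phi f = 0$ to $\Phi f^{1/2} = 0$ in the orthogonality step of Assertion~\ref{item:thm:l2op-density:ass1}: it relies crucially on $\Phi f^{1/2}$ being a continuous (Hilbert--Schmidt) operator extending by density from $\range(f^{1/2})$ to its closure, and on the self-adjointness of $f^{1/2}$ to control its kernel. A secondary subtlety is consolidating the ``for each $\aop$ almost everywhere'' orthogonality into a single a.e.\ statement, which requires selecting a countable family of rank-one test operators exploiting separability of $\cH_0$.
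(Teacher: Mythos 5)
Your proof is correct, but it follows a genuinely different route from the paper's for the main density claim and Assertion~\ref{item:thm:l2op-density:ass1}. The paper imports the key step from the first two steps of the proof of Theorem~12 in \cite[Section~3.4]{yuichiro-multidim} (see also \cite[Theorem~4.22]{mandrekar-square-integrability}): any $\Phi$ in $\mathsf{L}^2(\Lambda, \calA, \cO(\cH_0,\cG_0), \nu)$ is explicitly approximated by a bounded-operator-valued $\Psi\in L^2(\Lambda, \calA, \cL_b(\cH_0, \cG_0), \norm{\nu}_1)$, after which Assertion~\ref{item:thm:l2op-density:ass1} follows from the norm domination~(\ref{eq:Lb-normal-pre-hilbert-module-cont-embedding}) and the usual density of simple functions in the Bochner space. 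You instead prove Assertion~\ref{item:thm:l2op-density:ass1} directly by the Hilbert-space orthocomplement argument: scalar orthogonality to the module-stable span of simple functions upgrades to Gramian-orthogonality, which you unwind to $\Phi f=0$ a.e.\ via countably many rank-one test operators, and then to $\Phi f^{1/2}=0$ a.e.\ by splitting $\cH_0=\overline{\range(f^{1/2})}\oplus\ker(f^{1/2})$; the main claim then comes for free since the simple functions sit inside $L^2(\Lambda, \calA, \cL_b(\cH_0, \cG_0), \norm{\nu}_1)$. Each step of this checks out (the passage from vanishing integrals over all $A$ to a.e.\ vanishing is the standard Bochner-integral fact, and your handling of the kernel of $f^{1/2}$ is right), and your treatment of Assertion~\ref{item:thm:l2op-density:ass2} coincides with the paper's. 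What the comparison buys: your argument is self-contained and avoids the external truncation lemma, but it leans essentially on the completeness of $\mathsf{L}^2(\Lambda, \calA, \cO(\cH_0,\cG_0), \nu)$ (without which a trivial orthocomplement does not give density), and that completeness is itself delegated in \Cref{thm:pseudo-gramian-L2-unbounded} to the very same reference; the paper's route is constructive and gives an explicit approximant. One cosmetic slip: $(\Phi f)\aop^\adjoint$ maps $\cG_0$ to $\cG_0$, so the vanishing integrals live in $\cS_1(\cG_0)$, not $\cS_1(\cG_0,\cH_0)$.
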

\begin{proof}
  In the first two steps of the proof of Theorem~12 in
  \cite[Section~3.4]{yuichiro-multidim} (see also \cite[Theorem~4.22]{mandrekar-square-integrability}), it is shown that, if 
  $\Phi \in \mathsf{L}^2(\Lambda, \calA, \cO(\cH_0,\cG_0), \nu)$ and
  $\epsilon > 0$, there exists $\Psi \in L^2(\Lambda, \calA, \cL_b(\cH_0,\cG_0), \norm{\nu}_1) \subset
  \mathsf{L}^2(\Lambda, \calA, \cO(\cH_0,\cG_0), \nu)$ such that
  $\norm{\Phi - \Psi}_{\nu} <
  \epsilon$. This implies that $L^2(\Lambda, \calA, \cL_b(\cH_0, \cG_0), \norm{\nu}_1)$ is dense in
  $\mathsf{L}^2(\Lambda, \calA, \cO(\cH_0,\cG_0), \nu)$. Then Assertion~\ref{item:thm:l2op-density:ass1} 
  follows using  the usual
  density of simple
  functions and the fact that, for all $\Phi\in L^2(\Lambda, \calA, \cL_b(\cH_0, \cG_0),\norm{\nu}_1)$,
      \begin{multline}
      \label{eq:Lb-normal-pre-hilbert-module-cont-embedding}
      \norm{\Phi}^2_{\nu}=\tr
      \int\Phi \,f\, \Phi^\adjoint\;\rmd\norm{\nu}_1=
      \int\tr\left(\Phi \,f\,
        \Phi^\adjoint\right)\;\rmd\norm{\nu}_1\\
      \leq       \int\norm{\Phi}^2_{\cL_b(\cH_0, \cG_0)} \;\rmd\norm{\nu}_1=
\norm{\Phi}^2_{L^2(\Lambda, \calA, \cL_b(\cH_0, \cG_0),
    \norm{\nu}_1)}\;,
\end{multline}
where we used again that $\norm{f}_{1}=1$, $\norm{\nu}_1$-a.e. Assertion~\ref{item:thm:l2op-density:ass2} then follows by approximating, for any
  $A\in\cA$ and $\aop\in\cL_b(\cH_0, \cG_0)$ the function $\1_A\aop$
  by  $g\aop$ with $g\in\lspan E$ arbitrarily close to $\1_A$ in
  $L^2(\Lambda, \calA, \norm{\nu}_1)$.  
\end{proof}

With this in mind, we can prove \Cref{thm:integral-cagos}. 
\begin{proof}[\bf Proof of \Cref{thm:integral-cagos}]
  We set $\cH = \cM(\Omega, \cF, \cH_0, \PP)$ and
  $\cG = \cM(\Omega, \cF, \cG_0, \PP)$.
  For all $A,B \in \calA$ and $\aop,\bop \in \cL_b(\cH_0, \cG_0)$, we have,
  by \Cref{thm:pseudo-gramian-L2-unbounded},
  \begin{align*}
    \gramian{\1_A \aop}{\1_B \bop}_{\nu_W}
    &= \aop \nu_W(A \cap B) \bop^\adjoint \\
    &= \aop \Cov{W(A)}{W(B)} \bop^\adjoint \\
    &= \Cov{\aop W(A)}{\bop W(B)} \\
    &= \gramian{\aop W(A)}{\bop W(B)}_{\cG} \; .
  \end{align*}
  Then \Cref{prop:gramian-isometric-extension}, applied to $J = \calA \times \cL_b(\cH_0, \cG_0)$ with $v_{(A, \aop)} = \1_A \aop$ and $w_{(A, \aop)} = \aop W(A)$, gives that there exists a unique Gramian-isometric operator 
    \begin{equation}\label{eq:def:integral-cagos-extension}
  I_W^{\cG_0} : \cspanarg[\mathsf{L}^2(\Lambda, \calA, \cO(\cH_0, \cG_0), \nu_W)]{\1_A \bop \aop  \; : \; A \in \calA, \aop \in \cL_b(\cH_0, \cG_0), \bop \in \cL_b(\cG_0)} \to \cG
\end{equation}
such that for all $A \in \calA, \aop \in \cL_b(\cH_0, \cG_0)$, $I_W^\cG(\1_A \aop) = \aop W(A)$ and, in addition,
  \begin{equation}\label{eq:rangeIWG}
    \range(I_W^{\cG_0}) = \cspanarg[\cG]{\bop \aop W(A)  \; : \; A \in \calA, \aop \in \cL_b(\cH_0, \cG_0), \bop \in \cL_b(\cG_0)} \; .
  \end{equation}
  Now, note that
  \begin{equation}\label{eq:compo-bounded}
    \cL_b(\cH_0, \cG_0) = \set{\bop \aop}{\aop \in \cL_b(\cH_0, \cG_0), \bop \in \cL_b(\cG_0)} \; .
  \end{equation}
  This gives that
  $$
  \lspan{\1_A \bop \aop  \; : \; A \in \calA, \aop \in \cL_b(\cH_0, \cG_0), \bop \in \cL_b(\cG_0)} = \lspan{\1_A \aop \; : \; A \in \calA, \aop \in \cL_b(\cH_0, \cG_0)}\;.
  $$
  Therefore, by \Cref{thm:l2op-density}, the domain of 
  $I_W^{\cG_0}$ in~(\ref{eq:def:integral-cagos-extension}) is the whole
  space $\mathsf{L}^2(\Lambda, \calA, \cO(\cH_0, \cG_0), \nu_W)$. 
  Finally, \eqref{eq:compo-bounded} with \eqref{eq:rangeIWG} yields
  $$
  \range(I_W^{\cG_0})
  = \cspanarg[\cG]{\aop W(A)  \; : \; A \in \calA, \aop \in \cL_b(\cH_0, \cG_0)}
  = \cH^{W,\cG_0} \;,
  $$
  which concludes the proof.
\end{proof}
\subsection{Proofs of \Cref{sec:gram-cram-bochn}}
\label{sec:proofs-spectral-rep}
Let us start with the proof of the Gramian-Cramér representation
theorem, as a consequence of the Stone theorem. Our proof is mainly a more detailed version of the proof of Theorem~2 in \cite[Section~4.2]{yuichiro-multidim}. However, for completeness, we also prove the uniqueness of $\hat{X}$ and the converse statement (\Cref{lem:cramer-easy}) whose proofs are not  provided in \cite{yuichiro-multidim}.
The usual Stone
theorem (see \eg\ \cite[Chapter~IX]{conway1994course}) says that any continuous
isomorphism $h\mapsto U_h$ from an \lca\ group $\lcag$ to the set of unitary
operators from a Hilbert space $\cH$ onto itself can be represented as an integral of this
mapping, that is,
$$
U_h=\int \chi(h)\;\xi(\rmd \chi)\;,
$$
where $\xi$ is a \povm\ defined on the dual set of characters $\hat\lcag$
endowed with its Borel $\sigma$-field and valued in the set of
orthogonal projections on $\cH$.
This classical 
theorem has a counterpart in the case where $\cH$ is an
$\cL_b(\cH_0)$-normal Hilbert module and each $U_h$ is not only unitary
but also Gramian-unitary, in which case $\xi$ is valued in the set of
orthogonal projections   on $\cH$ whose ranges  are closed
submodules.  See \cite[Section~2.5]{yuichiro-multidim} for details.  It turns out that such \povm's are related to \cagos\ measure by the following lemma.
\begin{lemma}\label{lem:gramian-projection-povm-cagos}
  Let $\cH_0$ be a separable Hilbert space, $\cH$ an
  $\cL_b(\cH_0)$-normal Hilbert module and $(\Lambda, \cA)$ a
  measurable space. Let $\xi$ be a \povm\ on $(\Lambda, \cA, \cH)$
  valued in the set of orthogonal projections on $\cH$ whose ranges
  are closed submodules. Then for all $x_0 \in \cH$, the mapping
  $\xi x_0 : A \mapsto \xi(A) x_0$ is a \cagos\ measure on
  $(\Lambda, \cA, \cH)$ which is regular if $\xi$ is
  regular.
  \end{lemma}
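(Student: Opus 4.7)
The plan is to show first that the projections $\xi(A)$ satisfy $\xi(A)\xi(B)=0$ whenever $A\cap B=\emptyset$, so that their ranges are mutually orthogonal closed submodules of $\cH$, and then to upgrade scalar-orthogonality of submodules to Gramian-orthogonality. Indeed, the $\sigma$-additivity of $\xi$ applied to $\{A,B\}$ disjoint yields $\xi(A\cup B)=\xi(A)+\xi(B)$, and since all three operators are projections one readily gets $\xi(A)\xi(B)=0$, i.e.\ $\range(\xi(A))\perp\range(\xi(B))$ in the scalar sense. Now, given any two closed submodules $M,N$ of $\cH$ which are scalar-orthogonal, for all $m\in M$, $n\in N$ and $\aop\in\cL_b(\cH_0)$ one has $\aop\bullet m\in M$ so that $\tr(\aop\gramian{m}{n}_\cH)=\pscal{\aop\bullet m}{n}_\cH=0$; since this holds for every $\aop\in\cL_b(\cH_0)$, it forces $\gramian{m}{n}_\cH=0$. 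Applying this to $m=\xi(A)x_0\in\range(\xi(A))$ and $n=\xi(B)x_0\in\range(\xi(B))$, the defining Gramian-orthogonality of a \cagos\ measure is obtained for disjoint $A,B$.

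It remains to check that $\xi x_0$ is actually $\sigma$-additive as an $\cH$-valued measure (and not only in \wot). Take pairwise disjoint $(A_n)_{n\in\nset}\in\cA^\nset$, set $A=\bigcup_n A_n$, and use Pythagoras together with the fact just established that $(\xi(A_n)x_0)_n$ are mutually orthogonal in $\cH$: for all $N\leq M$,
\[
\norm{\sum_{n=N}^M \xi(A_n)x_0}_\cH^2
=\sum_{n=N}^M\norm{\xi(A_n)x_0}_\cH^2
=\sum_{n=N}^M\pscal{\xi(A_n)x_0}{x_0}_\cH\;,
\]
where we used that each $\xi(A_n)$ is a self-adjoint projection. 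The \wot-$\sigma$-additivity of $\xi$ guarantees that $\sum_{n\in\nset}\pscal{\xi(A_n)x_0}{x_0}_\cH=\pscal{\xi(A)x_0}{x_0}_\cH<\infty$, so the partial sums in $\cH$ are Cauchy and converge to some $y\in\cH$. Testing against any $z\in\cH$, \wot\ convergence of $\sum_n\xi(A_n)$ to $\xi(A)$ identifies $y=\xi(A)x_0$, which yields $\sigma$-additivity of $\xi x_0$ in $\cH$.

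Finally, for the regularity statement, the intensity operator measure is $\nu_{\xi x_0}:A\mapsto\gramian{\xi(A)x_0}{\xi(A)x_0}_\cH=\gramian{\xi(A)x_0}{x_0}_\cH$, using again that $\xi(A)$ is a projection whose range is a submodule (hence it is $\cL_b(\cH_0)$-linear and Gramian-self-adjoint). Since $\nu_{\xi x_0}$ is a trace-class \povm, \Cref{lem:traceclasspovm} gives $\norm{\nu_{\xi x_0}}_1(A)=\tr\nu_{\xi x_0}(A)=\pscal{\xi(A)x_0}{x_0}_\cH$, which is exactly the scalar measure appearing in the regularity assumption on $\xi$ applied with $x=x_0$. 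Hence if $\xi$ is regular then $\norm{\nu_{\xi x_0}}_1$ is regular, i.e.\ $\xi x_0$ is a regular \cagos\ measure by definition. The only subtle step is the passage from scalar- to Gramian-orthogonality of submodules; everything else is a bookkeeping exercise combining the \povm\ structure of $\xi$ with the module structure of the ranges of $\xi(A)$.
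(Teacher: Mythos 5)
Your proof is correct, but it follows a noticeably more self-contained route than the paper's. The paper gets Gramian-orthogonality in one line by combining two cited facts: that an orthogonal projection whose range is a closed submodule is Gramian-self-adjoint (so that $\gramian{\xi(A)x_0}{\xi(B)x_0}_{\cH}=\gramian{\xi(B)\xi(A)x_0}{x_0}_{\cH}$), and the multiplicativity $\xi(A)\xi(B)=\xi(A\cap B)$ for projection-valued \povm's from \cite[Theorem~3]{Berberian1966NotesOS}; and it gets countable additivity of $\xi x_0$ in $\cH$ from the \wot-to-\sot\ upgrade of \cite[Proposition~1]{Berberian1966NotesOS}. You instead derive $\xi(A)\xi(B)=0$ for disjoint sets directly from finite additivity plus the algebra of idempotent self-adjoint operators, and you upgrade scalar orthogonality of the ranges to Gramian-orthogonality via the duality argument $\tr(\aop\gramian{m}{n}_\cH)=\pscal{\aop\bullet m}{n}_\cH=0$ for all $\aop\in\cL_b(\cH_0)$ — which is essentially a proof of the ``closed submodules have Gramian-orthogonal complements'' fact that the paper cites from the literature. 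Likewise your Pythagoras/Cauchy argument is a direct proof, in this special case, of the \sot\ convergence that the paper imports from Berberian. The regularity step is identical in both proofs (and, as you implicitly note, only the scalar self-adjointness of $\xi(A)$ is needed there, since one takes the trace anyway). What your approach buys is independence from the two external references at the cost of a longer argument; what the paper's buys is brevity. Both are sound.
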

\begin{proof}
  Using the fact that $\xi$ is a \povm\ on $(\Lambda, \cA, \cH)$ and
  \cite[Proposition~1]{Berberian1966NotesOS}, it is straightforward to
  see that $\xi x_0$ is an $\cH$-valued measure. Moreover, since $\xi$
  is valued in the set of orthogonal projections on $\cH$ whose ranges
  are closed submodules, we get that for all disjoint
  $A,B \in \borel(\lcag)$
  $$
  \gramian{\xi(A)x_0}{\xi(B) x_0}_{\cH} =  \gramian{\xi(B) \xi(A)x_0}{x_0}_{\cH} = \gramian{\xi(B \cap A)x_0}{x_0}_{\cH}  = 0 \; ,
  $$
  where the first equality is justified in
  \cite[P.~23]{yuichiro-multidim} and the second one by
  \cite[Theorem~3]{Berberian1966NotesOS}. This proves that $\xi x_0$
  is a \cagos\ measure on $(\Lambda,\cA,\cH)$. In the following, we
  denote by $\nu$ its intensity 
  operator measure. Then,  for all $A \in \cA$, we have 
  $$
  \norm{\nu(A)}_1=\tr{\gramian{\xi(A) x_0}{\xi(A) x_0}_{\cH}} = \pscal{\xi(A) x_0}{x_0}_{\cH}\;,  
  $$
  where the last equality comes from the fact that $\xi(A)$ is an orthogonal projection on $\cH$. 
  Now, if $\xi$ is regular, then the measure $A \mapsto
  \pscal{\xi(A) x_0}{x_0}$ is regular and so is $\norm{\nu}_1$ by the
  previous display. This implies that $\xi x_0$ is regular and the
  proof is concluded.
\end{proof}

The proof of uniqueness in \Cref{thm:spectral-analysis-hilbert} requires the following result which is a kind of Fubini theorem for
interchanging a Bochner integral with a \cagos\ 
integral.

\begin{proposition}\label{prop:fubini-bochner-cagos}
  Let $(\Lambda, \calA)$ be a measurable space and $\cH_0$, $\cG_0$ two
  separable Hilbert spaces. Let $W$ be an $\cH_0$-valued random \cagos\ measure on
  $(\Lambda,\calA,\Omega,\cF,\PP)$ with intensity operator
  measure $\nu_W$. Let $\mu$ be a non-negative
  measure on a measurable space  $(\Lambda',\calA')$. Suppose that
  $\Phi$ is measurable from $\Lambda\times\Lambda'$
  to $\cL_b(\cH_0,\cG_0)$ and 
  satisfies 
  \begin{align}
    \label{eq:fubini-cagos-bichner-cond1}
 \int\left(\int  \norm{\Phi(\lambda,\lambda')}_{\cL_b(\cH_0,\cG_0)}\;
    \mu(\rmd\lambda')\right)^2\norm{\nu_W}_1(\rmd\lambda)<\infty\;,\\
        \label{eq:fubini-cagos-bichner-cond2}
 \int\left(\int  \norm{\Phi(\lambda,\lambda')}_{\cL_b(\cH_0,\cG_0)}^2\;
  \norm{\nu_W}_1(\rmd\lambda)\right)^{1/2}\mu(\rmd\lambda')<\infty\;.    
  \end{align}
  Then we have
    \begin{equation}
    \label{eq:fubini-cagos-bichner-formula}
  \int\left(\int
    \Phi(\lambda,\lambda')\;\mu(\rmd\lambda')\right)\;
  W(\rmd\lambda)=
    \int\left(\int
    \Phi(\lambda,\lambda')\;W(\rmd\lambda)\right)\;\mu(\rmd\lambda')\;,
\end{equation}
where integrals with respect to $W$ are as
in \Cref{def:integral-cagos}, in the left-hand side the innermost
integral is understood as a Bochner integral on
$L^2(\Lambda',\cA',\cL_b(\cH_0,\cG_0),\mu)$
and in the right-hand side, the outermost integral is understood as a
Bochner integral on $L^2(\Lambda',\cA',\cM(\Omega, \cF, \cG_0, \PP),\mu)$.
\end{proposition}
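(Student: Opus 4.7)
The plan is to check that both sides are well defined under the stated hypotheses, prove the identity on an elementary class of transfer functions by direct computation, and then extend to arbitrary $\Phi$ by a two-sided dominated convergence argument.

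For well-definedness of the left-hand side, condition~(\ref{eq:fubini-cagos-bichner-cond1}) combined with the triangle inequality for Bochner integrals ensures that $\tilde\Phi:\lambda\mapsto\int\Phi(\lambda,\lambda')\,\mu(\rmd\lambda')$ is a well-defined Bochner integral for $\norm{\nu_W}_1$-a.e.~$\lambda$ and that $\tilde\Phi\in L^2(\Lambda,\calA,\cL_b(\cH_0,\cG_0),\norm{\nu_W}_1)\subset\mathscr{L}^2(\Lambda,\calA,\cO(\cH_0,\cG_0),\nu_W)$ by \Cref{rem-tcpovm}, so $\int\tilde\Phi\,\rmd W$ is provided by \Cref{def:integral-cagos}. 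For the right-hand side, condition~(\ref{eq:fubini-cagos-bichner-cond2}) forces $\Phi(\cdot,\lambda')\in L^2(\Lambda,\calA,\cL_b(\cH_0,\cG_0),\norm{\nu_W}_1)$ for $\mu$-a.e.~$\lambda'$, so $\tilde\Psi(\lambda'):=\int\Phi(\lambda,\lambda')\,W(\rmd\lambda)\in\cM(\Omega,\cF,\cG_0,\PP)$; combining the Gramian-isometric property of $I_W^{\cG_0}$ from \Cref{thm:integral-cagos} with the embedding bound~(\ref{eq:Lb-normal-pre-hilbert-module-cont-embedding}) yields
\[
\norm{\tilde\Psi(\lambda')}_{\cM(\Omega,\cF,\cG_0,\PP)}\le\left(\int\norm{\Phi(\lambda,\lambda')}_{\cL_b(\cH_0,\cG_0)}^{2}\,\norm{\nu_W}_1(\rmd\lambda)\right)^{1/2},
\]
which is $\mu$-integrable by~(\ref{eq:fubini-cagos-bichner-cond2}), so $\tilde\Psi$ is Bochner $\mu$-integrable into $\cM(\Omega,\cF,\cG_0,\PP)$.

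Next, the elementary case of $\Phi(\lambda,\lambda')=\1_A(\lambda)\,\1_{A'}(\lambda')\,\aop$ with $A\in\calA$, $A'\in\calA'$, $\mu(A')<\infty$, and $\aop\in\cL_b(\cH_0,\cG_0)$ is direct: both sides of~(\ref{eq:fubini-cagos-bichner-formula}) collapse to $\mu(A')\,\aop\,W(A)$ by \Cref{def:integral-cagos} and linearity of the Bochner integral, and the identity extends by linearity to all finite sums of such functions. I then approximate a general measurable $\Phi$ by a sequence $(\Phi_n)$ of such finite sums satisfying $\Phi_n(\lambda,\lambda')\to\Phi(\lambda,\lambda')$ pointwise in operator norm and $\norm{\Phi_n}_{\cL_b(\cH_0,\cG_0)}\le2\,\norm{\Phi}_{\cL_b(\cH_0,\cG_0)}$; such a sequence is produced by the standard construction via Pettis's measurability theorem, since the measurable $\Phi$ has essentially separable range.

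On the left, dominated convergence in $\lambda'$, with dominant $2\int\norm{\Phi(\lambda,\cdot)}\,\rmd\mu$ which lies in $L^2(\norm{\nu_W}_1)$ by~(\ref{eq:fubini-cagos-bichner-cond1}), first gives $\tilde\Phi_n(\lambda)\to\tilde\Phi(\lambda)$ pointwise in $\cL_b(\cH_0,\cG_0)$, and a second dominated convergence in $\lambda$ gives $\tilde\Phi_n\to\tilde\Phi$ in $L^2(\Lambda,\calA,\cL_b(\cH_0,\cG_0),\norm{\nu_W}_1)$; the Gramian-isometric property of $I_W^{\cG_0}$ then propagates this to convergence of the left-hand side of~(\ref{eq:fubini-cagos-bichner-formula}) in $\cM(\Omega,\cF,\cG_0,\PP)$. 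Symmetrically, the right-hand side converges by the same scheme with the roles of $\lambda$ and $\lambda'$ exchanged, using~(\ref{eq:fubini-cagos-bichner-cond2}) to produce an $L^1(\mu)$ dominant for Bochner's dominated convergence theorem applied in $\cM(\Omega,\cF,\cG_0,\PP)$. Since the identity holds at each $\Phi_n$, it passes to the limit. The main obstacle will be organizing a single approximating sequence that simultaneously feeds both dominated convergence arguments: conditions~(\ref{eq:fubini-cagos-bichner-cond1}) and~(\ref{eq:fubini-cagos-bichner-cond2}) are asymmetric, and one must verify that pointwise approximation with pointwise bound by a multiple of $\norm{\Phi}$ interfaces correctly with the two Bochner-type theorems, one for Banach-valued integrands against $\norm{\nu_W}_1$ on $\Lambda$, the other for $\cM$-valued integrands against $\mu$ on $\Lambda'$.
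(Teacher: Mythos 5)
Your argument is correct and rests on exactly the same two estimates as the paper's proof: the bound $\|\int\Phi(\cdot,\lambda')\,\mu(\rmd\lambda')\|_{\nu_W}^2\leq\int\bigl(\int\|\Phi(\lambda,\lambda')\|\,\mu(\rmd\lambda')\bigr)^2\norm{\nu_W}_1(\rmd\lambda)$ coming from~(\ref{eq:Lb-normal-pre-hilbert-module-cont-embedding}) for the left-hand side, and the isometry bound $\|\int\Phi(\cdot,\lambda')\,\rmd W\|_{\cM(\Omega,\cF,\cG_0,\PP)}\leq\bigl(\int\|\Phi(\lambda,\lambda')\|^2\,\norm{\nu_W}_1(\rmd\lambda)\bigr)^{1/2}$ for the right-hand side, together with the same verification on elementary tensors $\1_A\1_{A'}\aop$. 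Where you diverge is in the extension step: the paper turns~(\ref{eq:fubini-cagos-bichner-cond1}) and~(\ref{eq:fubini-cagos-bichner-cond2}) into norms on Banach spaces $E_1$, $E_2$, observes that both sides of~(\ref{eq:fubini-cagos-bichner-formula}) are \emph{continuous linear} maps from $E=E_1\cap E_2$ into $\cM(\Omega,\cF,\cG_0,\PP)$, and concludes by agreement on a total set; you instead build an explicit pointwise-convergent, dominated approximating sequence and run dominated convergence twice on each side. The paper's packaging is lighter because it only needs density of the elementary tensors in the $E$-norm, whereas your route demands the stronger property of pointwise convergence with a uniform pointwise dominant --- which is precisely where your one soft spot lies: the standard strong-measurability construction produces simple functions $\sum_i\1_{C_i}\aop_i$ with $C_i$ in the product $\sigma$-field $\calA\otimes\calA'$, \emph{not} finite sums of rectangle tensors $\1_A(\lambda)\1_{A'}(\lambda')\aop$. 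To close this you must either extend the elementary verification from rectangles to general $\1_C\aop$, $C\in\calA\otimes\calA'$, by a $\pi$-$\lambda$ (monotone class) argument using $\sigma$-additivity of $W$ and of the Bochner integral, or approximate each $\1_{C_i}$ by finite unions of rectangles and extract an a.e.-convergent subsequence before applying your dominated convergence scheme. This is a routine repair (and the paper is equally silent about the corresponding density claim in $E$), but as written the sentence ``such a sequence is produced by the standard construction'' overstates what that construction delivers. Apart from this, both well-definedness checks and both limit passages are sound.
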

\begin{proof}
  Conditions~(\ref{eq:fubini-cagos-bichner-cond1})
  and~(\ref{eq:fubini-cagos-bichner-cond2}) ensure that
  $\Phi(\lambda,\cdot)\in
  L^1(\Lambda',\calA',\cL_b(\cH_0,\cG_0),\mu)$ for
  $\norm{\nu_W}_1$-a.e.
  $\lambda\in\Lambda$ and that
  $\Phi(\cdot,\lambda')\in
  L^2(\Lambda,\calA,\cL_b(\cH_0,\cG_0),\norm{\nu}_1)$ for
  $\mu$-a.e.
  $\lambda'\in\Lambda'$, showing that the innermost integrals in both
  sides of~(\ref{eq:fubini-cagos-bichner-formula}) are well defined
  for adequate sets of $\lambda$ and $\lambda'$, respectively.
  Let $E_1$ and $E_2$ denote the sets of functions $\Phi$  measurable from $\Lambda\times\Lambda'$
  to $\cL_b(\cH_0,\cG_0)$  and
  satisfying~(\ref{eq:fubini-cagos-bichner-cond1})
  and~(\ref{eq:fubini-cagos-bichner-cond2}), respectively.  We denote by
  $\norm{\Phi}_{E_1}$ the square root of the left-hand side
  of~(\ref{eq:fubini-cagos-bichner-cond1}) and by  $\norm{\Phi}_{E_2}$
  the left-hand side of~(\ref{eq:fubini-cagos-bichner-cond2}), which make
  $E_1$ and $E_2$ Banach spaces. Then, for all $\Phi\in E:= E_1\cap
  E_2$, concerning the left-hand side of~(\ref{eq:fubini-cagos-bichner-formula}), we have
$$
\norm{\int
  \Phi(\cdot,\lambda')\;\mu(\rmd\lambda')}^2_{\nu_W}\leq
\int\norm{\int
  \Phi(\cdot,\lambda')\;\mu(\rmd\lambda')}^2_{\cL_b(\cH_0,\cG_0)}\;\rmd\norm{\nu_W}_1\leq
\norm{\Phi}_{E_1}^2\;,
$$
as for the right-hand side, we have,  setting $\cH:=\cM(\Omega, \cF, \cG_0, \PP)$,
$$
\int\norm{\int
  \Phi(\lambda,\cdot)\;W(\rmd\lambda)}_\cH\;\rmd \mu=
\int\norm{\Phi(\cdot,\lambda')}_{\nu_W}\;\mu(\rmd \lambda')\leq
\norm{\Phi}_{E_2}\;,
$$
These two
inequalities show that both sides
of~(\ref{eq:fubini-cagos-bichner-formula}) seen as functions of $\Phi$
are linear continuous from $E$ endowed with the norm $\norm{\cdot}_{E}=\norm{\cdot}_{E_1}+\norm{\cdot}_{E_2}$ to $\cM(\Omega, \cF, \cG_0,
\PP)$. Since they coincide for $\Phi(\lambda, \lambda') =
\1_A(\lambda) \1_B(\lambda') \aop$ with $A\in\cA$,
$B\in\cA'$ and $\aop \in \cL_b(\cH_0,\cG_0)$, this concludes the proof. 
\end{proof}
An interesting straightforward application of
\Cref{prop:fubini-bochner-cagos} is the following extension of \Cref{ex:conv-discrete}.
\begin{example}[Convolutional filtering]
  \label{ex:conv-allG}
Let $\cH_0$ and $\cG_0$ be two separable Hilbert spaces.
 Let $X=(X_t)_{t\in\lcag}$ be an $\cH_0$-valued weakly stationary stochastic process
 defined on $(\Omega,\cF,\PP)$. 
  Let $\eta$ be the Haar measure on $\lcag$ (see
  \cite[Chapter~1]{rudin1990fourier}) and $\Phi\in
  L^1(\lcag,\borel(\lcag),\cL_b(\cH_0,\cG_0),\eta)$. Define the process $Y=(Y_t)_{t\in\lcag}$ by the \emph{time
    domain convolutional filtering}
  $$
  Y_t=\int \Phi(s)\,X_{t-s}\;\eta(\rmd s)\;,\quad t\in\lcag\;,
  $$
  where the integral is a Bochner integral on $L^1(\lcag,\borel(\lcag),\cM(\Omega, \cF, \cG_0, \PP),\eta)$.
  Then, defining $\hat\Phi:\hat\lcag\to\cL_b(\cH_0,\cG_0)$ by the
  following Bochner integral on $L^1(\lcag,\borel(\lcag),\cL_b(\cH_0,\cG_0),\eta)$,
  $$
  \hat\Phi(\chi)=\int \Phi(s)\;\overline{\chi(s)}\;\eta(\rmd
  s)\;,\quad\chi\in\hat\lcag\;.
  $$.
  It is then immediate to show
  that~(\ref{eq:fubini-cagos-bichner-cond1})
  and~(\ref{eq:fubini-cagos-bichner-cond2}) hold with
  $\Phi:(\chi,s)\mapsto\Phi(s)\;\overline{\chi(s)}$,  $\mu=\eta$ and
  $W=\hat X$, and \Cref{prop:fubini-bochner-cagos} yields
  $Y=\filterprocess{\hat\Phi}(X)$.
\end{example}
We can now provide a detailed proof of \Cref{thm:spectral-analysis-hilbert}. 
\begin{proof}[\bf Proof of \Cref{thm:spectral-analysis-hilbert}]
  Suppose that $X$ is weakly stationary as in
  \Cref{def:functional-weakstat}. Then the collection of lag operators
  $(U_h^X)_{h\in\lcag}$ 
  satisfies the assumptions of the generalized Stone's theorem stated
  as Proposition~4 in \cite[Section~2.5]{yuichiro-multidim}. This gives that there
  exists a regular \povm\ $\xi^X$ on
  $(\hat{\lcag}, \borel(\hat{\lcag}),\cH^X)$ valued in the set of
  orthogonal projections whose ranges are closed submodules of $\cH^X$ such
  that, for all $h \in \lcag$,
  \begin{equation}\label{eq:stone-shift-module}
    U^X_h = \int \chi(h) \, \xi^X(\rmd \chi)\;,
  \end{equation}
  where the integral is as in Theorem~9 in \cite{Berberian1966NotesOS}.
  Then, by \Cref{lem:gramian-projection-povm-cagos}, the mapping
  \begin{equation}\label{eq:def:hatX-stone}
    \hat{X}: \fundef{\borel(\hat{\lcag}) & \to & \cH^X  \\ A & \mapsto & \xi^X(A) X_0}
\end{equation}
  is a regular \cagos\ measure on $(\hat\lcag, \borel(\hat\lcag),
  \cH^X)$ and we denote by $\nu_X$ its intensity operator
  measure. Since $\cH^X$ is a submodule of $\cM(\Omega, \cF, \cH_0,
  \PP)$, $\hat{X}$ is also a regular $\cH_0$-valued random \cagos\
  measure on $(\Omega, \cF, \PP)$, see \Cref{def:cagos}. Relation
  \eqref{eq:spectralrep-functional} then follows by
  applying~(\ref{eq:stone-shift-module}) and the fact that, for all $t
  \in \lcag$,  $U^h_t X_0 = X_t$ and, for all
  $\phi : \Lambda \to \cset$ measurable and
  bounded,
    \begin{equation}\label{eq:integral-cagos-projection}
    \int \phi \, \rmd \hat X = \left(\int \phi \, \rmd \xi^X \right) X_0 \; ,
  \end{equation}
  where the integral in the left-hand side is defined as in
  \Cref{def:integral-cagos} (see also \Cref{rem:scalar-int-cagos}) and
  the integral in the right-hand side as in  Theorem~9 in \cite{Berberian1966NotesOS}, for the \povm\ $\xi^X$. Relation~(\ref{eq:integral-cagos-projection}) obviously
  holds if $\phi=\1_A$ with $A\in\cA$ and also for $\phi$ simple by
  linearity. Now, for a general measurable and bounded $\phi : \Lambda \to \cset$,
  we can find a sequence $(\phi_n)_{n\in\nset}$ of
  simple functions such that $|\phi_n|\leq|\phi|$ for all $n\in\nset$ and
  $\phi_n(\lambda)\to\phi(\lambda)$ as $n\to\infty$ for all
  $\lambda \in\Lambda$. Then, by dominated convergence, $\phi_n$
  converges to $\phi$ in $L^2(\Lambda, \cA, \norm{\nu}_1)$ and
  therefore $\phi_n \Id$ converges to $\phi \Id$ in
  $\mathsf{L}^2(\Lambda, \cA, \cO(\cH_0), \nu)$. Thus
  $\int \phi_n \, \rmd \hat X \to \int \phi \, \rmd \hat X$ in $\cH^X$
  by the isometric property of the integral of
  \Cref{def:integral-cagos}.  To get
  \eqref{eq:integral-cagos-projection}, it now suffices to show that,
  for all $Y\in\cH^X$,
  $\pscal{\left(\int \phi_n \, \rmd \xi\right)X_0}{Y}_{\cH^X}
  \to\pscal{\left(\int \phi \, \rmd \xi\right)X_0}{Y}_{\cH^X}$. This
  follows from the  polarization formula,   Theorem~9 in \cite{Berberian1966NotesOS}
  and dominated convergence.

To show uniqueness, suppose there exists another regular
$\cH_0$-valued random \cagos\ measure $W$ on
$(\hat{\lcag}, \borel(\hat{\lcag}), \Omega,\cF,\PP)$ satisfying the
same identity as \eqref{eq:spectralrep-functional} with $\hat{X}$
replaced by $W$. Then, we get
  \begin{equation}\label{eq:W-hatX-equal-on-characters}
    \int \chi(t) \; \hat{X}(\rmd \chi)= \int \chi(t) \; W(\rmd \chi)\quad\text{for all}\quad t \in \lcag\;.
  \end{equation}
Let $\eta$ denote the Haar measure
on $\lcag$ and denote by $\cC_c(\lcag)$ the space of compactly
supported functions from $\lcag$ to $\cset$. Then, by
\cite[Theorem~1.2.4]{rudin1990fourier} and
\cite[Section~E.8]{rudin1990fourier}, the space
$$
E=\set{\hat\phi:\chi\mapsto\int\phi(t)\overline{\chi(t)}\;\eta(\rmd t)}{\phi\in L^1(\lcag,\borel(\lcag), \eta)}
$$
is dense in
$L^2(\hat\lcag,\borel(\hat\lcag),\norm{\nu_W}_1+\norm{\nu_X}_1)$.  We
can thus find, for any $A\in\borel(\hat\lcag)$,
$(\phi_n)_{n\in\nset}\in\cC_c(\lcag)^\nset$ such that,  defining $\hat\phi_n$ as
above, $\hat\phi_n\to\1_A$ both in
$L^2(\hat\lcag,\borel(\hat\lcag),\norm{\nu_W}_1)$ and in
$L^2(\hat\lcag,\borel(\hat\lcag),\norm{\nu_X}_1)$. Then by
\Cref{prop:fubini-bochner-cagos}, we have, for all $n\in\nset$,
\begin{multline*}
\int \hat\phi_n(\chi)\;W(\rmd\chi)
=\int\left(\int\chi(-t) \;W(\rmd\chi)\right)\phi_n(t)\;\eta(\rmd t)\\
=\int\left(\int\chi(-t)\;\hat X(\rmd\chi)\right)\phi_n(t)\;\eta(\rmd t)
=\int \hat\phi_n(\chi)\;\hat X(\rmd\chi)\;,
\end{multline*}
where we have used~(\ref{eq:W-hatX-equal-on-characters}) in the second
equality. Letting $n\to\infty$, we get $W(A)=\hat X(A)$,
  thus proving the uniqueness of $\hat{X}$. 
\end{proof}

We now prove \Cref{lem:cramer-easy}.

\begin{proof}[\bf Proof of \Cref{lem:cramer-easy}]
  By \Cref{def:integral-cagos}, $X=(X_t)_{t\in\lcag}$ is a centered
  $\cH_0$-valued process
  satisfying~\ref{item:def:functional-weakstat1}
  and~\ref{item:def:functional-weakstat2} in
  \Cref{def:functional-weakstat}. Using the Gramian-isometric property
  of integration with respect to $W$, we get for all $t,h \in \lcag$,
  $\Cov{X_{t+h}}{X_t} = \int \chi(t+h) \overline{\chi(t)} \nu_X(\rmd
  \chi) = \int \chi(h) \nu_X(\rmd \chi)$ which gives
  \ref{item:def:functional-weakstat3} in
  \Cref{def:functional-weakstat} with autocovariance operator
  function $\Gamma$ given
  by~(\ref{eq:traceclass-bochner-general}). Finally, for all
  $\aop \in \cL_b(\cH_0)$, for all $h \in \lcag$, denoting by $f$ the
  density of $\nu$ with respect to $\norm{\nu}_1$, we have
  $$
  \aop \Gamma(h)=\aop  \, \int \chi(h) \,f(\chi)\; \norm{\nu}_1(\rmd
  \chi)=\int \chi(h) \,\aop  \, f(\chi)\; \norm{\nu}_1(\rmd
  \chi)\;,
  $$
  Since the integrand in the last integral has trace-class norm upper
  bounded by $\norm{\aop}_{\cL_b(\cH_0)}$ and $\norm{\nu}_1$ is
  finite we get that $h\mapsto \aop \Gamma(h)$ is continuous from
  $\hat\lcag$ to $\cS_1(\cH_0)$ by dominated convergence. The
  continuity of $h \mapsto \tr(\aop \Gamma(h))$ follows, thus showing
  the last point of \Cref{def:functional-weakstat}.
\end{proof}

We can now prove  the Kolmogorov isomorphism theorem.
\begin{proof}[\bf Proof of \Cref{thm:kolmo-isomorphism-thm}]
By  \Cref{thm:integral-cagos} and~(\ref{eq:spectral-domain-def}), $I_{\hat{X}}^{\cG_0}$ is a
Gramian-unitary operator from $\widehat{\cH}^{X,\cG_0}$ to
$\cH^{\hat{X},\cG_0}$. Thus to conclude, we only need to show that $\cH^{X,\cG_0}
=\cH^{\hat{X},\cG_0}$. By~(\ref{eq:spectralrep-functional}), we have
for all $\aop\in\cL_b(\cH_0,\cG_0)$ and $t\in\lcag$, $\aop X_t=
I_{\hat{X}}^{\cG_0}(\aop\rme_t)\in\cH^{\hat{X},\cG_0}$,  where $\rme_t
: \chi \mapsto \chi(t)$. Thus, by~(\ref{eq:time-domain-def}), we get that $\cH^{X,\cG_0}
\subset\cH^{\hat{X},\cG_0}$. The definition of $\hat{X}$
in~(\ref{eq:def:hatX-stone}) gives the converse inclusion, which
achieves the proof. 
\end{proof}

The proof of \Cref{cor:general-bochn} relies on the following result providing a way to build a \cagos\ measure $W$ from its intensity measure. 
\begin{theorem}
  \label{thm:gaussian-cagos}
    Let $\cH_0$ be a separable Hilbert space and $(\Lambda, \calA)$ be
    a measurable space.
  Let $\nu$ be a trace-class \povm\ on  $(\Lambda, \calA,\cH_0)$. Then there
  exist a probability space $(\Omega,\cF,\PP)$ and an $\cH_0$-valued
  random \cagos\ $W$ on $(\Lambda, \cA, \Omega, \cF, \PP)$ with
  intensity operator measure $\nu$ such that
  the process $(\pscal{W(A)}{x})_{A\in\cA,x\in\cH_0}$ is a (complex)
  Gaussian process. 
\end{theorem}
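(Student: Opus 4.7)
The plan is to build $W$ explicitly as a $g^{1/2}$-weighted superposition of independent scalar complex Gaussian white noises. The Radon-Nikodym theorem for trace-class \povm's (\Cref{thm:radon-nikodym-povm}) is what makes this work: applied with $\mu = \norm{\nu}_1$, it furnishes a density $g = \rmd \nu / \rmd \norm{\nu}_1 \in L^1(\Lambda, \cA, \cS_1^+(\cH_0), \norm{\nu}_1)$ together with a square root $g^{1/2} \in L^2(\Lambda, \cA, \cS_2(\cH_0), \norm{\nu}_1)$ satisfying $\norm{g(\lambda)}_1 = 1$ for $\norm{\nu}_1$-a.e.\ $\lambda$.

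On any probability space $(\Omega, \cF, \PP)$ rich enough to carry a sequence $(B_n)_{n\geq 1}$ of independent scalar complex Gaussian \caos\ measures on $(\Lambda, \cA)$, each with intensity $\norm{\nu}_1$ (such a space exists by a standard Kolmogorov extension since $\norm{\nu}_1$ is finite and the required covariance kernel $(A,B)\mapsto\norm{\nu}_1(A\cap B)$ is positive definite), fix an orthonormal basis $(e_n)_{n\geq 1}$ of $\cH_0$ and set, for each $A\in\cA$,
\begin{equation}
W(A) \;=\; \sum_{n\geq 1} \int_A g^{1/2}(\lambda)\, e_n \; B_n(\rmd\lambda)\;,
\label{eq:Wdef-prop}
\end{equation}
the sum being taken in $L^2(\Omega, \cF, \cH_0, \PP)$. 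Convergence is guaranteed by the Parseval identity $\sum_n \norm{g^{1/2}(\lambda) e_n}_{\cH_0}^2 = \norm{g^{1/2}(\lambda)}_2^2 = \tr g(\lambda) = \norm{g(\lambda)}_1$, which integrates to $\norm{\nu}_1(\Lambda)<\infty$.

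The covariance structure follows from three elementary facts: independence of the $B_n$'s kills all cross terms; for each $n$, the Bochner integral of a deterministic $\cH_0$-valued integrand $f_n = g^{1/2}e_n$ against the scalar \caos\ $B_n$ yields covariance operator $\int_{A\cap B} f_n(\lambda)\otimes f_n(\lambda)\,\norm{\nu}_1(\rmd\lambda)$; and the pointwise completeness relation $\sum_n (g^{1/2}e_n)\otimes(g^{1/2}e_n) = g^{1/2}\bigl(\sum_n e_n\otimes e_n\bigr)g^{1/2} = g$. Combining these and invoking \Cref{thm:radon-nikodym-povm}~\ref{itm:density-trace-integral} yields
\[
\gramian{W(A)}{W(B)}_{\cM(\Omega,\cF,\cH_0,\PP)} \;=\; \int_{A\cap B} g(\lambda)\,\norm{\nu}_1(\rmd\lambda) \;=\; \nu(A\cap B)\;,
\]
which gives both the Gramian-orthogonality on disjoint sets and the intensity $\nu_W = \nu$. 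Countable additivity of $W$ in $\cM(\Omega,\cF,\cH_0,\PP)$ follows from the identity $\PE\|W(\bigcup_{k>N}A_k)\|^2 = \tr\nu(\bigcup_{k>N}A_k) = \norm{\nu}_1(\bigcup_{k>N}A_k)\to 0$ for disjoint $(A_k)$, using finiteness of $\norm{\nu}_1$. Finally, each $\pscal{W(A)}{x}_{\cH_0}$ is by~\eqref{eq:Wdef-prop} an $L^2$-limit of finite linear combinations of the jointly complex Gaussian variables $\int h \,\rmd B_n$, hence the whole process $(\pscal{W(A)}{x}_{\cH_0})_{A\in\cA,\,x\in\cH_0}$ is complex Gaussian, as a closed Gaussian subspace of $L^2(\PP)$.

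The main obstacle, if any, is bookkeeping: carefully justifying the Fubini-type interchange between the $n$-summation and the Bochner integral against each $B_n$, and matching the $\cS_1(\cH_0)$-valued identity $\sum_n (g^{1/2}e_n)\otimes (g^{1/2}e_n)=g$ to its integrated form under $\norm{\nu}_1$. Both rely only on the Parseval bound above and on \Cref{thm:radon-nikodym-povm}~\ref{itm:density-sqaure-root}--\ref{itm:density-trace-integral}; once they are in place, CAGOS-ness, the intensity identification, and the Gaussian character follow without further work.
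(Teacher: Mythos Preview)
Your construction is correct and gives a genuinely different route from the paper's. The paper proceeds by a direct Kolmogorov-type argument: it observes that
\[
\gamma\bigl((x,A);(y,B)\bigr)\;=\;x^{\adjoint}\nu(A\cap B)\,y\;=\;\gramian{x^{\adjoint}\1_A}{y^{\adjoint}\1_B}_{\nu}
\]
is a legitimate hermitian non-negative definite kernel on $\cH_0\times\cA$ (positivity being read off from the Gramian of \Cref{exple:Lb-normal-pre-hilbert-module}), realizes the corresponding centered circularly-symmetric complex Gaussian process $(Z_{x,A})$, and then sets $W(A)=\sum_{n}Z_{\phi_n,A}\,\phi_n$ for a Hilbert basis $(\phi_n)$ of $\cH_0$. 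Note that the scalar variables $Z_{\phi_n,A}$ are \emph{not} independent across $n$: their joint covariance is precisely $\phi_n^{\adjoint}\nu(A\cap B)\phi_m$, so the full operator-valued correlation is already baked into the Kolmogorov construction.

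Your approach instead factors the covariance through the Radon--Nikodym square root $g^{1/2}$ and pushes all the dependence into deterministic weights, so that the stochastic input is a sequence of \emph{independent} scalar white noises $(B_n)$ with common intensity $\norm{\nu}_1$. This is a bit longer (it invokes \Cref{thm:radon-nikodym-povm} and requires the dominated-convergence bookkeeping you flagged), but it has the advantage of being fully constructive and immediately amenable to simulation or truncation. The paper's argument is shorter because it outsources existence to the abstract Gaussian-process machinery and never needs the density $g$; on the other hand, it leaves the verification that $W$ is $\sigma$-additive in $\cM(\Omega,\cF,\cH_0,\PP)$ as ``straightforward'', whereas you spell out the key estimate $\PE\norm{W(\bigcup_{k>N}A_k)}^2=\norm{\nu}_1(\bigcup_{k>N}A_k)\to0$ explicitly. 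Both proofs ultimately rest on the same Parseval-type identity $\sum_n\langle g(\lambda)e_n,e_n\rangle=\tr g(\lambda)$ (in your case) or $\sum_n\phi_n^{\adjoint}\nu(A)\phi_n=\tr\nu(A)$ (in the paper's case) to control the series defining $W(A)$.
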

\begin{proof}
  Define $\gamma:(\cH_0\times\cA)^2\to\cset$ by of
  $$
  \gamma((x,A);(y,B))= x^\adjoint\nu(A\cap B) y = \gramian {x^\adjoint\1_A}{y^\adjoint\1_B}_{\nu}\;,
  $$
  where we used the Gramian~\eqref{eq:gramian-L2-unbounded} with $\cG_0=\cset$. Then it is easy to see $\gamma$ is hermitian
  non-negative definite in the sense that for all $n\geq1$, 
  $x_1,\dots,x_n\in\cH_0$, $A_1,\dots,A_n\in\cA$ and $a_1,\dots,a_n\in\cset$,
  $$
  \sum_{i,j=1}^na_i\overline{a_j}\gamma((x_i,A_i);(a_j,A_j))\geq0\;.
  $$
  Let $(Z_{x,A})_{(x,A)\in\cH_0\times\cA}$ be the centered
  circularly symmetric Gaussian process complex with covariance
  $\gamma$.  Let $(\phi_n)_{0\leq n< N}$ be a Hilbert basis of $\cH_0$,
  with $N = \dim \cH_0 \in\{1,2,\dots,\infty\}$. It is
  straightforward to show that for all $A\in\cA$,
  $$
  W(A):=\sum_{0\leq n<N}Z_{\phi_n,A}\,\phi_n
  $$
  is well defined in
  $\cH=\cM(\Omega, \cF, \cH_0, \PP)$ and that the so defined $W$ is a
  random \cagos\ with intensity operator measure 
  $\nu$.
\end{proof}

We can now prove \Cref{cor:general-bochn}.

\begin{proof}[\bf Proof of \Cref{cor:general-bochn}]
  The implication
  \ref{item:cor:bochner-gen-proper}$\Rightarrow$\ref{item:cor:bochner-gen-tcpovm}
  follows from \Cref{thm:spectral-analysis-hilbert}. Now suppose
  that~\ref{item:cor:bochner-gen-tcpovm} holds. Let $W$ be the
  Gaussian  \cagos\ measure with intensity operator measure $\nu$
  obtained in \Cref{thm:gaussian-cagos}. Then
  Assertion~\ref{item:cor:bochner-gen-proper} follows from \Cref{lem:cramer-easy}.  
\end{proof}

The proof of \Cref{thm:bochnerop} is mainly contained in \cite{berberian1966naui}. 
\begin{proof}[\bf Proof of \Cref{thm:bochnerop}]
  The equivalence between \ref{itm:pos-def} and \ref{itm:pt} is
  straightforward: to show that
  \ref{itm:pos-def}$\Rightarrow$\ref{itm:pt}, take an arbitrary
  $x\in\cH_0$ with unit norm and set $\aop_i= x\,x_i^\adjoint$  for
  $i=1,\dots,n$. To show that
  \ref{itm:pt}$\Rightarrow$\ref{itm:pos-def}, take, for any
  $x\in\cH_0$, $x_i=\aop_i^\adjoint x$ for
  $i=1,\dots,n$. 
  The equivalence between \ref{itm:pt}, \ref{itm:hnnd} and
  \ref{itm:fourier} is given by \cite[Theorem~3]{berberian1966naui}.
  The lastly stated fact that $\nu$ is uniquely determined by
  \eqref{eq:traceclass-bochner-general} is a consequence of the uniqueness stated
  in the univariate Bochner theorem (recalled in
  \Cref{them:bochner-stadard}) applied to
  $\nu_x:A\mapsto x^\adjoint\nu(A)\,x$ for all $x\in\cH_0$.

  Note that the proof of the implication  \ref{itm:fourier} $\Rightarrow$ \ref{itm:pt} provided in \cite[Theorem~3]{berberian1966naui} uses dilation theory. We hereafter propose an alternative and more elementary proof of this implication.  Suppose that \ref{itm:fourier} holds. The continuity of $\Gamma$
    in \wot\ follows immediately by dominated convergence and we now
    prove that it is of positive type as in \Cref{def:positive-definite-operator-function}.  Take some arbitrary
    $n \in \nset^*$, and $x_1,\cdots, x_n \in \cH_0$. Let us define
    the $\cset^{n \times n}$-valued measure $\mu$ on on
    $(\hat{\lcag}, \borel(\hat{\lcag}))$ by
  $$
  \mu (A )= \begin{bmatrix} \pscal{\nu(A) x_1}{x_1}_{\cH_0} & \cdots & \pscal{\nu(A) x_n}{x_1}_{\cH_0} \\
    \vdots & \ddots & \vdots \\
    \pscal{\nu(A) x_1}{x_n}_{\cH_0} & \cdots & \pscal{\nu(A) x_n}{x_n}_{\cH_0}
  \end{bmatrix}\;.
  $$
  Then, by the Cauchy-Schwartz inequality, for all $i,j \in
  \iseg{1,n}$,  the $\cset$-valued measure
  $\mu_{i,j} : A \mapsto [\mu(A)]_{i,j}$ admits a density $f_{i,j}$
  with respect to the non-negative finite measure
  $\norm{\mu}_1 : A \mapsto \norm{\mu(A)}_1 = \tr(\mu(A))$ and   the matrix-valued function
  $f : \chi \mapsto (f_{i,j}(\chi))_{1 \leq i,j \leq n}$ is
  $\norm{\mu}_1$-a.e. hermitian, non-negative semi-definite since,
  for all $a\in\cset^n$ and $A\in \borel(\hat{\lcag})$,
  $$
  \int_A a^\adjoint f(\chi)a \; \norm{\mu}_1(\rmd\chi)=a^\adjoint\mu(A)a=\left(\sum_{i=1}^na_ix_i\right)^\adjoint\nu(A)\left(\sum_{i=1}^na_ix_i\right)\geq0\;.
  $$
 Then, for
  all $t_1, \cdots, t_n \in \lcag$, we have 
\begin{align*}
  \sum_{i,j = 1}^n \pscal{\Gamma(t_i-t_j) x_i}{x_j}_{\cH_0}
  = \sum_{i,j = 1}^n \int \chi(t_i) \overline{\chi(t_j)} \, \mu_{i,j}(\rmd \chi) 
  &= \sum_{i,j = 1}^n \int \chi(t_i) \overline{\chi(t_j)} f_{i,j}(\chi) \, \norm{\mu}_1(\rmd \chi) \\
  &= \int \underbrace{\left(\sum_{i,j = 1}^n  \chi(t_i) \overline{\chi(t_j)} f_{i,j}(\chi)\right)}_{\geq 0 \; \norm{\mu}_1\text{-a.e.}} \, \norm{\mu}_1(\rmd \chi) \\
  &\geq 0 \; .
\end{align*}
The first line follows from \ref{itm:fourier}, the definition of
$\mu_{i,j}$ above and the definition of the integral as given by  Theorem~9 in \cite{Berberian1966NotesOS}.
The second line follows from the definition of $f_{i,j}$
  and the third line from the above property of the matrix-valued
  function $f$. Hence we have shown \ref{itm:pt} and the proof of the
  implication is concluded. 
\end{proof}

We conclude this section with the proof of \Cref{cor:general-bochn-add}

\begin{proof}[\bf Proof of \Cref{cor:general-bochn-add}]
  By definition of the autocovariance operator function of a weakly
  stationary process, it is straightforward to see that 
  Assertion~\ref{item:cor:general-bochn-add1} implies
  Assertion~\ref{item:cor:general-bochn-add2}. Now, suppose that
  Assertion~\ref{item:cor:general-bochn-add2} holds. By
  \Cref{cor:general-bochn}, we only need to prove
  Assertion~\ref{item:cor:bochner-gen-tcpovm} of
  \Cref{cor:general-bochn}, which is what we almost get in Assertion~\ref{itm:fourier} of
  \Cref{thm:bochnerop}, except that we have to prove that, additionally, $\nu$ is
  trace-class. Applying~(\ref{eq:traceclass-bochner-general}) with
  $h=0$, we get that $\nu(\hat\lcag)=\Gamma(0)$, which is assumed to
  be in $\cS_1(\cH_0)$ in the present
  Assertion~\ref{item:cor:general-bochn-add2}. Thus by \Cref{lem:traceclasspovm},
  $\nu$ is indeed trace-class and the proof is concluded.
\end{proof}

\subsection{Filtering random \cagos\
  measures and proofs of \Cref{sec:pointw-comp-oper}}
\label{sec:proofs-filtering}
We start with the proof of  \Cref{prop:carac-processtransfer}.
\begin{proof}[\bf Proof of \Cref{prop:carac-processtransfer}]
  Recall that $X\in\processtransfer{\Phi}(\Omega,\cF,\PP)$ if and only
  if $X\in\widehat{\cH}^X$.  By \Cref{def:modular-scptral-domain} and
  applying \Cref{prop:charac-L2O}\ref{item:prop:charac-L2O-1} with
  $\Lambda=\hat{\lcag}$, $\cA=\borel(\hat{\lcag})$ and $\nu=\nu_X$, we
  have $X\in\widehat{\cH}^X$ if and only if
  $\Phi g_X^{1/2}\in\cL^2(\tore,\btore,\cS_2(\cH_0),\mu)$.  Since
  $g_X\in L^1(\tore,\btore,\cS_1^+(\cH_0),\mu)$ and
  $\Phi\in\simplemeasfunctionsetarg{\tore, \borel(\tore), \cH_0}$, we
  easily get that $\lambda\mapsto\Phi(\lambda) g_X^{1/2}(\lambda)$ is
  simply measurable on $(\tore,\btore)$ and valued in
  $\cS_2(\cH_0)$. By \Cref{lem:meas-schatten}, we get that
  $\Phi g_X^{1/2}$ is measurable from $(\tore,\btore)$ to
  $(\cS_2(\cH_0),\borel(\cS_2(\cH_0)))$.  Now, for all
  $\lambda\in\tore$, since $\Phi(\lambda)\in\cL_b(\cH_0)$ and
  $g_X(\lambda)\in\cS_1^+(\cH_0)$, we have
  $\norm{\Phi(\lambda) g_X^{1/2}(\lambda)}_2^2=\norm{\Phi(\lambda)
    g_X(\lambda)\Phi^\adjoint(\lambda)}_1$. The result follows.
\end{proof}
Having a clear description of the modular
spectral domain in \Cref{sec:proofs-spectral-rep},
\Cref{prop:compo-inversion-filters-ts} can be seen as a particular
instance of the composition and inversion of operator-valued functions
filtering a general random \cagos\ measure, which we now thoroughly
examine. We first state a straightforward result, whose proof is
omitted.
\begin{proposition}
   \label{cor:filt-rand-cagos} Let $(\Lambda, \calA)$ be a measurable space, $\cH_0$, $\cG_0$ two
  separable Hilbert spaces. Let $W$ be an $\cH_0$-valued random \cagos\ measure on
  $(\Lambda,\calA,\Omega,\cF,\PP)$ with intensity operator
  measure $\nu_W$. Let
  $\Phi \in \mathscr{L}^2(\Lambda, \calA, \cO(\cH_0,\cG_0),
  \nu_W)$. Then the mapping
  \begin{equation}
    \label{eq:filt-W-cagos}
V : A\mapsto   \int_A \Phi \, \rmd W= I_W^{\cG_0}(\1_A\Phi)    
  \end{equation}
is a $\cG_0$-valued random \cagos\ measure on
$(\Lambda,\calA,\Omega,\cF,\PP)$ with intensity operator measure
$$
\Phi \nu_W \Phi^\adjoint : A \mapsto \int_A \Phi \rmd \nu_W
\Phi^\adjoint\;,
$$
which is a well defined trace-class \povm
\end{proposition}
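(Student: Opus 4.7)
The plan is to exploit the Gramian-isometric property of $I_W^{\cG_0}$ established in \Cref{thm:integral-cagos} together with the characterization of square $\nu_W$-integrability from \Cref{prop:charac-L2O}. First, for every $A\in\calA$, the indicator-truncated function $\1_A\Phi$ inherits square $\nu_W$-integrability from $\Phi$: indeed, with $f = \rmd\nu_W/\rmd\norm{\nu_W}_1$, we have $\range(f^{1/2})\subset\domain(\Phi)\subset\domain(\1_A\Phi)$ and $(\1_A\Phi)f^{1/2} = \1_A\,\Phi f^{1/2}$ is pointwise bounded in Hilbert-Schmidt norm by $\Phi f^{1/2}\in\cS_2(\cH_0,\cG_0)$, which lies in $L^2(\Lambda,\calA,\cS_2(\cH_0,\cG_0),\norm{\nu_W}_1)$. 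Hence $V(A) := I_W^{\cG_0}(\1_A\Phi)$ is a well-defined element of $\cM(\Omega,\cF,\cG_0,\PP)$.

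Second, I show $\sigma$-additivity of $V$ in $\cM(\Omega,\cF,\cG_0,\PP)$. For any countable pairwise disjoint family $(A_n)_{n\in\nset}$ with $A = \bigcup_n A_n$, the partial sums $\sum_{n=0}^N \1_{A_n}\Phi$ converge to $\1_A \Phi$ in $\mathsf{L}^2(\Lambda, \calA, \cO(\cH_0, \cG_0), \nu_W)$: by \Cref{prop:charac-L2O}\ref{item:prop:charac-L2O-2} this reduces, via $\Phi\mapsto\Phi f^{1/2}$, to convergence in $L^2(\Lambda, \calA, \cS_2(\cH_0,\cG_0), \norm{\nu_W}_1)$, which follows from dominated convergence using the envelope $\norm{\Phi f^{1/2}}_2^2\in L^1(\norm{\nu_W}_1)$. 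Applying the continuous linear operator $I_W^{\cG_0}$ yields $V(A) = \sum_n V(A_n)$ with convergence in $\cM(\Omega,\cF,\cG_0,\PP)$.

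Third, I check Gramian-orthogonal scatteredness and identify the intensity. For disjoint $A,B\in\calA$, the Gramian-isometry of $I_W^{\cG_0}$ gives
\begin{equation*}
\gramian{V(A)}{V(B)}_{\cM(\Omega, \cF, \cG_0, \PP)}
= \gramian{\1_A\Phi}{\1_B\Phi}_{\nu_W}
= \int \1_A\1_B\,\Phi\,\rmd\nu_W\,\Phi^\adjoint = 0,
\end{equation*}
using \eqref{eq:gramian-L2-unbounded}. Taking $B = A$ identifies the intensity operator measure as $\nu_V(A) = \int_A \Phi\,\rmd\nu_W\,\Phi^\adjoint$, which is precisely the mapping $\Phi\nu_W\Phi^\adjoint$ of the statement.

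It remains to show that $\Phi\nu_W\Phi^\adjoint$ is itself a trace-class \povm. By \Cref{prop:charac-L2O}\ref{item:prop:charac-L2O-2} again, for each $A$,
\begin{equation*}
(\Phi\nu_W\Phi^\adjoint)(A) = \int (\1_A\Phi f^{1/2})(\1_A\Phi f^{1/2})^\adjoint \, \rmd\norm{\nu_W}_1,
\end{equation*}
which is a Bochner integral of a pointwise positive $\cS_1^+(\cG_0)$-valued function, hence lies in $\cS_1^+(\cG_0)$. Positivity of the values and $\sigma$-additivity in $\norm{\cdot}_1$-norm (proved by the same dominated-convergence argument as in the second step, applied in $L^1(\Lambda,\calA,\cS_1(\cG_0),\norm{\nu_W}_1)$) make it a trace-class \povm\ in the sense of \Cref{def:tcpovm} and \Cref{lem:traceclasspovm}. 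The main technical point throughout is the dominated-convergence argument in the $L^2$-space of Hilbert-Schmidt-valued functions, everything else follows mechanically from the Gramian-isometric transfer provided by $I_W^{\cG_0}$.
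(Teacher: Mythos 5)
Your proof is correct. The paper itself omits the proof of this proposition, stating only that the result is straightforward, and your argument supplies exactly the expected details: square $\nu_W$-integrability of $\1_A\Phi$ via the $\Phi\mapsto\Phi f^{1/2}$ reduction of \Cref{prop:charac-L2O}, $\sigma$-additivity of $V$ by dominated convergence in $L^2(\Lambda,\calA,\cS_2(\cH_0,\cG_0),\norm{\nu_W}_1)$ followed by continuity of $I_W^{\cG_0}$, Gramian-orthogonal scatteredness and the identification of the intensity from the Gramian isometry of \Cref{thm:integral-cagos}, and the trace-class property of $\Phi\nu_W\Phi^\adjoint$ from positivity and $\norm{\cdot}_1$-convergence of the Bochner integrals (the last point also follows directly from \Cref{def:cagos}, which already asserts that the intensity operator measure of a Gramian-orthogonally scattered measure is a trace-class positive operator valued measure).
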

The \cagos\ $V$ defined by~(\ref{eq:filt-W-cagos}) is said to admit the
density $\Phi$ with respect to $W$, and we write $\rmd V = \Phi\rmd W$
(or, equivalently, $V(\rmd \lambda)=\Phi(\lambda)W(\rmd \lambda)$). In
the following definition, based on \Cref{cor:filt-rand-cagos}, we use
a signal processing terminology where $\Lambda$ is seen as a set of
frequencies and $\Phi$ is seen as a transfer operator function acting
on the (random) input frequency distribution $W$.  
\begin{definition}[Filter $\filterspecrep{\Phi}(W)$ acting on a random \cagos\ measure in $\specreptransfer{\Phi}$]\label{def:density-cagos}
Let $(\Lambda, \calA)$ be a measurable space, $\cH_0$, $\cG_0$ two
separable Hilbert spaces. For a given transfer operator function
  $\Phi\in \weakmeasfunctionsetarg{\Lambda, \calA, \cH_0, \cG_0}$, we
  denote by $\specreptransfer{\Phi}(\Omega,\cF,\PP)$ the set of 
 $\cH_0$-valued random \cagos\ measures on
  $(\Lambda,\calA,\Omega,\cF,\PP)$ whose intensity operator
  measures $\nu_W$ satisfy  $\Phi \in \mathscr{L}^2(\Lambda, \calA, \cO(\cH_0,\cG_0),
  \nu_W)$. Then, for any $W\in\specreptransfer{\Phi}(\Omega,\cF,\PP)$, we say that the
  random $\cG_0$-valued \cagos\ measure $V$
  defined by~(\ref{eq:filt-W-cagos}) is the output of the filter with
  transfer operator function $\Phi$ applied to the input \cagos\ measure $W$, and we denote
  $V=\filterspecrep{\Phi}(W)$. 
\end{definition}

Now, consider the filtering, using \Cref{def:density-cagos},
$
V=\filterspecrep{\Phi}(W)
$
for a random \cagos\ measure $W$ and a transfer function
$\Phi\in \mathscr{L}^2(\Lambda, \calA, \cO(\cH_0,\cG_0), \nu_W)$. The
goal of this section is, given another separable Hilbert space
$\cI_0$, to characterize the transfer functions $\Psi$ valued in
$\cO(\cG_0,\cI_0)$ which can be used to filter the \cagos\ measure
$V$. Taking $W$ to be the Cramér representation $\hat X$ of a weakly
stationary process $X$, we will get the already stated
\Cref{prop:compo-inversion-filters-ts} on the composition of linear filters
as a byproduct.

According to \Cref{cor:filt-rand-cagos}, $\Psi$ must be
square-integrable with respect to $\nu_V = \Phi \nu_W \Phi^\adjoint$ and
this turns out to be equivalent to checking that $\Psi\Phi$ is square
integrable with respect to $\nu_W$ as stated in the following
theorem. We recall that $\Psi\Phi$ is the pointwise composition, that
is, $\Psi\Phi: \lambda \mapsto\Psi(\lambda)\circ\Phi(\lambda)$ and is
defined whenever the image of $\Phi(\lambda)$ is included in the
domain of $\Psi(\lambda)$.
We first need the following lemma, which will be used in the proof of \Cref{thm:l2-compo-characterization}. 
\begin{lemma}\label{lem:compo-s2}
  Let $\cH_0$, $\cG_0$, $\cI_0$ be separable Hilbert spaces and $\aop \in
  \cO(\cG_0, \cI_0)$, $\bop \in \cK(\cH_0, \cG_0)$. The following assertions hold. 
  \begin{enumerate}[label=(\roman*)]
  \item\label{item:lem:compo-s2:1} $\range(\abs{\bop^\adjoint}) = \range(\bop)$.
  \item\label{item:lem:compo-s2:2} If $\range(\bop) \subset \domain(\aop)$, then $(\aop \bop) (\aop \bop)^\adjoint = (\aop \abs{\bop^\adjoint})(\aop \abs{\bop^\adjoint})^\adjoint$.
  \item\label{item:lem:compo-s2:3} If $\range(\bop) \subset \domain(\aop)$, then $\aop \bop \in \cS_2(\cH_0, \cI_0)$ if and only if $\aop \abs{\bop^\adjoint} \in \cS_2(\cG_0, \cI_0)$. In this case $\norm{\aop\bop}_2 = \norm{\aop \abs{\bop^\adjoint}}_2$. 
  \end{enumerate}
\end{lemma}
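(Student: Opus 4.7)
The plan is to work with the polar and singular value decompositions of the compact operator $\bop$. Write $\bop = V \abs{\bop}$ where $V$ is the partial isometry of the polar decomposition, with initial space $(\ker \bop)^\perp$ and final space $\overline{\range \bop}$; in particular $V^\adjoint V$ is the orthogonal projection onto $(\ker \bop)^\perp = \overline{\range \abs{\bop}}$, so $V^\adjoint V \abs{\bop} = \abs{\bop}$.

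For \ref{item:lem:compo-s2:1}, I would first establish the identity
\[
\abs{\bop^\adjoint} = V \abs{\bop} V^\adjoint,
\]
by observing $\bop\bop^\adjoint = V\abs{\bop}^2 V^\adjoint$ and combining $V^\adjoint V \abs{\bop} = \abs{\bop}$ with functional calculus to identify the unique positive square root. The range identity then follows from two inclusions: $\abs{\bop^\adjoint} y = V\abs{\bop}V^\adjoint y = \bop(V^\adjoint y) \in \range(\bop)$ for all $y \in \cG_0$ gives one direction; conversely, for $x \in \cH_0$, $\abs{\bop^\adjoint}(V x) = V\abs{\bop}V^\adjoint V x = V\abs{\bop}x = \bop x$ (using that $\abs{\bop} x$ lies in the initial space of $V$) gives the reverse inclusion.

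For \ref{item:lem:compo-s2:2}, the hypothesis $\range(\bop) \subset \domain(\aop)$ together with \ref{item:lem:compo-s2:1} ensures that $\aop \abs{\bop^\adjoint}$ is defined on all of $\cG_0$. The identity $\bop\bop^\adjoint = \abs{\bop^\adjoint}^2$ then yields formally
\[
(\aop\bop)(\aop\bop)^\adjoint = \aop\bop\bop^\adjoint\aop^\adjoint = \aop\abs{\bop^\adjoint}^2\aop^\adjoint = (\aop\abs{\bop^\adjoint})(\aop\abs{\bop^\adjoint})^\adjoint.
\]
The chief subtlety, which I expect to be the main obstacle, is that $\aop$ is not assumed bounded, so $\aop^\adjoint$ need not even be densely defined; accordingly this equality must be interpreted either as the equality of the associated positive sesquilinear forms on suitable dense subspaces of $\cI_0$, or simply as the equality of the (possibly infinite) trace-class expressions actually used in \ref{item:lem:compo-s2:3}.

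For \ref{item:lem:compo-s2:3}, I would use the singular value decomposition $\bop = \sum_k \sigma_k \, v_k \otimes u_k$ with $(u_k)$, $(v_k)$ orthonormal systems in $\cH_0$, $\cG_0$ and $\sigma_k > 0$, so that $\abs{\bop^\adjoint} = \sum_k \sigma_k \, v_k \otimes v_k$ and each $v_k \in \range(\bop) \subset \domain(\aop)$. Extending $(u_k)$ and $(v_k)$ to orthonormal bases of $\cH_0$ and $\cG_0$ respectively (on which $\bop$ and $\abs{\bop^\adjoint}$ vanish) and evaluating Hilbert--Schmidt norms along these bases yields
\[
\norm{\aop \bop}_2^2 = \sum_k \sigma_k^2 \, \norm{\aop v_k}_{\cI_0}^2 = \norm{\aop \abs{\bop^\adjoint}}_2^2,
\]
as an identity in $[0,+\infty]$, via $\aop \bop u_k = \sigma_k \aop v_k = \aop\abs{\bop^\adjoint} v_k$. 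The claimed equivalence of Hilbert--Schmidt membership and equality of norms follow immediately.
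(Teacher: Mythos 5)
Your proof is correct and follows essentially the same route as the paper's: both rest on the singular value/polar decomposition of the compact operator $\bop$ and the partial isometry intertwining $\abs{\bop}$, $\bop$ and $\abs{\bop^\adjoint}$ (the paper writes $\bop=\abs{\bop^\adjoint}U^\adjoint$ and cancels $U^\adjoint U$, which is your computation in a slightly different order). The one small refinement is in \ref{item:lem:compo-s2:3}: rather than deducing it from \ref{item:lem:compo-s2:2} via the equivalence $T\in\cS_2 \Leftrightarrow TT^\adjoint\in\cS_1$ as the paper does, you evaluate the Hilbert--Schmidt norms directly along the SVD-adapted orthonormal bases, which sidesteps the delicate interpretation of $(\aop\bop)^\adjoint$ for unbounded $\aop$ that you rightly flag in \ref{item:lem:compo-s2:2}.
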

\begin{proof}
For convenience, we only consider the case where the spaces have
infinite dimensions. The singular values decomposition of $\bop$
yields for two orthonormal sequences
$(\psi_n)_{n\in\nset}\in\cG_0^\nset$ and
$(\phi_n)_{n\in\nset}\in\cH_0^\nset$, 
$$
\bop = \sum_{n \in \nset}  \sigma_n \psi_n \otimes \phi_n
\quad \text{and} \quad
\abs{\bop^\adjoint} = \sum_{n \in \nset} \sigma_n \psi_n \otimes \psi_n \; .
$$
\noindent\textbf{Proof of~\ref{item:lem:compo-s2:1}.}   We have $\range(\bop) =
\set{\sum_{n \in \nset} \sigma_n x_n \psi_n}{(x_n)_{n \in \nset} \in
  \ell^2(\nset)} = \range(\abs{\bop^\adjoint})$.\\
\noindent\textbf{Proof of~\ref{item:lem:compo-s2:2}.}   
 By the first point both compositions $\aop\bop$ and $\aop
 \abs{\bop^\adjoint}$ make sense. Consider the polar decomposition of
 $\bop^\adjoint$ : $\bop^\adjoint = U \abs{\bop^\adjoint}$, with $U=\sum_{n \in \nset}  \phi_n \otimes \psi_n$.
 Then $\bop = \abs{\bop^\adjoint} U^\adjoint$ and 
    $$
    (\aop\bop)(\aop\bop)^\adjoint = \left(\aop
      \abs{\bop^\adjoint}\right) U^\adjoint U \left(\aop
      \abs{\bop^\adjoint}\right)^\adjoint
    = \left(\aop \abs{\bop^\adjoint}\right) \left(\aop \abs{\bop^\adjoint}\right)^\adjoint\;,
    $$
    where we used that $\abs{\bop^\adjoint}U^\adjoint U =
    \abs{\bop^\adjoint}$.\\
    \noindent\textbf{Proof of~\ref{item:lem:compo-s2:3}.}
    We have that  $\aop \bop \in \cS_2(\cH_0, \cI_0)$ if and only if $(\aop\bop)(\aop\bop)^\adjoint \in \cS_1(\cI_0)$, which is equivalent to $\aop \abs{\bop^\adjoint} \in \cS_2(\cG_0,\cI_0)$ by the previous point.
  \end{proof}
We can now derive the main result of this section.
\begin{theorem}\label{thm:l2-compo-characterization}
  Let $(\Lambda, \calA)$ be a measurable space, $\cH_0$, $\cG_0$,
  $\cI_0$ separable Hilbert spaces and $\nu$ a trace-class \povm\ on
  $(\Lambda, \calA, \cH_0)$. Let $\Phi \in \mathscr{L}^2(\Lambda,
  \calA, \cO(\cH_0,\cG_0), \nu)$ and $\Psi \in
  \weakmeasfunctionsetarg{\Lambda, \calA, \cG_0, \cI_0}$.
  Define
  $\Phi \nu \Phi^\adjoint:A\mapsto\int_A\Phi\rmd\nu\Phi^\adjoint=\gramian{\1_A\Phi}{\1_A\Phi}_\nu$,
  which is a trace-class \povm\ on
  $(\Lambda, \calA, \cG_0)$.
  Then
  \begin{equation}
    \label{eq:1thm:l2-compo-characterization}
  \Psi \in \mathscr{L}^2(\Lambda, \calA, \cO(\cG_0,\cI_0), \Phi \nu
  \Phi^\adjoint) \Leftrightarrow \Psi \Phi \in \mathscr{L}^2(\Lambda, \calA,
  \cO(\cH_0,\cI_0), \nu)     \;.
  \end{equation}
  Moreover, the following assertions hold.
  \begin{enumerate}[label=(\alph*)]
  \item\label{item:1ass:thm:l2-compo-characterization} For all $\Psi, \Theta \in \mathscr{L}^2(\Lambda, \calA, \cO(\cG_0,\cI_0), \Phi \nu \Phi^\adjoint)$, we have
    $(\Psi \Phi) \nu (\Theta \Phi)^\adjoint = \Psi (\Phi \nu \Phi^\adjoint)
    \Theta^\adjoint$. 
  \item\label{item:2ass:thm:l2-compo-characterization} The mapping $\Psi
    \mapsto \Psi \Phi$ is a well defined Gramian-isometric operator from
    $\mathsf{L}^2(\Lambda, \calA, \cO(\cG_0, \cI_0), \Phi \nu \Phi^\adjoint)$
    to $\mathsf{L}^2(\Lambda, \calA, \cO(\cH_0, \cI_0), \nu)$. 
  \item\label{item:3ass:thm:l2-compo-characterization} If moreover 
    $\Phi$ is injective $\norm{\nu}_1$-a.e., then $\Phi^{-1} \in \mathscr{L}^2(\Lambda, \calA, \cO(\cG_0, \cH_0), \Phi \nu \Phi^\adjoint)$,  where we define
  $\Phi^{-1}(\lambda):= \left(\Phi(\lambda)_{|\domain(\Phi(\lambda)) \to
      \range(\Phi(\lambda))}\right)^{-1}$ with domain $\range(\Phi(\lambda))$
  for all $\lambda\in\{\Phi\text{ is injective}\}$ and $\Phi^{-1}(\lambda)=0$
  otherwise.
  \end{enumerate}
\end{theorem}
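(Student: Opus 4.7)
The plan is to reduce everything, via Proposition \ref{prop:charac-L2O}, to pointwise statements about compositions with a Hilbert--Schmidt (hence compact) operator, to which Lemma \ref{lem:compo-s2} applies directly. Set $\mu = \norm{\nu}_1$ and $f = \rmd\nu/\rmd\mu$. First I would check that $\Phi\nu\Phi^\adjoint$ really is a trace-class \povm: the mapping $A \mapsto \int_A \Phi\rmd\nu\Phi^\adjoint$ takes values in $\cS_1^+(\cG_0)$ by definition of square $\nu$-integrability, its $\sigma$-additivity in $\norm{\cdot}_1$ follows from the Bochner-integral formulation \eqref{eq:integral-L2-unbounded}, and by \Cref{prop:charac-L2O}\ref{item:prop:charac-L2O-O} it admits the density
\[
h := (\Phi f^{1/2})(\Phi f^{1/2})^\adjoint
\]
with respect to $\mu$, so $\mu$ also dominates $\norm{\Phi\nu\Phi^\adjoint}_1$.

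Next, the heart of the argument is to apply \Cref{lem:compo-s2} pointwise with $\aop = \Psi(\lambda)$ and $\bop = \Phi(\lambda) f(\lambda)^{1/2}$, the latter lying in $\cS_2(\cH_0,\cG_0)\subset\cK(\cH_0,\cG_0)$ for $\mu$-a.e.\ $\lambda$. Since $\abs{\bop^\adjoint} = h^{1/2}$, point \ref{item:lem:compo-s2:1} of that lemma yields $\range(\Phi f^{1/2}) = \range(h^{1/2})$, so — using $\range(f^{1/2})\subset\domain(\Phi)$ $\mu$-a.e.\ — the condition $\range(f^{1/2})\subset\domain(\Psi\Phi)$ is equivalent to $\range(h^{1/2})\subset\domain(\Psi)$. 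Point \ref{item:lem:compo-s2:3} then gives $(\Psi\Phi)f^{1/2}\in\cS_2$ iff $\Psi h^{1/2}\in\cS_2$, with equal Hilbert--Schmidt norms; and point \ref{item:lem:compo-s2:2} gives the identity
\[
\bigl((\Psi\Phi)f^{1/2}\bigr)\bigl((\Psi\Phi)f^{1/2}\bigr)^\adjoint = (\Psi h^{1/2})(\Psi h^{1/2})^\adjoint \quad \mu\text{-a.e.}
\]
Applying \Cref{prop:charac-L2O}\ref{item:prop:charac-L2O-1} with the dominating measure $\mu$ both for $\nu$ and for $\Phi\nu\Phi^\adjoint$ translates the two square-integrability conditions into exactly the same pair of requirements on $\Psi h^{1/2}$, which establishes \eqref{eq:1thm:l2-compo-characterization}.

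For \ref{item:1ass:thm:l2-compo-characterization}, I would observe that the proof of \Cref{lem:compo-s2}\ref{item:lem:compo-s2:2} in fact shows the polarized identity $(\aop\bop)(\aop'\bop)^\adjoint = (\aop\abs{\bop^\adjoint})(\aop'\abs{\bop^\adjoint})^\adjoint$ (since both equal $\aop\bop\bop^\adjoint (\aop')^\adjoint$). Applied with $\aop=\Psi$, $\aop'=\Theta$, $\bop=\Phi f^{1/2}$ and integrated against $\mu$, this gives
\[
(\Psi\Phi)\nu(\Theta\Phi)^\adjoint = \int (\Psi h^{1/2})(\Theta h^{1/2})^\adjoint\,\rmd\mu = \Psi(\Phi\nu\Phi^\adjoint)\Theta^\adjoint,
\]
using \Cref{prop:charac-L2O}\ref{item:prop:charac-L2O-O} twice. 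Point \ref{item:2ass:thm:l2-compo-characterization} is then immediate: $\gramian{\Psi\Phi}{\Theta\Phi}_\nu = \gramian{\Psi}{\Theta}_{\Phi\nu\Phi^\adjoint}$, and the map descends to the quotients because zero norm on one side is zero norm on the other. For \ref{item:3ass:thm:l2-compo-characterization}, $\Phi^{-1}\Phi$ coincides with $\Id_{\cH_0}$ on $\domain(\Phi)\supset\range(f^{1/2})$ on $\{\Phi\text{ injective}\}$, so $(\Phi^{-1}\Phi)f^{1/2} = f^{1/2}\in\cS_2(\cH_0)$ $\mu$-a.e.\ by \Cref{lem:integration-square-root}, whence $\Phi^{-1}\Phi\in\mathscr{L}^2(\Lambda,\cA,\cO(\cH_0),\nu)$; invoking the forward direction of \eqref{eq:1thm:l2-compo-characterization} yields $\Phi^{-1}\in\mathscr{L}^2(\Lambda,\cA,\cO(\cG_0,\cH_0),\Phi\nu\Phi^\adjoint)$.

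The main obstacle I anticipate is the $\cO$-measurability of $\Phi^{-1}$ required in \ref{item:3ass:thm:l2-compo-characterization}: verifying condition \ref{itm:O-meas-i} that $\{\lambda : y\in\range(\Phi(\lambda))\} \in \calA$ for every $y\in\cG_0$, and producing a simply measurable approximating sequence \ref{itm:O-meas-ii}. I would handle this by working on the measurable set where $\Phi$ is injective and invoking a measurable-selection or spectral-decomposition argument analogous to Lemma~2 in \cite[Section~3.4]{yuichiro-multidim} — applying it to the self-adjoint compact operator $\Phi f^{1/2}(\Phi f^{1/2})^\adjoint = h$ to pick measurable eigenelements, then reconstructing a simply measurable approximation of $\Phi^{-1}$ from truncated pseudo-inverses of $\Phi f^{1/2}$ composed appropriately.
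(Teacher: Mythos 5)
Your proof follows the paper's argument essentially verbatim: reduce both square-integrability conditions to pointwise statements about $\Psi$ composed with the compact operator $\Phi g^{1/2}$ via \Cref{prop:charac-L2O}, apply \Cref{lem:compo-s2} with $\bop=\Phi(\lambda) g^{1/2}(\lambda)$ (whose $\abs{\bop^\adjoint}$ is exactly the square root of the density of $\Phi\nu\Phi^\adjoint$), and deduce assertions (a)--(c) from the resulting pointwise identities, including the same polarized form of \Cref{lem:compo-s2}\ref{item:lem:compo-s2:2} that the paper implicitly uses for (a). The one place you go beyond the paper is in flagging the $\cO$-measurability of $\Phi^{-1}$ needed in (c) — a point the published proof passes over in silence — and your proposed measurable-selection argument is an appropriate way to close that gap.
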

\begin{proof}
  Let $\mu$ be a dominating measure for $\norm{\nu}_1$ and
  $g = \frac{\rmd \nu}{\rmd \mu}$, then, by definition of
  $\Phi \nu \Phi^\adjoint$, $\mu$ also dominates
  $\norm{\Phi \nu \Phi^\adjoint}_1$ and
  $\frac{\rmd \Phi \nu \Phi^\adjoint}{\rmd \mu} = (\Phi g^{1/2})(\Phi
  g^{1/2})^\adjoint $. Hence,
  $\left( \frac{\rmd \Phi \nu \Phi^\adjoint}{\rmd \mu} \right)^{1/2} =
  \abs{(\Phi g^{1/2})^\adjoint}$ and we get, by
  \Cref{prop:charac-L2O},
  \begin{align*}
    \Psi \in \mathscr{L}^2(\Lambda, \calA, \cO(\cH_0,\cI_0), \Phi \nu \Phi^\adjoint)
    &\Leftrightarrow
      \begin{cases} \range \abs{(\Phi g^{1/2})^\adjoint }  \subset \domain(\Psi) \; \; \mu\text{-a.e.} \\
        \Psi \abs{(\Phi g^{1/2})^\adjoint}  \in \cL^2(\Lambda, \calA, \cS_2(\cG_0, \cI_0), \mu)
      \end{cases} \\
    &\Leftrightarrow
      \begin{cases} \range g^{1/2} \subset \domain(\Psi \Phi) \; \; \mu\text{-a.e.} \\
        \Psi \Phi g^{1/2} \in \cL^2(\Lambda, \calA, \cS_2(\cH_0, \cI_0), \mu)
      \end{cases} \\
    &\Leftrightarrow \Psi \Phi \in \mathscr{L}^2(\Lambda, \calA, \cO(\cH_0,\cI_0), \nu) \;,
  \end{align*}
  where the second equivalence comes from \Cref{lem:compo-s2} and the
  fact that for all $\lambda \in \Lambda$,
  $\domain(\Psi(\lambda) \Phi(\lambda))$ is the preimage of
  $\domain(\Psi(\lambda))$ by $\Phi(\lambda)$ which gives that
  $\range(g^{1/2}(\lambda)) \subset \domain(\Psi(\lambda) \Phi(\lambda))$ if and
  only if $\range(\Phi(\lambda) g^{1/2}(\lambda))
  \subset\domain(\Psi(\lambda))$. 

  We now prove
  Assertion~\ref{item:1ass:thm:l2-compo-characterization}. For
  $\Psi, \Theta \in \mathscr{L}^2(\Lambda, \calA, \cO(\cG_0, \cI_0),
  \Phi \nu \Phi^\adjoint)$ and $A \in \calA$, we have
  \begin{align*}
    (\Psi \Phi) \nu (\Theta \Phi)^\adjoint (A)
    = \int_A \left(\Psi \Phi g^{1/2}\right)\left(\Theta \Phi g^{1/2}\right)^\adjoint \, \rmd \mu 
    &= \int_A \left(\Psi \abs{(\Phi g^{1/2})^\adjoint}\right)\left(\Theta \abs{(\Phi g^{1/2})^\adjoint}\right)^\adjoint \, \rmd \mu \\
   &=  \Psi (\Phi \nu \Phi^\adjoint) \Theta^\adjoint (A) \;,
  \end{align*}
  where  the second equality holds by \Cref{lem:compo-s2}. 
  Assertion~\ref{item:1ass:thm:l2-compo-characterization} follows as
  well as 
  Assertion~\ref{item:2ass:thm:l2-compo-characterization} by taking
  $A = \Lambda$. Finally, to show
  Assertion~\ref{item:3ass:thm:l2-compo-characterization}, suppose
  that $\Phi$ is injective $\norm{\nu}_1$-a.e. then
  $\Phi^{-1} \Phi : \lambda \mapsto \Id_{\cH_0} \indi{\Phi(\lambda)
    \text{ is injective}}$ is in
  $\mathscr{L}^2(\Lambda, \calA, \cO(\cH_0), \nu)$ which gives that
  $\Phi^{-1} \in \mathscr{L}^2(\Lambda, \calA, \cO(\cG_0, \cH_0), \Phi
  \nu \Phi^\adjoint)$ by
  Assertion~\ref{item:1ass:thm:l2-compo-characterization}.
\end{proof}
We deduce the following corollary on the composition and inversion for random \cagos\ measures. 
\begin{corollary}[Composition and inversion of filters on random \cagos\ measures]\label{prop:compo-inversion-filters-cagos}
  Let $(\Lambda, \calA)$ be a measurable space, $\cH_0$, $\cG_0$ two
separable Hilbert spaces, and
  $\Phi\in \weakmeasfunctionsetarg{\Lambda, \calA, \cH_0,
    \cG_0}$. Let $W\in\specreptransfer{\Phi}(\Omega,\cF,\PP)$ with intensity operator
  measure $\nu_W$. Then three
  following assertions hold.
  \begin{enumerate}[label=(\roman*)]
  \item \label{item:cor:inclusion-filtered-space} For any
    separable Hilbert space  $\cI_0$, we have $\cH^{\filterspecrep{\Phi}(W), \cI_0} \gramianisometricallyembedded \cH^{W, \cI_0}$.
  \item \label{item:cor:composition-filters-hilbert} For any
    separable Hilbert space  $\cI_0$ and all $\Psi \in
    \weakmeasfunctionsetarg{\Lambda, \calA, \cG_0, \cI_0}$, we have
    $W \in \specreptransfer{\Psi \Phi}(\Omega,\cF,\PP)$ if and only if
    $\filterspecrep{\Phi}(W) \in \specreptransfer{\Psi}(\Omega,\cF,\PP)$, and in this
    case, we have
  \begin{equation}\label{eq:compo-filt-cagos}
  \filterspecrep{\Psi} \circ \filterspecrep{\Phi}(W) = \filterspecrep{\Psi \Phi}(W).
  \end{equation}
\item \label{item:cor:inversion-filters-hilbert} Suppose that $\Phi$
  is injective $\norm{\nu_W}_1$-a.e. Then
  $W = \filterprocess{\Phi^{-1}}\circ\filterprocess{\Phi}(W)$, where
  $\Phi^{-1}$ is defined as in
  Assertion~\ref{item:3ass:thm:l2-compo-characterization} of
  \Cref{thm:l2-compo-characterization}. Moreover,
  Assertion~\ref{item:cor:inclusion-filtered-space} above holds with
 $\gramianisometricallyembedded$ replaced by
  $\gramianisomorphic$.
  \end{enumerate}
\end{corollary}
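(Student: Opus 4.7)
The key ingredient is \Cref{thm:l2-compo-characterization}, which was specifically crafted for this corollary: it identifies the condition $\Psi\in\mathscr{L}^2(\Lambda,\calA,\cO(\cG_0,\cI_0),\Phi\nu\Phi^\adjoint)$ with $\Psi\Phi\in\mathscr{L}^2(\Lambda,\calA,\cO(\cH_0,\cI_0),\nu)$, and moreover provides that the map $\Psi\mapsto\Psi\Phi$ is a Gramian-isometry between the corresponding modular $L^2$-spaces with the composition identity $(\Psi\Phi)\nu(\Theta\Phi)^\adjoint=\Psi(\Phi\nu\Phi^\adjoint)\Theta^\adjoint$. The whole proof is a translation of these facts into the stochastic language through the Gramian-Cramér integral $I_W^{\cdot}$ of \Cref{thm:integral-cagos}.

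Setting $V=\filterspecrep{\Phi}(W)$, \Cref{cor:filt-rand-cagos} gives $\nu_V=\Phi\nu_W\Phi^\adjoint$. Applying \Cref{thm:l2-compo-characterization} with $\nu=\nu_W$ yields the equivalence $V\in\specreptransfer{\Psi}(\Omega,\cF,\PP)\Leftrightarrow W\in\specreptransfer{\Psi\Phi}(\Omega,\cF,\PP)$, which is the first half of~\ref{item:cor:composition-filters-hilbert}. For~(\ref{eq:compo-filt-cagos}), consider the operator
\[
T:\Psi\longmapsto I_W^{\cI_0}(\Psi\Phi)
\]
from $\mathsf{L}^2(\Lambda,\calA,\cO(\cG_0,\cI_0),\nu_V)$ to $\cM(\Omega,\cF,\cI_0,\PP)$. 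By \Cref{thm:l2-compo-characterization}\ref{item:2ass:thm:l2-compo-characterization} and the fact that $I_W^{\cI_0}$ is Gramian-isometric, $T$ is Gramian-isometric. The plan is to invoke the uniqueness statement in \Cref{thm:integral-cagos} to identify $T$ with $I_V^{\cI_0}$; this requires the intertwining identity
\begin{equation}\label{eq:plan-intertwine}
T(\1_A\bop)=\bop\,V(A),\qquad A\in\calA,\ \bop\in\cL_b(\cG_0,\cI_0),
\end{equation}
which amounts to $I_W^{\cI_0}(\1_A\bop\Phi)=\bop\,I_W^{\cG_0}(\1_A\Phi)$. I would prove this small sub-lemma by the same uniqueness device: both sides, viewed as functions of $\Phi\in\mathsf{L}^2(\Lambda,\calA,\cO(\cH_0,\cG_0),\nu_W)$, are Gramian-isometric into $\cM(\Omega,\cF,\cI_0,\PP)$ (after composition with the $\cL_b(\cI_0)$-linear map induced by $\bop$ on the right-hand side, using the elementary fact that left multiplication by $\bop$ is an $\cL_b(\cG_0)$-equivariant morphism whose restriction to $\cH^{W,\cG_0}$ is Gramian-contractive), and they coincide on simple integrands $\1_B\aop$, $\aop\in\cL_b(\cH_0,\cG_0)$, by direct computation. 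Once~(\ref{eq:plan-intertwine}) is established, $T=I_V^{\cI_0}$, and evaluating both at $\1_A\Psi$ yields $\filterspecrep{\Psi}(V)(A)=I_V^{\cI_0}(\1_A\Psi)=I_W^{\cI_0}(\1_A\Psi\Phi)=\filterspecrep{\Psi\Phi}(W)(A)$, which is~(\ref{eq:compo-filt-cagos}).

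Assertion~\ref{item:cor:inclusion-filtered-space} is an immediate consequence: $\cH^{V,\cI_0}$ is Gramian-isometrically isomorphic to $\mathsf{L}^2(\Lambda,\calA,\cO(\cG_0,\cI_0),\nu_V)$ via $I_V^{\cI_0}$ (\Cref{thm:integral-cagos}), the map $\Psi\mapsto\Psi\Phi$ is Gramian-isometric into $\mathsf{L}^2(\Lambda,\calA,\cO(\cH_0,\cI_0),\nu_W)$ (\Cref{thm:l2-compo-characterization}\ref{item:2ass:thm:l2-compo-characterization}), and the latter is Gramian-isometrically isomorphic to $\cH^{W,\cI_0}$ via $I_W^{\cI_0}$; composing these three embeddings provides the desired Gramian-isometric embedding, which by~(\ref{eq:plan-intertwine}) is just the natural inclusion. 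For~\ref{item:cor:inversion-filters-hilbert}, \Cref{thm:l2-compo-characterization}\ref{item:3ass:thm:l2-compo-characterization} yields $\Phi^{-1}\in\mathscr{L}^2(\Lambda,\calA,\cO(\cG_0,\cH_0),\nu_V)$; then applying~(\ref{eq:compo-filt-cagos}) with $\Psi=\Phi^{-1}$ gives $\filterspecrep{\Phi^{-1}}\circ\filterspecrep{\Phi}(W)=\filterspecrep{\Phi^{-1}\Phi}(W)$, and since $\Phi^{-1}\Phi$ coincides $\norm{\nu_W}_1$-a.e.\ with $\Id_{\cH_0}$ on the complement of $\{\Phi(\lambda)=0\}$ (which is $\norm{\nu_W}_1$-negligible by injectivity), this filter is the identity filter and returns $W$. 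The embedding in~\ref{item:cor:inclusion-filtered-space} becomes onto because $\Psi\mapsto\Psi\Phi$ is then bijective with inverse $\Theta\mapsto\Theta\Phi^{-1}$, upgrading $\gramianisometricallyembedded$ to $\gramianisomorphic$.

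The main obstacle is thus the intertwining identity~(\ref{eq:plan-intertwine}): it is the only step that is not a purely formal pull-back through \Cref{thm:l2-compo-characterization}, and it requires one to carefully navigate the three integration operators $I_W^{\cG_0}$, $I_W^{\cI_0}$, $I_V^{\cI_0}$ living on different normal Hilbert modules, exploiting $\cL_b$-linearity and the density statement of \Cref{thm:l2op-density} to reduce to simple integrands.
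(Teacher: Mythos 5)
Your proposal is correct and follows essentially the same route as the paper's proof: both reduce \eqref{eq:compo-filt-cagos}, via the Gramian-isometry of $\Psi\mapsto\Psi\Phi$ from \Cref{thm:l2-compo-characterization} and the density of simple integrands (\Cref{thm:l2op-density}), to the intertwining identity $I_W^{\cI_0}(\1_A\bop\Phi)=\bop\,I_W^{\cG_0}(\1_A\Phi)$, which is then checked on simple $\Phi$ and extended by continuity. The only cosmetic deviation is in~\ref{item:cor:inversion-filters-hilbert}, where you argue surjectivity of $\Psi\mapsto\Psi\Phi$ directly instead of reversing the roles of $W$ and $V=\filterspecrep{\Phi}(W)$ in~\ref{item:cor:inclusion-filtered-space}; both yield the upgrade to $\gramianisomorphic$.
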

\begin{proof}
{\noindent\bf Proof of
  Assertion~\ref{item:cor:inclusion-filtered-space}.} This follows from Assertion~\ref{item:2ass:thm:l2-compo-characterization} of \Cref{thm:l2-compo-characterization} and \Cref{thm:integral-cagos}. \\
{\noindent\bf Proof of Assertion~\ref{item:cor:composition-filters-hilbert}.}
 If $W \in \specreptransfer{\Phi}(\Omega,\cF,\PP)$, then the equivalence between
  $W \in \specreptransfer{\Psi \Phi}(\Omega,\cF,\PP)$ and
  $\filterspecrep{\Phi}(W) \in \specreptransfer{\Psi}(\Omega,\cF,\PP)$ is just another
  formulation of the equivalence
  \eqref{eq:1thm:l2-compo-characterization} with $\nu=\nu_W$. Suppose
  that it holds and
  set  $V := \filterspecrep{\Phi}(W)$ so that
  $\nu_V=\Phi\nu\Phi^\adjoint$ and
  \eqref{eq:compo-filt-cagos} means that,
  for all $\Psi \in \cL^2(\Lambda, \calA, \cO(\cG_0, \cI_0), \nu_V)$
  and $A \in \calA$,
  $\int_A \Psi \, \rmd V = \int_A \Psi \Phi \, \rmd W$.
  Replacing $\Psi$ by $\Psi\1_A$, it is sufficient to show this
  identity with $A=\Lambda$. Using that the integral with respect to a
  random \cagos\ measure is Gramian-isometric and
  Assertion~\ref{item:2ass:thm:l2-compo-characterization} of
  \Cref{thm:l2-compo-characterization}, the mappings  $\Psi\mapsto\int
  \Psi \, \rmd V$ and $\Psi\mapsto\int \Psi \Phi \, \rmd W$ are 
  Gramian-isometric from $\mathsf{L}^2(\Lambda, \calA, \cO(\cG_0,
  \cI_0), \Phi \nu_W \Phi^\adjoint)$ to $\cM(\Omega, \cF, \cI_0, \PP)$. Hence by \Cref{thm:l2op-density},
  they coincide on the whole space if they coincide on all
  $\Psi=\1_A\aop$ for $A\in\cA$ and $\aop\in\cL_b(\cG_0, \cI_0)$. To
  conclude the proof of
  Assertion~\ref{item:cor:composition-filters-hilbert}, it is thus
  enough to prove that,  for all $A\in\cA$ and $\aop\in\cL_b(\cG_0,
  \cI_0)$,
  $
    \int_A \aop \, \rmd V = \int_A \aop \Phi \, \rmd W \;.
  $
  This identity follows from the definition of $V$ and the fact that
  on both sides the operator $\aop$ can be moved in front of the
  integrals. This latter fact directly follows from the definition of the
  integral for the left-hand side and for the right-hand side when
  $\Phi=\1_B$ for some $B\in\cA$, which extends to all $\Phi$ by
  observing that $\Phi\mapsto \int \aop \Phi \, \rmd W$ and
  $\Phi\mapsto  \aop\int \Phi \, \rmd W$ are continuous on  $\mathsf{L}^2(\Lambda, \calA, \cO(\cH_0, \cG_0), \nu_W)$.
  
  {\noindent\bf Proof of
    Assertion~\ref{item:cor:inversion-filters-hilbert}.} Continuing
  with the setting of the proof of the previous point, we now suppose that $\Phi$ is injective $\norm{\nu_W}_1$-a.e.
  Assertions~\ref{item:3ass:thm:l2-compo-characterization} and
  \ref{item:1ass:thm:l2-compo-characterization} of
  \Cref{thm:l2-compo-characterization} give that
  $\Phi^{-1} \in \mathscr{L}^2(\Lambda, \calA, \cO(\cG_0, \cH_0),
  \nu_V)$ (\textit{i.e.} $V \in \specreptransfer{\Phi^{-1}}(\Omega,\cF,\PP)$) and
  $\Phi^{-1} \nu_V \left(\Phi^{-1}\right)^\adjoint = \nu_W$. Hence,
  writing Relation \eqref{eq:compo-filt-cagos} with
  $\Psi = \Phi^{-1}$, we get
  $\filterspecrep{\Phi^{-1}}(V) = \filterspecrep{\Phi^{-1} \Phi}(W) =
  W$. Moreover, reversing the roles of $W$ and $V$ in
  assertion~\ref{item:cor:inclusion-filtered-space} gives the
  embedding $\cH^{W, \cI_0} \gramianisometricallyembedded
  \cH^{\filterspecrep{\Phi}(W), \cI_0}$ which, with
  Assertion~\ref{item:cor:inclusion-filtered-space}, allow us to
  conclude that $\cH^{W, \cI_0}\gramianisomorphic
  \cH^{\filterspecrep{\Phi}(W), \cI_0}$. 
\end{proof}
We conclude this section with the proof of
\Cref{prop:compo-inversion-filters-ts}. 
\begin{proof}[\bf Proof of \Cref{prop:compo-inversion-filters-ts}]
  Using the Gramian-unitary operator between the
  modular time domain and the modular spectral domain, this result is
  a direct application of \Cref{prop:compo-inversion-filters-cagos}
  with $\Lambda=\hat\lcag$ and $\cA=\borel(\hat{\lcag})$ and
  $W=\hat X$.
\end{proof}
\subsection{Proofs of \Cref{sec:cram-karh-loeve}}
\label{sec:proofs-crefs-karh}

The goal of this section is to provide  a proof of
\Cref{lem:eigendecomp-tc-povm}. Before that, let us recall essential facts
about the diagonalization of compact positive operators.
Let $\cH_0$ be a separable Hilbert space of dimension $N \in \{1, \cdots, +\infty\}$, $(\Lambda, \cA)$ be a
measurable space and
$\Phi \in \simplemeasfunctionsetarg{\Lambda, \cA, \cH_0}$ such that
for all $\lambda \in \Lambda$, $\Phi(\lambda) \in \cS_1^+(\cH_0)$. Then, in this case, for any $\lambda \in \Lambda$, $\Phi(\lambda)$
admits the eigendecomposition 
\begin{equation}\label{eq:eigendecomposition}
\Phi(\lambda) = \sum_{0 \leq n < N} \sigma_n(\lambda) \phi_n(\lambda) \otimes \phi_n(\lambda) \;,
\end{equation}
where the series converges in operator norm and the family
$(\phi_n(\lambda))_{0 \leq n  < N}$ is orthonormal. Moreover, we have
$$
\tr(\Phi(\lambda))= \sum_{0 \leq n < N} \sigma_n(\lambda)  <
+\infty\;.
$$
The following theorem shows that such a decomposition can be
constructed in a way which makes the eigenvalues and eigenvectors
measurable as functions of $\lambda$. We recall that the  weak topology on
$\cH_0$ is defined as the smallest topology which makes the functions
$\set{x^\adjoint}{x \in \cH_0}$ continuous.

\begin{theorem}\label{thm:measurable-eigendecomp}
  Let $\cH_0$ be a separable Hilbert space and $(\Lambda, \cA)$ be a
  measurable space.
  Let $\Phi \in \simplemeasfunctionsetarg{\Lambda, \cA, \cH_0}$ such
  that for all $\lambda \in \Lambda$,
  $\Phi(\lambda) \in \cS_1^+(\cH_0)$. Then the pairs
  $\set{(\sigma_n, \phi_n)}{0 \leq n < N}$ in
  \eqref{eq:eigendecomposition} can be taken so that for all
  $0 \leq n < N$, $\sigma_n$ is measurable from $(\Lambda, \cA)$ to
  $(\rset^+, \borel(\rset^+))$ and $\phi_n$ is measurable from
  $(\Lambda,\cA)$ to $(\cH_0, \borel(\cH_0))$.
\end{theorem}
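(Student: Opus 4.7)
The proof proceeds by a recursive extraction of the largest eigenvalue and its associated spectral projector, followed by a measurable Gram--Schmidt procedure inside each finite-dimensional eigenspace. The three ingredients are: (i) a countable-supremum formula for the operator norm, (ii) a power-iteration formula for the projector onto the top eigenspace, and (iii) a carefully bookkept Gram--Schmidt.

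\textbf{Step 1 (top eigenvalue).} Set $\sigma_0(\lambda) := \|\Phi(\lambda)\|$. Since $\Phi(\lambda)\in\cS_1^+(\cH_0)$, we have $\sigma_0(\lambda) = \sup_{x\in\bar B_{0,1}} \pscal{\Phi(\lambda)x}{x}_{\cH_0}$, and the integrand is norm-continuous in $x$. Hence the supremum can be restricted to a fixed countable norm-dense subset of $\bar B_{0,1}$. For each such $x$, $\lambda\mapsto\pscal{\Phi(\lambda)x}{x}_{\cH_0}$ is measurable by simple measurability of $\Phi$, so $\sigma_0$ is measurable.

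\textbf{Step 2 (top spectral projector).} Let $P_0(\lambda)$ denote the orthogonal projection onto $\ker(\Phi(\lambda)-\sigma_0(\lambda)\Id_{\cH_0})$ if $\sigma_0(\lambda)>0$, and $P_0(\lambda):=0$ otherwise. Using the eigendecomposition \eqref{eq:eigendecomposition}, the classical identity $(\Phi(\lambda)/\sigma_0(\lambda))^n\to P_0(\lambda)$ in operator norm holds (the normalized operator is compact positive with norm $1$, and all other eigenvalues lie in $[0,\sigma_1/\sigma_0)\subset[0,1)$, where $\sigma_1$ denotes the largest eigenvalue strictly smaller than $\sigma_0$). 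Simple measurability is preserved under composition (approximate one factor by simple functions and use continuity of $\Phi(\lambda)$) and pointwise limits, so $P_0\in\simplemeasfunctionsetarg{\Lambda,\cA,\cH_0}$.

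\textbf{Step 3 (measurable Gram--Schmidt inside $\range P_0(\lambda)$).} Fix a Hilbert basis $(\psi_k)_{0\leq k<N}$ of $\cH_0$. Since $P_0(\lambda)$ is finite rank with $\range P_0(\lambda)=\cspan{P_0(\lambda)\psi_k : 0\leq k<N}$, apply a stage-wise Gram--Schmidt: inductively in $k$, form $v_k(\lambda) := P_0(\lambda)\psi_k - \sum_{j} \pscal{P_0(\lambda)\psi_k}{\phi_j(\lambda)}_{\cH_0}\phi_j(\lambda)$ where the sum runs over the eigenvectors already selected; the measurable set $\{v_k(\lambda)\neq0\}$ marks the steps at which we accept a new orthonormal vector $\phi_{n(\lambda)}(\lambda):=v_k(\lambda)/\|v_k(\lambda)\|_{\cH_0}$ and assign $\sigma_{n(\lambda)}(\lambda):=\sigma_0(\lambda)$, incrementing the counter $n(\lambda)$. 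Measurability of everything (including the counter) is preserved because the tests $v_k(\lambda)\neq0$ are measurable events.

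\textbf{Step 4 (recursion and padding).} Replace $\Phi(\lambda)$ by the positive trace-class operator $\Phi^{(1)}(\lambda):=\Phi(\lambda)-\sigma_0(\lambda)P_0(\lambda)$ (whose top eigenvalue is the next distinct eigenvalue of $\Phi(\lambda)$) and re-apply Steps 1--3, appending its eigenpairs to the list. Iterate. If at some stage $\Phi^{(m)}(\lambda)=0$ but fewer than $N$ eigenvectors have been produced, pad the remaining slots with zero eigenvalues and with an orthonormal completion of the already-produced family, obtained by Gram--Schmidt on $(\psi_k)$ projected onto the orthogonal complement of $\lspan(\phi_0(\lambda),\dots,\phi_{n-1}(\lambda))$; all these operations remain measurable in $\lambda$. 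The main obstacle is bookkeeping: the rank of $P_0(\lambda)$ and more generally the indices at which one passes from one eigenvalue to the next vary with $\lambda$, and one must verify that these indices and the induction step itself stay measurable; the countable Gram--Schmidt construction in Step 3 is what makes this transparent, because it replaces an ambiguous choice of basis by a canonical one determined by the fixed sequence $(\psi_k)$.
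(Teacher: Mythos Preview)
Your argument is correct and takes a genuinely different route from the paper's. The paper relies on a measurable selection theorem: it equips the closed unit ball $\bar B_{0,1}$ with the weak topology (compact and metrizable by Banach--Alaoglu and separability), shows that $x\mapsto\pscal{\Phi(\lambda)x}{x}_{\cH_0}$ is weakly continuous on $\bar B_{0,1}$ whenever $\Phi(\lambda)\in\cS_1^+(\cH_0)$, and then applies the Measurable Maximum Theorem of Aliprantis--Border to produce, at each step $n$, a measurable maximizer $\phi_n(\lambda)$ of this quadratic form over $\bar B_{0,1}\cap\lspan{\phi_0(\lambda),\dots,\phi_{n-1}(\lambda)}^\perp$. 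Your approach trades the selection theorem for an explicit construction (power iteration for the top spectral projector, then canonical Gram--Schmidt against a fixed basis). What the paper's route buys is a clean recursion extracting exactly one eigenvector per step, so no variable-multiplicity bookkeeping arises; what yours buys is elementarity---no black-box selection result is invoked, and in fact only compactness of $\Phi(\lambda)$ is needed for the power iteration to converge. The bookkeeping issue you flag is real but resolvable exactly as you indicate (the acceptance indicators and cumulative ranks are all measurable, so for each fixed $n$ one obtains a countable measurable partition of $\Lambda$ determining which candidate becomes $\phi_n$). A small simplification worth noting: if in Step~3 you extract only the \emph{first} accepted Gram--Schmidt vector from $\range P_0(\lambda)$, subtract the rank-one piece $\sigma_0\,\phi_0\otimes\phi_0$, and then recurse, the iteration becomes one-vector-per-step and the variable-rank tracking disappears.
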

\begin{proof}
  The construction of the eigenvalues and eigenvectors is done
  iteratively using the Measurable Maximum Theorem
  (see \cite[Theorem~18.19]{hitchiker-infinite-dimension}) on
  $\Lambda\times\bar B_{0,1}$, where $\bar B_{0,1}$ denotes the closed
  unit ball of $\cH_0$, which is compact metrizable for the weak
  topology by the Banach-Alaoglu theorem and \cite[Theorem~6.32]{hitchiker-infinite-dimension}. As in
  \cite[Definition~17.1]{hitchiker-infinite-dimension}, a
  \emph{correspondence} $\varphi$ from $\Lambda$ to $\bar B_{0,1}$,
  denoted by $\varphi : \Lambda \twoheadrightarrow \bar B_{0,1}$, is a
  mapping which assigns each element of $\Lambda$ to a subset of
  $\bar B_{0,1}$.
  
  \noindent\underline{\bf Construction of $(\sigma_1,\phi_1)$ :} Define 
  $$
  f : \fundef{\Lambda \times \bar B_{0,1} & \to & \rset_+ \\ (\lambda, x) & \mapsto & \pscal{\Phi(\lambda)x}{x}_{\cH_0}}\;.
  $$
  Then, for all $x\in\bar B_{0,1}$, the mapping $\lambda\mapsto f(\lambda,x)$ is measurable. Moreover, for all $\lambda\in\Lambda$, considering the eigendecomposition of $\Phi(\lambda)$, the mapping $x\mapsto f(\lambda,x)$ can be written as the uniform limit of continuous functions (for the weak topology on $\bar B_{0,1}$) hence it is continuous for the weak topology on $\bar B_{0,1}$. Finally, the correspondence
  $$
  \varphi : \fundef{\Lambda \twoheadrightarrow \bar B_{0,1} \\ \lambda \mapsto \bar B_{0,1}}
  $$
  is weakly measurable (in the sense of
  \cite[Definition~18.1]{hitchiker-infinite-dimension}) with nonempty
  compact values (for the weak topology).  Therefore the Measurable
  Maximum Theorem \cite[Theorem~18.19]{hitchiker-infinite-dimension}
  gives that
  $m : \lambda \mapsto \max_{x \in \bar B_{0,1}} f(\lambda,x)$ is
  measurable and that there exists a function
  $g : \Lambda \to \bar B_{0,1}$ such that for all
  $\lambda \in \Lambda$,
  $g(\lambda) \in \argmax_{x \in \bar B_{0,1}} f(\lambda,x)$ and $g$
  is measurable from $\Lambda$ to $\bar B_{0,1}$ endowed with the
  Borel $\sigma$-field $\borel_w(\cH_0)$ generated by the weak topology.  This implies
  the usual measurability with respect to the $\sigma$-field $\borel(\cH_0)$ generated by the norm topology  because, for all $x\in\cH_0$, the mapping $y\mapsto\norm{x-y}_{\cH_0}$ is measurable  from  $(\cH_0,\borel_w(\cH_0))$ to $(\rset_+,\borel(\rset_+))$. 
  We set $\sigma_0 = m$ and $\phi_0 = g$. Then, from the definitions
  of $f, m$ and $g$, that $\sigma_0(\lambda)$ is the largest
  eigenvalue of $\Phi(\lambda)$ and that $\phi_0(\lambda)$ is an
  eigenvector with eigenvalue $\sigma_0(\lambda)$.
  
  \noindent\underline{\bf Construction of $(\sigma_n, \phi_n)$ :}
  Assume we have constructed $n$ measurable functions
  $\sigma_0, \cdots, \sigma_{n-1}$ and $\phi_0, \cdots, \phi_{n-1}$
  satisfying for all $\lambda \in \Lambda$,
  $\sigma_0(\lambda) \geq \cdots \geq \sigma_{n-1}(\lambda)$, and
  $(\phi_0(\lambda), \cdots, \phi_{n-1}(\lambda))$ is an orthonormal
  family where for all $0 \leq i \leq n-1$,
  $\phi_i(\lambda) \in \ker(\Phi(\lambda) -
  \sigma_i(\lambda)\Id_{\cH_0})$. Then, as in the initialization step, the function
  $$
  f : \fundef{\Lambda \times \bar B_{0,1} & \to & \rset_+ \\ (\lambda, x) & \mapsto & \pscal{\Phi(\lambda) x}{x}_{\cH_0} - \sum_{i=1}^{n-1} \sigma_i(\lambda) \abs{\pscal{x}{\phi_i(\lambda)}_{\cH_0}}^2}\;.
  $$
  is measurable in $\lambda$ and continuous in $x$ (for the weak topology) and the correspondence 
  $$
  \varphi : \fundef{\Lambda & \twoheadrightarrow & \bar B_{0,1} \\
  \lambda & \mapsto & \bar B_{0,1} \cap \lspan{\phi_0(\lambda), \cdots, \phi_{n-1}(\lambda)}^\perp} 
$$
is weakly measurable (in the sense of
\cite[Definition~18.1]{hitchiker-infinite-dimension}) because of
\cite[Corollary~18.8 and Lemma~18.2]{hitchiker-infinite-dimension})
and the fact that
$\varphi(\lambda) = \set{x \in \bar B_{0,1}}{\sum_{i=0}^{n-1}
  \abs{\pscal{x}{\phi_i(\lambda)}_{\cH_0}}^2 = 0}$ and has nonempty
compact values (because $\varphi(\lambda)$ is a closed subset of
$\bar B_{0,1}$ for the weak topology hence is compact for this
topology). Hence, as previously, the Measurable Maximum Theorem gives that
$m : \lambda \mapsto \max_{x \in \varphi(\lambda)} f(\lambda,x)$ is
measurable and that there exists a measurable function
$g : \Lambda \to \cH_0$ such that for all $\lambda \in \Lambda$,
$g(\lambda) \in \argmax_{x \in \varphi(\lambda)} f(\lambda,x) $. We
set $\sigma_n = m$ and $\phi_n = g$. Then, from the definitions of
$f, m$ and $g$, we get that $\sigma_n(\lambda) \leq \sigma_{n-1}(\lambda)$ is
the $(n+1)$-th largest eigenvalue of $\Phi(\lambda)$ (because it is
the largest eigenvalue of
$\Phi(\lambda) - \sum_{i=0}^{n-1} \sigma_i(\lambda) \phi_i(\lambda)
\otimes \phi_i(\lambda)$) and that $\phi_n(\lambda)$ is an eigenvector
with eigenvalue $\sigma_n(\lambda)$ and is orthogonal to
$\phi_0, \cdots, \phi_{n-1}$.
\end{proof}
We can now prove \Cref{lem:eigendecomp-tc-povm}.
\begin{proof}[\bf Proof of \Cref{lem:eigendecomp-tc-povm}]
  We provide a proof in the case where $N=\infty$ as the finite
  dimensional case is easier.  Let
  $f\in L^1(\Lambda,\cA, \cS_1^+(\cH_0), \mu)$ be the density of $\nu$
  with respect to $\mu$. We assume without loss of generality that
  $f(\lambda) \in \cS_1(\cH_0)^+$ for all $\lambda \in \hat\lcag$
  (rather than for $\mu$-almost every $\lambda$). Using
  \Cref{thm:measurable-eigendecomp} we can write
\begin{equation}\label{eq:eigendecomposition-specdens}
  f(\lambda) = \sum_{n = 0}^{+\infty} \sigma_n(\lambda) \phi_n(\lambda) \otimes \phi_n(\lambda) \;,
\end{equation}
where $(\sigma_n(\lambda))_{n\in\nset}$ is non-decreasing and
converges to zero and $(\phi_n(\lambda))_{n\in\nset}$
satisfies~\ref{item:lem:eigendecomp-tc-povmii}. Moreover, for all
$\lambda\in\Lambda$,
$\sum_n\sigma_n(\lambda)=\norm{f(\lambda)}_1<\infty$, and we get
Assertions~\ref{item:lem:eigendecomp-tc-povmi}
and~\ref{item:lem:eigendecomp-tc-povmiii}.

It only remains to
prove~\ref{item:lem:eigendecomp-tc-povm1}--\ref{item:lem:eigendecomp-tc-povm3},
which we now proceed to
do. By (\ref{eq:eigendecomposition-specdens}) and the previously proved assertions, we get that for all $n \in \nset$ and all $\lambda \in \Lambda$, $\norm{\phi_n^\adjoint f^{1/2}(\lambda)}_2^2 =  \sigma_n(\lambda) \leq \norm{f(\lambda)}_1$. Hence $\phi_n^\adjoint f^{1/2} \in L^2(\Lambda, \cA, \cS_2(\cH_0, \cset), \nu)$ and \Cref{prop:charac-L2O} gives that $\phi_n^\adjoint \in \mathsf{L}^2(\Lambda, \cA, \cO(\cH_0, \cset), \nu)$ and  for all $n,p\in\nset$,
$$
\pscal{\phi_n^\adjoint}{\phi_p^\adjoint}_{\nu}
=\int \phi_n^\adjoint f \phi_p\;\rmd\mu
= \begin{cases}
  0 & \text{ if $n\neq p$,}\\
  \int\sigma_n\;\rmd\mu & \text{otherwise.} \; , 
\end{cases}
$$
where the last equality comes from (\ref{eq:eigendecomposition-specdens}) and the previously proved assertions.

Similarly, for all $n \in \nset$, $\phi_n \otimes \phi_n f^{1/2} \in L^2(\Lambda, \cA, \cS_2(\cH_0), \nu)$, hence by \Cref{prop:charac-L2O}, we have $\phi_n \otimes \phi_n \in \mathsf{L}^2(\Lambda, \cA, \cO(\cH_0), \nu)$ and for all $n,p\in\nset$,
\begin{align*}
\gramian{\phi_n\otimes\phi_n}{\phi_p\otimes\phi_p}_{\nu}
=\int (\phi_n\otimes\phi_n) f (\phi_p\otimes\phi_p)\;\rmd\mu
=
\begin{cases}
  0 & \text{ if $n\neq p$,}\\
  \int\sigma_n\,(\phi_n\otimes\phi_n)\;\rmd\mu & \text{otherwise,}
\end{cases}
\end{align*}
which proves Assertion~\ref{item:lem:eigendecomp-tc-povm2}. Now
observe that, for all $\lambda\in\Lambda$,
$
\left(\sum_{n=0}^\infty\phi_n(\lambda)\otimes\phi_n(\lambda)\right)f(\lambda)=
f(\lambda)\left(\sum_{n=0}^\infty\phi_n(\lambda)\otimes\phi_n(\lambda)\right)= f(\lambda)
$.
This yields
$
\norm{\sum_{n=0}^\infty\phi_n\otimes\phi_n-\Id_{\cH_0}}_{\nu} =0
$,
and thus Assertion~\ref{item:lem:eigendecomp-tc-povm3} holds, which
concludes the proof.
\end{proof}
A first consequence of \Cref{lem:eigendecomp-tc-povm} is the following.
\begin{remark}
  \label{rem:harmonic-pca-filtering}
Applying \Cref{lem:eigendecomp-tc-povm} to the trace-class \povm\
$\nu_X$, we deduce that
\begin{equation}
  \label{eq:CKL-second}
\hat{X}= \filterspecrep{\left(\sum_{0\leq n < N} \phi_n\otimes\phi_n\right)}(\hat{X}) = \sum_{0 \leq n < N} \filterspecrep{\phi_n\otimes\phi_n}(\hat{X})\;,
\end{equation}
where $(\filterspecrep{\phi_n\otimes\phi_n}(\hat{X}))_{0 \leq n < N}$
are uncorrelated random \cagos's on
$(\hat\lcag,\borel(\hat\lcag),\cH_0)$. In other words,
(\ref{eq:CKL-first}) holds both with $\hat{X}$ in the sum sign or out
of it in the right-hand side. Moreover, for all $n\in\nset$,
 $\filterspecrep{\phi_n\otimes\phi_n}(\hat{X})=\filterspecrep{\phi_n}\circ\filterspecrep{\phi_n^\adjoint}(\hat{X})$
and, by~\ref{item:lem:eigendecomp-tc-povm1} of
\Cref{lem:eigendecomp-tc-povm},
$(\filterspecrep{\phi_n^\adjoint}(\hat{X}))_{0 \leq n < N}$ is a
sequence of uncorrelated $\cset$-valued
\caos\ measures. Hence, interpreting~(\ref{eq:CKL-second}) in the time domain,
we get a decomposition of the process
$X=(X_t)_{t\in\lcag}$ based on a collection of the uncorrelated univariate processes
$(\filterprocess{\phi_n^\adjoint}(X))_{0 \leq n < N}$. 
\end{remark}
To conclude, we prove \Cref{prop:hfpca}. 
\begin{proof}[\bf Proof of \Cref{prop:hfpca}]
  Let
  $
  f_X(\chi)=\sum_{0\leq n<N} \sigma_n(\chi) \,\phi_n(\chi) \otimes \phi_n(\chi)
  $
  denote the density of $\nu_X$ with respect to $\mu$ as given by
  \Cref{lem:eigendecomp-tc-povm}. We have, for all $t\in\lcag$ and $\Theta \in
  \mathsf{L}^2(\hat{\lcag}, \borel(\hat{\lcag}), \cO(\cH_0), \nu_X)$,
  $
  \left[\filterprocess{\Theta}(X)\right]_t = \int \chi(t)\,\Theta(\chi)\;\hat{X}(\rmd\chi)\;,
  $
  and thus by isometric isomorphism between the spectral domain and
  the time domain,
  $$
  \PE[\norm{X_t- [\filterprocess{\Theta}(X)]_t}^2] = \int \norm{(\Id_{\cH_0} - \Theta(\chi))f_X^{1/2}(\chi)}_{2}^2 \, \mu(\rmd \chi)\;.
  $$
  The result is then obtained by observing that, for each
  $\chi\in\hat\lcag$,  the norm in the integral is minimal under the
  constraint  $\rank(\Theta(\chi)) \leq q(\chi)$ for $\Theta(\chi) =
  \sum_{0 \leq n < q(\chi) \wedge N} \phi_n(\chi) \otimes \phi_n(\chi)$.  
\end{proof}

\bibliographystyle{plainnat}
\bibliography{biblio.bib}

\end{document}